\documentclass[reqno]{amsart}
\overfullrule=5pt
\usepackage{xcolor}
\usepackage{amsmath, amssymb}
\usepackage{latexsym}

\usepackage{wasysym}
\usepackage{graphicx}

\topmargin=0pt \oddsidemargin=0pt \evensidemargin=0pt
\textwidth=15cm \textheight=22cm \raggedbottom

\setlength{\parindent}{4mm} \setlength{\parskip}{5mm plus 2mm minus
2mm}

\newtheorem{Theorem}{Theorem}[section]
\newtheorem{Lemma}[Theorem]{Lemma}
\newtheorem{Proposition}[Theorem]{Proposition}
\newtheorem{Corollary}[Theorem]{Corollary}
\newtheorem{Remark}[Theorem]{Remark}

\newtheorem{Definition}[Theorem]{Definition}
\numberwithin{equation}{section}

\newcommand{\la}{\langle}
\newcommand{\ra}{\rangle}

\newcommand{\beq}{\begin{equation}}
\newcommand{\eeq}{\end{equation}}
\newcommand{\bes}{\begin{equation*}}
\newcommand{\ees}{\end{equation*}}

\def\N{{\Bbb N}}
\def\Z{{\Bbb Z}}
\def\R{{\Bbb R}}
\def\T{{\Bbb T}}
\def\C{{\Bbb C}}

\begin{document}

\title[Reducibility of quantum harmonic oscillator on $\R^d$]{Reducibility of quantum harmonic oscillator on $\R^d$ with  differential and quasi-periodic in time potential}
\author{ Zhenguo Liang and Zhiguo Wang}

\address {School of Mathematical Sciences and
Key Lab of Mathematics for Nonlinear Science, Fudan University,
Shanghai 200433, China} \email{zgliang@fudan.edu.cn}

\address {School of Mathematical Sciences,
Soochow University, Suzhou
215006,  China} \email{zgwang@suda.edu.cn}

\thanks{The first author was partially supported by NSFC grants 11371097, 11571249; the second author was partially supported by NSFC grants  11571249, 11671192.}

%\date{put in custom date; delete this line for today's date}
%\maketitle

\date{}

\begin{abstract} We improve the results by Gr\'ebert and Paturel in \cite{GP} and  prove that a linear   Schr\"odinger equation on $\R^d$  with harmonic potential $|x|^2$ and small $t$-quasiperiodic potential as
$$
{\rm i}u_t - \Delta u+|x|^2u+\varepsilon V(\omega t,x)u=0, \ (t,x)\in \R\times\R^d
$$
reduces to an autonomous system for most values of the frequency vector $\omega\in\R^n$. The new point is that  the potential $V(\theta,\cdot )$ is  only in ${\mathcal{C}^{\beta}}(\T^n, \mathcal{H}^{s}(\R^d))$ with $\beta$ large enough. As a consequence any solution of such a linear PDE is almost periodic in time and remains bounded in some suitable Sobolev norms.\\

\noindent \textsc{Keywords}. quantum harmonic oscillator,  finitely differentiable, pure-point spectrum, KAM, \\
\indent\quad\quad\quad\quad  reducibility\\

\noindent \textsc{Mathematics Subject Classification numbers}. 35P05, 37K55, 81Q15
\end{abstract}

\today

\maketitle

\section{Introduction}\label{introduction}

\subsection{State of  Reducibility Problem and Main Results}\label{s1.1}

We consider the following nonautonomous
linear  equation in $\R^d$
\begin{eqnarray}\label{HOeq}
{\rm i}u_t - \Delta u+|x|^2u+\varepsilon V(\omega t,x)u=0, \  u=u(t,x),\ (t,x)\in \R\times\R^d.
\end{eqnarray}
Here $\varepsilon>0$ is a small parameter and the frequency vector $\omega$ of  forced oscillator is regarded as a parameter in $D_0=[0,2\pi]^n\subset \R^n$.
The function $V$ is a real multiplicative potential which  is quasiperiodic in time. Namely, $V$ is a  continuous function of $(\theta,x)\in\T^n\times\R^d$.
We assume $V(z,\cdot)\in {\mathcal{C}^{\beta}}(\R^n, \mathcal{H}^{s}(\R^d))$ which will be denoted in the following(see Definition \ref{Cbetaspace}).\\
\indent As the usual reducibility results we consider the previous equation as a linear non-autonomous equation in the complex Hilbert space $L^2(\R^d)$ and we prove that it reduces to an autonomous system for most values of the frequency vector $\omega$.\\
\indent Similar as Gr\'ebert and Paturel \cite{GP}, we introduce some notations. Let $T:=-\Delta+|x|^2=-\Delta+x_1^2+\cdots+x_d^2$ be the d-dimensional quantum harmonic oscillator. Its spectrum is the sum of $d$ copies of   odd integers, i.e., the spectrum of $T$ equals to
$\widehat{\mathcal{E}}:=\{d,d+2,d+4,\cdots\}$.
For $j\in\widehat{\mathcal{E}}$, we denote the associated eigenspace by $ {E}_j$ whose dimension is
$$d_j:=card\{(i_1,\cdots,i_d)\in(2\N-1)^d\ |\ i_1+\cdots+i_d=j\}\leq j^{d-1}.$$
We denote $\{\Phi_{j,l},\ l=1,2,\cdots,d_j\}$, the basis of $ {E}_j$ obtained by $d-$tensor product of Hermite functions:
$\Phi_{j,l}=\varphi_{i_1}\otimes\cdots\otimes\varphi_{i_d}$
for some choice of ${i_1}+\cdots+{i_d}=j.$ Then setting
$$\mathcal{E}:=\{(j,l)\in\widehat{\mathcal{E}}\times\N\ |\ l=1,\cdots,d_j\}.$$
$(\Phi_{a})_{a\in\mathcal{E}} $ is a basis of $L^2(\R^d)$ and denote
$w_{j,l}:=j\ {\rm for}\ (j,l)\in\mathcal{E}$.
We have
\begin{equation}\label{eigenfunction1}
T\Phi_{a}=w_a\Phi_{a},\ a\in\mathcal{E}.
\end{equation}
We define in $\mathcal{E}$ an equivalence relation
$a\sim b\Leftrightarrow w_a=w_b$
and denote by $[a]$ the equivalence class associated to $a\in\mathcal{E}.$ Note that  $card\ [a]\leq w_a^{d-1}.$\\
\indent Let $s\geq 0$ be an integer we define
$$ \mathcal{H}^s:=\{u\in \mathcal{H}^s(\R^d,\C)\ |\  x\mapsto x^{\alpha_1}\partial_x^{\alpha_2}u \in L^{2}(\R^d)\ {\rm for\ any}\ \alpha_1,\alpha_2\in\N^d,\     0\leq|\alpha_1|+|\alpha_2|\leq
s \}.$$
We need to point it out that $\mathcal{H}^{s}$ is the form domain of $T^s$ and the domain of $T^{\frac{s}{2}}$ and  this allows us to extend the definition of $\mathcal{H}^{s}$ to any nonnegative real values of $s$(see Delort \cite{Del14}). \\
\indent To a function $u\in\mathcal{H}^s$ we associate the sequence $\xi$ of its Hermite coefficients by the formula $u(x)=\sum_{a\in\mathcal{E}}\xi_a\Phi_{a}(x).$
Then we define
$\ell_{s}^2:=\{(\xi)_{a\in\mathcal{E}}\ |\  \sum_{a\in\mathcal{E}}w_a^s|\xi_a|^2 <\infty\}$,
and for $s\geq0$,
$u\in\mathcal{H}^s\Leftrightarrow\xi\in\ell_{s}^2$.
Then we endow both spaces with the norm
$\|u\|_s=\|\xi\|_s=(\sum_{a\in\mathcal{E}}w_a^s|\xi_a|^2)^{\frac12}$.
If $s$ is a nonnegative integer, we will use the fact that the norm on $\mathcal{H}^s$ are equivalently defined as $\|T^{\frac{s}{2}}f\|_{L^{2}(\R^d)}$ and $\sum\limits_{0\leq|\alpha_1|+|\alpha_2|\leq
s}\|x^{\alpha_1}\partial_x^{\alpha_2}f\|_{ L^{2}(\R^d)}. $\\
To introduce the main result we introduce some notations and definitions.
\begin{Definition}\label{Cbetaspace}
Assume that $X$ is a complex Banach space with the norm $\|\cdot \|_{X}$. Let $\mathcal{C}^{b}(\R^n,X)$, $0<b<1$, be the space of
H\"older continuous functions $f : \R^n\rightarrow X$ with the norm
$$\|f\|_{\mathcal{C}^{b}(\R^n, X)} : = \sup\limits_{0<|z_1-z_2|<2\pi}\frac{\|f(z_1)-f(z_2)\|_{X}}{{|z_1-z_2|^{b}}}+\sup\limits_{z\in \R^n} \|f(z)\|_{X}.$$
If $b=0$, then $\|f\|_{\mathcal{C}^{b}(\R^n, X)}$ denotes the sup-norm. For $\beta=[\beta]+b$ with $0\leq b<1$, we denote
by ${\mathcal{C}^{\beta}}(\R^n, X)$ the space of functions $f: \R^n\rightarrow X$ with H\"older continuous partial derivatives and $\partial^{\alpha} f\in \mathcal{C}^{b}(\R^n, X_{\alpha})$ for all
multi - indices $\alpha=(\alpha_1, \cdots, \alpha_n)\in \N^n$, where $|\alpha| : = |\alpha_1|+\cdots+|\alpha_n| \leq \beta$ and $X_{\alpha}=\mathfrak{L}(\prod\limits_{i=1}^{|\alpha|}Y_i, X)$ with the standard norm and $Y_i : =\R^n$, $i=1, \cdots, |\alpha|$.
We define the norm
$
\|f\|_{\mathcal{C}^{\beta}(\R^n, X)} := \sum\limits_{|\alpha|\leq \beta}\|\partial^{\alpha}f\|_{\mathcal{C}^{b}(\R^n, X_{\alpha})}.
$
If a function $f$ has a finite norm $\|f\|_{\mathcal{C}^{\beta}(\R^n, X)}$, then we call $f\in {\mathcal{C}^{\beta}}(\R^n, X)$. \\
\indent Denote by $\mathcal{C}^{\beta}(\T^n, X)$ the space of all functions $f\in \mathcal{C}^{\beta}(\R^n, X)$ that are of period $2\pi$ in all variables. We define
$\|f\|_{\mathcal{C}^{\beta}(\T^n, X)} :=  \|f\|_{\mathcal{C}^{\beta}(\R^n, X)} $.
\end{Definition}

{\begin{Definition}\label{def1.1}
A real potential $V:\T^n\times\R^d\ni(\theta,x)\mapsto V(\theta,x)$ is called $(s,\beta)-$admissible if $V(\theta, x)\in \mathcal{C}^{\beta}(\T^n, \mathcal{H}^s(\R^d))$
with a finite norm, namely, $\|V(\theta, \cdot )\|_{\mathcal{C}^{\beta}(\T^n, \mathcal{H}^{s}(\R^d))}\leq C$,
where
\begin{equation*}\left\{
\begin{array}{cc}
s\geq0,& d=1,\\
s>2(d-2), & d\geq2,
\end{array}\right.
\end{equation*}
and the constant $C$ depends on $s, \beta, n$ and $d$.
\end{Definition}}
Set   $\gamma_1= n+d+2,\
\gamma_2=\frac{\alpha }{4+d+2\alpha }$(depending only on $s$ and $d$) and $\alpha$ given by (\ref{alpha}), we have
{\begin{Theorem}\label{quantumth}
Assume that the potential $V: \T^n\times \R^d\ni (\theta,x)\mapsto\R$ is $(s,\beta)-$admissible.  There exists $\delta$ satisfying  $0<\delta<\frac{\gamma_2}{24}$ and $\varepsilon_*(\beta,n,s,d,\delta)>0$, if   $0<\varepsilon<\varepsilon_*$,
$\beta>\max\{9(2+\frac{d}{\alpha})\frac{\gamma_1}{\gamma_2-24\delta},\ 9n,\ 12(d+1)\}$,  then there exists $D_\varepsilon\subset D_0$ with ${\rm Meas}(D_0\setminus D_\varepsilon)\leq c(\beta,n,d,s,\delta)\varepsilon^{\frac{3\delta}{2+\frac{d}{\alpha}}}$, such that for
 all $\omega\in D_\varepsilon$ the linear Schr\"odinger equation  (\ref{HOeq})
reduces to a linear autonomous equation in the space $\mathcal{H}^{s'}$ with $1\leq s'\leq \max\{s,1\}$.\\
\indent More precisely, for $\omega\in D_\varepsilon$, there exist a linear isomorphism $\Psi_\omega^\infty(\theta)\in\mathfrak{L}(\mathcal{H}^{s'})$ for $0\leq s'\leq s$, unitary on $L^2(R^d)$, where  $\Psi_\omega^\infty(\theta)\in \mathcal{C}^\mu(\T^n,\mathfrak{L}(\mathcal{H}^{s'}))$ for $0\leq s'\leq s$ with $\mu\notin\Z$ and $\mu\leq\frac{2}{9}\beta,$ and a bounded Hermitian operator $W=W_{\omega,\varepsilon}\in\mathfrak{L}(\mathcal{H}^{s'})$ such that $t\mapsto u(t,\cdot)\in\mathcal{H}^{s'}$ with $1\leq s'\leq \max\{s,1\}$  satisfies (\ref{HOeq}) if and only if $t\mapsto v(t,\cdot)= \Psi_\omega^\infty(\omega t)u(t,\cdot) $ satisfies the autonomous equation
\begin{equation*}\label{reducedeq}
\mathrm{i}\partial_t v -\Delta v+|x|^2 v+\varepsilon W(v)=0.
\end{equation*}
Furthermore, for $0\leq s'\leq s$,
$$\|\Psi_\omega^{\infty}(\theta)-id\|_{\mathcal{C}^{\mu}(\T^n, \mathfrak{L}(\mathcal{H}^{s'},\mathcal{H}^{s'+2\alpha}))}\leq C \varepsilon^{\frac{3}{2\beta}(\frac{2}{9}\beta-\mu)},\  (\theta,\omega)\in\T^n\times D_\varepsilon.$$
On the other hand, the infinite matrix
$(W_{a}^b)_{a,b\in\mathcal{E}}$ of the operator $W$ written in the Hermite basis($W_{a}^b=\int_{\R^d} \Phi_a W(\Phi_b)dx$) is block diagonal, i.e.
$W_{a}^b=0\ {\rm if}\  w_a\neq w_b.$
Denote $(\la V \ra_{a}^b)_{a,b\in\mathcal{E}}$ be the corresponding  infinite matrix of the operator $\la V \ra(x)=\frac{1}{(2\pi)^n}\int_{\T^n}V(\theta,x)d\theta$ written in the Hermite basis, we have
$$\|(W_{a}^b)_{a,b\in\mathcal{E}}-\Pi((\la V \ra_{a}^b)_{a,b\in\mathcal{E}})\|_{\mathfrak{L}(\ell_{s'}^2)}\leq  c\varepsilon^{\frac12}$$
 for $0\leq s'\leq s$, where $\Pi$ is the projection on the diagonal blocks.
\end{Theorem}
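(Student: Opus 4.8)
The plan is to combine an analytic smoothing of the potential with a KAM reducibility iteration carried out in complex strips of shrinking width; the obstruction to running the classical analytic scheme of Eliasson--Kuksin type directly is precisely that $V(\theta,\cdot)$, being only $\mathcal{C}^\beta$ in the angle, admits no analytic extension to a complex strip. First I would invoke a Jackson--Moser--Zehnder type approximation theorem: for a sequence $\sigma_j=\sigma_0 2^{-j}\downarrow 0$ one produces real potentials $V_j(\theta,x)$, analytic in $\theta$ on the strip of width $\sigma_j$ with values in $\mathcal{H}^s(\R^d)$, such that $\|V_0\|$ is controlled by $\|V\|_{\mathcal{C}^\beta(\T^n,\mathcal{H}^s)}$, such that $\|V_{j+1}-V_j\|_{\sigma_{j+1}}\lesssim \sigma_j^{\beta}\,\|V\|_{\mathcal{C}^\beta}$, and such that analogous bounds hold for the first few angle derivatives. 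Writing the perturbation as a telescoping sum of the analytic pieces $\varepsilon(V_{j+1}-V_j)$, each living on a thinner strip than its predecessor, one removes them one at a time; since $\sigma_j^\beta\downarrow 0$ super-exponentially, each step carries enough extra smallness to offset the loss of analyticity, provided $\beta$ exceeds the thresholds stated.

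\textbf{The KAM step.} At stage $j$ I would look for a $t$-quasiperiodic, unitary (on $L^2$) change of variables $u\mapsto e^{\mathrm i S_j(\omega t)}u$ conjugating the current operator $T+\varepsilon P_j(\omega t,x)$ into $T+\varepsilon N_j(x)+\varepsilon P_{j+1}(\omega t,x)$, where $N_j$ commutes with the spectral projections of $T$ onto the eigenspaces $E_j$ (and is retained as part of the unperturbed operator at the next step) while $P_{j+1}$ is much smaller than $P_j$. The generator $S_j$ solves the homological equation $\mathrm i\,\omega\cdot\partial_\theta S_j+[T,S_j]=\Pi^\perp P_j^{(\le K_j)}$; in the Hermite basis $(\Phi_a)_{a\in\mathcal{E}}$ this reads
\[
\mathrm i\big(\ko+w_a-w_b\big)\,(S_j)_a^b(k)=\big(\widehat{P_j}\big)_a^b(k),\qquad w_a\neq w_b\ \text{or}\ k\neq 0 ,
\]
with $|k|\le K_j$, and the denominators are handled by the first and second Melnikov conditions. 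Since $w_a,w_b\in\widehat{\mathcal{E}}$ forces $w_a-w_b\in 2\Z$, the second condition takes the simple form $|\ko+2m|>\gamma(1+|k|)^{-\tau}$ for $(k,m)\in\Z^n\times\Z$, $|k|\le K_j$, $(k,m)\neq 0$ --- a genuine Diophantine condition on $\omega$ alone, to be imposed once and for all, so that one never has to diagonalize inside the degenerate blocks. The solvability of the homological equation in the right operator classes rests on the smoothing property of multiplication by an $\mathcal{H}^s$ function: using standard bounds on products of Hermite functions, such an operator maps $\mathcal{H}^{s'}$ into $\mathcal{H}^{s'+2\alpha}$ with $\alpha$ as in (\ref{alpha}), the constraint $s>2(d-2)$ of Definition~\ref{def1.1} being exactly what makes this uniform in the block sizes $d_j\le w_a^{d-1}$; this $2\alpha$-gain is preserved along the iteration and is what keeps $\Psi^\infty_\omega-\mathrm{id}$ bounded from $\mathcal{H}^{s'}$ to $\mathcal{H}^{s'+2\alpha}$.

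\textbf{Convergence, regularity of $\Psi^\infty$ and the measure estimate.} Composing, $\Psi^\infty_\omega(\theta)=\lim_{J\to\infty}e^{\mathrm i S_0(\theta)}\cdots e^{\mathrm i S_J(\theta)}$ converges; but since $S_j$ is only defined on the strip of width $\sigma_{j+1}$, the limit is no longer analytic and I would have to estimate its H\"older class directly. Here one uses that an analytic function on a strip of width $\sigma$ with $C^0$-norm $\lesssim\sigma^{\beta-\mu}$ contributes a bounded amount to the $\mathcal{C}^\mu$-norm; summing over steps and optimizing $\sigma_j$ against $\varepsilon_j$ yields convergence in $\mathcal{C}^\mu(\T^n,\mathfrak{L}(\mathcal{H}^{s'}))$ for every $\mu\notin\Z$ with $\mu\le\tfrac29\beta$, together with $\|\Psi^\infty_\omega-\mathrm{id}\|_{\mathcal{C}^\mu(\T^n,\mathfrak{L}(\mathcal{H}^{s'},\mathcal{H}^{s'+2\alpha}))}\lesssim\varepsilon^{\frac{3}{2\beta}(\frac29\beta-\mu)}$. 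The excised set $D_0\setminus D_\varepsilon$ is the union over $j$ of the frequencies where the first or second Melnikov conditions fail at scale $K_j$ with parameter $\gamma=\gamma(\varepsilon)$; taking $\tau$ comparable to $\gamma_1=n+d+2$, choosing $\gamma\sim\varepsilon^{\delta'}$ with $\delta'$ a fixed multiple of $\delta$, and summing the resulting geometric series gives ${\rm Meas}(D_0\setminus D_\varepsilon)\lesssim\varepsilon^{\,3\delta/(2+d/\alpha)}$.

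\textbf{The limiting normal form and the main obstacle.} On $D_\varepsilon$ the limit operator is $T+\varepsilon W$ with $W$ Hermitian, bounded on $\mathcal{H}^{s'}$ and block diagonal ($W_a^b=0$ when $w_a\neq w_b$); setting $v=\Psi^\infty_\omega(\omega t)u$ then turns (\ref{HOeq}) into the autonomous reduced equation. Since the only part of the perturbation contributing to the normal form at order $\varepsilon$ is the projected time-average $\Pi\la V\ra$ --- every correction produced by the successive homological equations being of relative size $O(\varepsilon)$, hence of size $O(\varepsilon\cdot\varepsilon^{1/2})$ after balancing $\varepsilon_j$ with $K_j$ --- one obtains $\|(W_a^b)-\Pi((\la V\ra_a^b))\|_{\mathfrak{L}(\ell^2_{s'})}\lesssim\varepsilon^{1/2}$. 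The hard part will be the simultaneous bookkeeping of the three scales, namely the analyticity widths $\sigma_j$, the smallnesses $\varepsilon_j$ of the successive perturbations, and the truncation orders $K_j$: one must lose enough regularity to gain the smallness needed for convergence (which forces $\beta$ large), yet keep enough regularity at the end for $\Psi^\infty_\omega$ to remain a bounded operator with $\mu>0$, while the removed frequency set stays polynomially small in $\varepsilon$; controlling the block (second Melnikov) small divisors uniformly in the multiplicities $d_j$, which enter the off-diagonal operator norms through the Hermite estimates, is the other delicate point and is what dictates the restriction on $s$ in Definition~\ref{def1.1}.
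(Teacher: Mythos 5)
Your overall architecture — Jackson--Moser--Zehnder smoothing of $V$ in $\theta$, a telescoping decomposition $\varepsilon P=\varepsilon P^{(0)}+\sum_\nu\varepsilon(P^{(\nu+1)}-P^{(\nu)})$ absorbed one analytic piece per KAM step on shrinking strips, and a Bernstein-type inverse approximation lemma to recover the $\mathcal{C}^\mu$ regularity of $\Psi^\infty_\omega-\mathrm{id}$ — is exactly the paper's strategy (Lemmas \ref{smoothing}, \ref{smoothinginverse}, \ref{L4.5} and the scheme of Subsection \ref{KAMstep}). The discrepancies in bookkeeping ($\sigma_j=\sigma_02^{-j}$ versus the paper's $\sigma_{\nu+1}=\sigma_\nu^{3/2}$ tied to $\epsilon_{\nu+1}=\epsilon_\nu^{3/2}$) are harmless.

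There is, however, a genuine gap in your treatment of the homological equation and the small divisors. You propose to solve $\mathrm i\,\omega\cdot\partial_\theta S_j+[T,S_j]=\Pi^\perp P_j^{(\le K_j)}$ with the \emph{original} operator $T$, so that the denominators are $\ko+2m$ with $m\in\Z$, and to impose a single integer Diophantine condition on $\omega$ ``once and for all'', never diagonalizing inside the degenerate blocks. This is inconsistent with your own (correct) statement that the block-diagonal normal form $N_j$ is ``retained as part of the unperturbed operator at the next step''. If $N_j$ is retained, the homological equation at step $j+1$ must read $\omega\cdot\nabla_\theta F-\mathrm i[N_j,F]=\widetilde N-Q+R$ with $N_j=N_0+\varepsilon\sum_{i\le j}\widetilde N_i$; if instead you keep only $[T,S]$, the error term contains $[N_j-N_0,S_{j+1}]$, which is of size $O(\epsilon_0\,\epsilon_j^{13/24})$ — linearly, not quadratically, small in $\epsilon_j$ — and the iteration does not close. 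Once the corrections are moved into the denominators, these become $\ko+\mu_a-\mu_b$ where $\mu_a,\mu_b$ are eigenvalues of the perturbed Hermitian blocks $(N_j)_{[a]}$ of unbounded dimension $\le w_a^{d-1}$; they are no longer integers, they depend on $\omega$ only in a $\mathcal{C}^1$ Whitney sense, and they drift at each step, so the resonant set must be re-excised at \emph{every} step (this is Proposition \ref{pro4.1}, and $D_\varepsilon=\bigcap_\nu D_\nu$). This is precisely where the exponents $\gamma_1=n+d+2$ and $\gamma_2=\frac{\alpha}{4+d+2\alpha}<\alpha_2=1$ in the measure estimate come from: the degradation from $\alpha_2$ to $\gamma_2$ quantifies the control of eigenvalues of perturbed blocks via the decay $\|(N_j-N_0)_{[a]}\|\lesssim\varepsilon w_a^{-2\alpha}$ and the block sizes $w_a^{d-1}$. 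Your ``once and for all'' integer condition would yield neither the stated exponent $\varepsilon^{3\delta/(2+d/\alpha)}$ nor, more importantly, a convergent scheme. Relatedly, the $2\alpha$-smoothing of $\Psi^\infty_\omega-\mathrm{id}$ is not a property of the multiplication operator $P\in\mathcal{M}_{s,\alpha}$ itself (which only maps $\ell^2_t\to\ell^2_{-t}$); it is gained by the generator $F\in\mathcal{M}^+_{s,\alpha}$ through the extra factor $(1+|w_a-w_b|)^{-1}$ produced by dividing by the small denominators (Lemma \ref{daishu01} iv)).
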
}

\begin{Remark} Comparing with Theorem 1.2  in \cite{GP} we prove the reducibility theorem in the space $\mathcal{H}^{s'}$ with $1\leq s'\leq \max\{s,1\}$,  not in the energy space $ \mathcal{H}^{1}$. See Lemma \ref{convergence04} for details.
\end{Remark}

As a consequence of Theorem \ref{quantumth}, we prove the following corollary concerning the solutions of (\ref{HOeq}).
\begin{Corollary}\label{coro01}
Assume all the assumptions in Theorem \ref{quantumth} hold. Let $1\leq s'\leq \max\{s,1\}$ and let  $u_0\in\mathcal{H}^{s'}$, then there exists $\varepsilon_*>0$ such that for $0<\varepsilon<\varepsilon_*$ and $\omega\in D_\varepsilon$(in Theorem \ref{quantumth}), there exists a unique solution $u\in \mathcal{C}(\R,\mathcal{H}^{s'})$ of (\ref{HOeq}) such that $u(0)=u_0$. Moreover, u is almost periodic in time and satisfies\\
\begin{equation*}\label{utnorm}
(1-c\varepsilon) \|u_0\|_{\mathcal{H}^{s'}}\leq \|u(t)\|_{\mathcal{H}^{s'}}\leq(1+c\varepsilon )\|u_0\|_{\mathcal{H}^{s'}},\ \forall \ t\in\R
\end{equation*}
with some $c=c(s',s,d).$
\end{Corollary}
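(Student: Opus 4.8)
The plan is to transport the elementary theory of the autonomous reduced equation back through the conjugation $\Psi_\omega^\infty(\omega t)$ supplied by Theorem \ref{quantumth}. First I would set up the autonomous problem: the reduced equation $\mathrm{i}\partial_t v = (T + \varepsilon W)v$ is governed by a self-adjoint operator, since $T=-\Delta+|x|^2$ is self-adjoint on $\mathcal{H}^2$ with compact resolvent and $W=W_{\omega,\varepsilon}$ is a bounded Hermitian operator on each $\ell^2_{s'}$ (indeed block-diagonal, hence commuting with the spectral projections of $T$). Therefore $e^{-\mathrm{i}t(T+\varepsilon W)}$ is a strongly continuous group of unitaries on $L^2(\R^d)$ which also preserves each $\mathcal{H}^{s'}$ for $0\le s'\le s$ with uniformly bounded norm — this last point uses the block-diagonal structure of $W$, so that $T+\varepsilon W$ maps $E_j$ into itself and one controls $\|(T+\varepsilon W)^{s'/2}v\|_{L^2}$ by $\|T^{s'/2}v\|_{L^2}$ up to a constant $1+c\varepsilon$. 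Given $u_0\in\mathcal{H}^{s'}$, set $v_0:=\Psi_\omega^\infty(0)u_0$, which lies in $\mathcal{H}^{s'}$ since $\Psi_\omega^\infty(\theta)\in\mathfrak{L}(\mathcal{H}^{s'})$, define $v(t):=e^{-\mathrm{i}t(T+\varepsilon W)}v_0$, and then set $u(t):=\Psi_\omega^\infty(\omega t)^{-1}v(t)$.

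Next I would verify that this $u$ is the desired solution. Continuity $u\in\mathcal{C}(\R,\mathcal{H}^{s'})$ follows from the continuity of $t\mapsto v(t)$ in $\mathcal{H}^{s'}$, the Hölder (hence continuous) dependence $\theta\mapsto\Psi_\omega^\infty(\theta)$ in $\mathcal{C}^\mu(\T^n,\mathfrak{L}(\mathcal{H}^{s'}))$, and continuity of inversion on the group of isomorphisms. That $u$ solves \eqref{HOeq} with $u(0)=u_0$ is precisely the equivalence asserted in Theorem \ref{quantumth}: $t\mapsto v(t)=\Psi_\omega^\infty(\omega t)u(t)$ satisfies the autonomous equation iff $t\mapsto u(t)$ satisfies \eqref{HOeq}; here the regularity threshold $1\le s'\le\max\{s,1\}$ is exactly the range in which Theorem \ref{quantumth} establishes the reduction, and one should note $v(t)$ is a genuine (not merely mild) solution because $v_0$ can first be taken in $\mathcal{H}^{s'+2}$ and the general case handled by density together with the uniform bounds. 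Uniqueness follows the same way: any solution $u$ of \eqref{HOeq} in $\mathcal{C}(\R,\mathcal{H}^{s'})$ with $u(0)=u_0$ gives, via $v=\Psi_\omega^\infty(\omega t)u$, a solution of the autonomous equation with data $v_0$, and the autonomous Cauchy problem for a self-adjoint generator has a unique solution; applying $\Psi_\omega^\infty(\omega t)^{-1}$ recovers $u$.

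For the Sobolev bounds and almost periodicity I would argue as follows. Because $e^{-\mathrm{i}t(T+\varepsilon W)}$ commutes with the projection onto each eigenspace $E_j$ and $T+\varepsilon W$ restricted to $E_j$ is a fixed Hermitian matrix, the norm $\|v(t)\|_{s'}$ is time-independent in a norm equivalent to $\|\cdot\|_{s'}$ up to a factor $1+c\varepsilon$; more precisely $\|(T+\varepsilon W)^{s'/2}v(t)\|_{L^2}=\|(T+\varepsilon W)^{s'/2}v_0\|_{L^2}$ exactly, and $(1-c\varepsilon)\|\cdot\|_{s'}\le\|(T+\varepsilon W)^{s'/2}\cdot\|_{L^2}\le(1+c\varepsilon)\|\cdot\|_{s'}$. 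Combining with the fact that $\Psi_\omega^\infty(\theta)$ is unitary on $L^2$ and, from the estimate $\|\Psi_\omega^\infty(\theta)-\mathrm{id}\|_{\mathcal{C}^\mu(\T^n,\mathfrak{L}(\mathcal{H}^{s'},\mathcal{H}^{s'+2\alpha}))}\le C\varepsilon^{\frac{3}{2\beta}(\frac29\beta-\mu)}$, close to the identity on $\mathcal{H}^{s'}$ (so $\|\Psi_\omega^\infty(\theta)^{\pm1}w\|_{s'}=(1+O(\varepsilon))\|w\|_{s'}$ uniformly in $\theta$), one gets $(1-c\varepsilon)\|u_0\|_{s'}\le\|u(t)\|_{s'}\le(1+c\varepsilon)\|u_0\|_{s'}$ after absorbing constants. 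Almost periodicity: $v(t)=\sum_j e^{-\mathrm{i}t\Lambda_j}P_j v_0$ where $\Lambda_j$ is the Hermitian matrix $(T+\varepsilon W)|_{E_j}$, so $v(\cdot)$ is a uniform limit of trigonometric polynomials in $\mathcal{H}^{s'}$, hence Bohr almost periodic; pulling back by the quasi-periodic (hence almost periodic) family $t\mapsto\Psi_\omega^\infty(\omega t)^{-1}$ keeps $u(\cdot)$ almost periodic, since the product of a bounded almost periodic curve with a bounded-operator-valued quasi-periodic curve is almost periodic.

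The main obstacle I expect is the honest justification that $e^{-\mathrm{i}t(T+\varepsilon W)}$ is bounded on $\mathcal{H}^{s'}$ uniformly in $t$ with the sharp constant $1+c\varepsilon$ — i.e. controlling the $s'$-th order Sobolev norm under the perturbed flow — together with the mild-versus-strong solution bookkeeping that lets the equivalence in Theorem \ref{quantumth} be applied to genuine solutions. Both are handled by the block-diagonal structure of $W$ (which makes everything diagonal in the $j$-index and reduces the question to finite-dimensional unitary matrices) plus a density argument, but this is the place where one must be careful.
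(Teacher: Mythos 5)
Your proposal is correct and follows essentially the same route the paper intends: the paper omits the proof (referring to \cite{GP}), but its explicit solution formula $\xi(t)=\overline{M}_\omega(\omega t)e^{-\mathrm{i}t\overline{N}_\infty}M^T_\omega(0)\xi(0)$ in the proof of Theorem \ref{quantumth} is exactly your $u(t)=\Psi_\omega^\infty(\omega t)^{-1}e^{-\mathrm{i}t(T+\varepsilon W)}\Psi_\omega^\infty(0)u_0$, with the block-diagonal Hermitian normal form giving an isometric flow on $\mathcal{H}^{s'}$, the closeness of $\Psi_\omega^\infty$ to the identity giving the $(1\pm c\varepsilon)$ bounds, and almost periodicity obtained from the block-wise trigonometric flow composed with the quasiperiodic conjugation. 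Your attention to the uniform $\mathcal{H}^{s'}$ bound and the mild-versus-strong solution bookkeeping is exactly the right place to be careful, and your block-diagonal argument handles it.
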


Consider on $L^2(\T^n)\otimes L^2(\R^d)$ the Floquet Hamiltonian operator
\begin{equation*}\label{floq}
K:=-\mathrm{i}\sum_{k=1}^n \omega_k\frac{\partial}{\partial\theta_k}-\Delta+|x|^2+\varepsilon V(\theta,x),
\end{equation*}
we have
\begin{Corollary}\label{coro02}
Assume all the assumptions in Theorem \ref{quantumth} hold.  Then   there exists $\varepsilon_*>0$ such that for $0<\varepsilon<\varepsilon_*$ and $\omega\in D_\varepsilon$, the spectrum of the Floquet operator $K$ is pure point.
\end{Corollary}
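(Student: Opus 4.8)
The plan is to lift the reducing transformation $\Psi_\omega^\infty(\theta)$ from Theorem \ref{quantumth} to a unitary operator on the extended phase space $L^2(\T^n)\otimes L^2(\R^d)$ and to check that it conjugates $K$ to the Floquet operator of the reduced \emph{autonomous} equation, whose spectrum is manifestly pure point. Throughout fix $\omega\in D_\varepsilon$ and write $T=-\Delta+|x|^2$, $K_0=-\mathrm i\sum_{k=1}^n\omega_k\frac{\partial}{\partial\theta_k}+T$, so that $K=K_0+\varepsilon V$. First I would recall that $K$ is self-adjoint on $L^2(\T^n\times\R^d)$: multiplication by $V(\theta,\cdot)\in\mathcal H^{s}(\R^d)$ is, uniformly in $\theta$, an infinitesimally $T$-form-bounded perturbation under the admissibility hypothesis (the same Sobolev embeddings that make Theorem \ref{quantumth} work), so $T+\varepsilon V(\theta)$ is self-adjoint for each $\theta$ with form domain $\mathcal H^1$; the associated unitary propagator of (\ref{HOeq}) then yields, via the standard Howland--Yajima construction, a self-adjoint realization of $K$.

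Next I would introduce the unitary lift $\mathcal U=\mathcal U_\omega$ of $\Psi_\omega^\infty$, $(\mathcal U\phi)(\theta,\cdot):=\Psi_\omega^\infty(\theta)\,\phi(\theta,\cdot)$. Since $\Psi_\omega^\infty(\theta)$ is unitary on $L^2(\R^d)$ for each $\theta$, $\mathcal U$ is unitary on $L^2(\T^n\times\R^d)$; and since $\Psi_\omega^\infty\in\mathcal C^\mu(\T^n,\mathfrak L(\mathcal H^{s'}))$ with $\mu>1$ for $0\le s'\le s$ (Theorem \ref{quantumth}), differentiation in $\theta$ is compatible with $\mathcal U$, so $\mathcal U$ maps a suitable core of $K$ into the domain of
\[
K_\infty:=-\mathrm i\sum_{k=1}^n\omega_k\frac{\partial}{\partial\theta_k}-\Delta+|x|^2+\varepsilon W .
\]
Since the autonomous equation in Theorem \ref{quantumth} is obtained from (\ref{HOeq}) exactly by the substitution $v(t,\cdot)=\Psi_\omega^\infty(\omega t)u(t,\cdot)$, the corresponding intertwining of Floquet generators reads $\mathcal U\,K\,\mathcal U^{-1}=K_\infty$ on that core: the term coming from $-\mathrm i\sum_k\omega_k\partial_{\theta_k}$ is handled by differentiating $\Psi_\omega^\infty(\Psi_\omega^\infty)^{-1}=\mathrm{id}$, and the remaining terms reproduce $T+\varepsilon W$ by the reducibility identity. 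As the core is dense in the graph norm, $\mathcal U K\mathcal U^{-1}=K_\infty$ as self-adjoint operators.

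It then suffices to see that $K_\infty$ has pure point spectrum, which is immediate: $K_\infty=\overline{A\otimes\mathrm{Id}+\mathrm{Id}\otimes B}$, where $A:=-\mathrm i\sum_{k=1}^n\omega_k\frac{\partial}{\partial\theta_k}$ is self-adjoint on $L^2(\T^n)$ and $B:=T+\varepsilon W$ is self-adjoint on $L^2(\R^d)$, and the two commute. Now $A$ has the orthonormal eigenbasis $\{(2\pi)^{-n/2}e^{\mathrm i\la k,\theta\ra}\}_{k\in\Z^n}$ with eigenvalues $\ko$. By Theorem \ref{quantumth}, $W$ is bounded, Hermitian and block diagonal, hence leaves each finite-dimensional eigenspace $E_j$ ($j\in\widehat{\mathcal E}$) of $T$ invariant, acting there as the Hermitian matrix $(W_a^b)_{w_a=w_b=j}$; thus $B|_{E_j}=j\,\mathrm{Id}+\varepsilon W|_{E_j}$ is diagonalized by an orthonormal basis $(\psi_{j,l})_{1\le l\le d_j}$ of $E_j$ with real eigenvalues $j+\varepsilon\mu_{j,l}$, $|\mu_{j,l}|\le\|W\|$. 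Collecting these over all $j$ produces an orthonormal basis of $L^2(\R^d)$ of eigenvectors of $B$, so $\{e^{\mathrm i\la k,\theta\ra}\psi_{j,l}\}_{k,j,l}$ is an orthonormal basis of $L^2(\T^n\times\R^d)$ consisting of eigenvectors of $K_\infty$ (with eigenvalues $\ko+j+\varepsilon\mu_{j,l}$). Hence $K_\infty$---and therefore, by the unitary equivalence $K=\mathcal U^{-1}K_\infty\mathcal U$, also $K$---has pure point spectrum.

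I expect the only genuinely delicate step to be the rigorous passage from the reducibility of Theorem \ref{quantumth} to the operator identity $\mathcal U K\mathcal U^{-1}=K_\infty$ on a core: one must verify that $\mathcal U$ preserves enough of $\mathrm{Dom}(K)$ and that the identity propagates to the closures, which is precisely where the $\mathcal C^\mu$-regularity ($\mu>1$) of $\Psi_\omega^\infty$ in $\theta$ and its boundedness on the Sobolev scale $\mathcal H^{s'}$ (both from Theorem \ref{quantumth}) are used, together with the self-adjointness of $K$. Everything downstream---the pure point structure---is then automatic from the block-diagonal form of $W$.
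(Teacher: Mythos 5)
Your proposal is correct and follows exactly the argument the paper has in mind: the paper omits the proof and refers to \cite{GP}, where the Floquet operator $K$ is likewise conjugated, via the lift of $\Psi_\omega^\infty(\theta)$ to $L^2(\T^n)\otimes L^2(\R^d)$, to the Floquet operator of the reduced autonomous system, which is pure point because $-\mathrm i\omega\cdot\partial_\theta$ and $T+\varepsilon W$ commute and each admits an orthonormal eigenbasis ($W$ being bounded, Hermitian and block diagonal with finite-dimensional blocks $E_j$). Your verification that $\mu>1$ (indeed $\mu\le\frac29\beta$ with $\beta\ge 12(d+1)$) justifies the domain/core manipulations, so the argument is complete.
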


\subsection{Related results.}
The equations (\ref{HOeq})  can be generalized into a time-dependent Schr\"odinger equation
\begin{eqnarray}\label{NLStime3}
 {\rm i} \partial_{t}\zeta(t)=(\mathcal{A}+\varepsilon \mathcal{B}(\omega t))\zeta(t),
 \end{eqnarray}
where $\mathcal{A}$ is a positive self-adjoint operator on a separable Hilbert space $\mathcal{H}$ and the perturbation $\mathcal{B}$ is an operator-valued function  from $\T^n$ into the space of
symmetric operators on $\mathcal{H}$. Our aim is to show that for sufficiently small $\varepsilon$, and for $\omega$ belonging to a set of large measure, there exists a unitary transformation which conjugates Eq. (\ref{NLStime3}) to a time independent equation. If  this is true, we will call Eq.  (\ref{NLStime3}) is \textsl{reducible}. From the reducibility of Eq. (\ref{NLStime3}) and relative properties of the transformation we can easily prove the boundedness of the Sobolev norms and pure point spectrum of the relative Floquet operator, which is
defined by
\begin{eqnarray*}
K_{F} : = -{\rm i}\omega \cdot  \partial_{\theta}+ \mathcal{A} +\varepsilon \mathcal{B}(\theta)\quad {\rm on}\ \mathcal{H} \otimes L^2(\T^n).
\end{eqnarray*}
It has been proved in \cite{BEL, BLE, DS, DSV, GY00, H, JOY, N} that the Floquet operator $K_{F}$ is of pure point spectra or no absolutely continuous spectra where $\mathcal{B}$ is bounded. When $\mathcal B$ is unbounded,  the first result was obtained by Bambusi and  Graffi \cite{BG} where they considered  the time dependent Schr\"odinger equation
\begin{eqnarray*}\label{NLStime1}
{\rm i}\partial_{t} \psi(x,t) = H(t) \psi(x,t), x\in \R;  \qquad H(t) : = -\frac{d^2}{dx^2} + Q(x) +\varepsilon V(x,\omega t),\ \varepsilon\in \R,
\end{eqnarray*}
where $Q(x) \sim |x|^{2\alpha}$ with $\alpha>1$ as $|x|\rightarrow \infty$ and $|V(x,\theta)||x|^{-\beta}$ is bounded as $|x|\rightarrow \infty$ for some $\beta<\alpha-1$.
This entails the pure-point nature of the  spectrum of  the Floquet operator
\begin{eqnarray*}\label{floquetspectrum1}
K_{F} : = -{\rm i}\omega \cdot \partial_{\theta}-\frac{d^2}{dx^2}+Q(x)+\varepsilon V(x,\theta),
\end{eqnarray*}
on $L^2(\R) \otimes L^2(\T^n)$ for $\varepsilon$ small. Liu and Yuan \cite{LiuYuan0} solved the case when $\beta\leq \alpha-1$.  Very recently Bambusi \cite{Bam1,Bam2} solved the case when $\beta<\alpha+1$
under some additional assumptions. \\
\indent For 1-d quantum harmonic oscillator the main difficulty encountered by the traditional KAM method seems to be the eigenvalue spacing for the unperturbed operator does not grow. In \cite{EV}  Enss and Veselic proved that, if $\omega$ is rational,  the Floquet operator relative with the 1-d quantum harmonic oscillator has pure point spectrum when the perturbing potential $V$ is bounded and has sufficiently fast decay at infinity.
In \cite{Com87} Combescure obtained the reducibility under time periodic, spatially localized perturbation.
In \cite{Wang} Wang proved the spectrum of the Floquet operator $K$ is pure point for the quasiperiodic case where the perturbing potential
has \textit{exponential decay}. Greb\'ert and Thomann \cite{GT} improved the results in \cite{Wang} from exponential decay to \textit{polynomial decay}.
 In \cite{WLiang} we extended the results in \cite{GT} from polynomial decay to \textit{logarithmic decay}. Quite recently, in \cite{Bam1,Bam2} Bambusi dealt with the unbounded perturbation case for 1d harmonic oscillators. For example he can deal with the case $-\partial_{xx}+x^2+\varepsilon x a_1(\omega t)- {\rm i} a_2(\omega t)\varepsilon \partial_{x}$.  As Bambusi \cite{Bam1} pointed it out that his results didn't contradict with the interesting counterexamples in \cite{Del14} and  \cite{GY00}. \\
\indent  The results about the reducibility for higher spatial dimension are very few. In \cite{EK0} Eliasson and Kuksin obtained the reducibility for the Schr\"odinger equation on $\T^d$. In \cite{GP} Gr\'{e}bert and Paturel firstly  obtained the reducibility for any dimensional harmonic oscillator on $\R^d$ under the temporal quasiperiodic and analytic perturbation.  In this paper we will generalize the results in \cite{GP} from temporal
analytic perturbations  to differential perturbations.   \\
\indent Very recently, Bambusi, Greb\'ert, Maspero and Robert \cite{BaGrMaRo} proved a reducibility result for a quantum harmonic oscillator in  any dimension perturbed by a linear operator which is a polynomial of degree two in $x_j, -{\rm i\partial_j}$ with coefficients being real analytic in $\theta\in \T^n$. The proof depends on the following key fact: for polynomial Hamiltonians of degree at most 2 the correspondence between classical and quantum mechanics is exact(see also \cite{HLS}). But the reducibility problem keeps open for the quantum oscillator in arbitrary dimension with more general unbounded perturbations(see \cite{BaGrMaRo}, page 2). \\

\subsection{Brief description of the setting and main ideas of the proof.}
We use the notations introduced in \cite{GP}. In phase space $(u,\bar{u})\in\mathcal{H}^0\times\mathcal{H}^0$ endowed with the symplectic structure
$\mathrm{i}du\wedge d\bar{u}$, equation (\ref{HOeq}) is Hamiltonian with
\begin{eqnarray}\label{HOfun}
H=h(u,\bar{u})+\varepsilon p(\omega t,u,\bar{u}),
\end{eqnarray}
where
$$h(u,\bar{u})=\int_{\R^d}(u_x\bar{u}_x+|x|^2u\bar{u})dx,\ \ \ \ \ \   p(\omega t,u,\bar{u})=\int_{\R^d} V(\omega t,x)u\bar{u}dx.$$
 Expanding $u$ and $\bar{u}$ on the Hermite basis, $u(x)=\sum_{a\in\mathcal{E}}\xi_a\Phi_a(x),\ \bar{u}(x)=\sum_{a\in\mathcal{E}}\eta_a\Phi_a(x)$, the phase
space $(u,\bar{u})\in\mathcal{H}^0\times\mathcal{H}^0$ becomes into the phase space $(\xi,\eta)\in Y_0$(for the definition of $Y_{s}$ see  Subsection \ref{phase}). We endow $Y_0$ with the symplectic structure
$\mathrm{i}d\xi\wedge d\eta$. In this setting, (\ref{HOfun}) reads as
\begin{equation}\label{hameq000}
H(t,\xi,\eta)= \sum\limits_{ a\in\mathcal{E}}   w_a\xi_a\eta_a+ \varepsilon {p}_\omega(t,\xi,\eta)
\end{equation}
where
$
 {p}_\omega(t,\xi,\eta)=\la \xi,P(\omega t)\eta\ra=\sum_{a,b\in\mathcal{E}}P_a^b(\omega t)\xi_a\eta_b,
$
which is quadratic in $(\xi,\eta)$ with
\begin{eqnarray}\label{Pijform}
P_a^b(\omega t)=\int_{\R^d} V(\omega t,x)\Phi_a(x)\Phi_b(x)dx,\ \ a,b\in\mathcal{E}.
\end{eqnarray}
Therefore, the reducibility problem of system (\ref{HOeq}) is equivalent to the reducibility problem for the Hamiltonian system
\begin{eqnarray}
\left\{\begin{array}{c}
\dot{\xi}_a=-\mathrm{i}w_a\xi_a-\mathrm{i}\varepsilon (P^T(\omega t)\xi)_a,\\
\dot{\eta}_a
=\ \ \mathrm{i}w_a\eta_a+\mathrm{i}\varepsilon (P(\omega t)\eta)_a,
\end{array}\right.\ a\in\mathcal{E}\label{hs00}
\end{eqnarray}
associated to the non autonomous quadratic  Hamiltonian function (\ref{hameq000}). We will give a  general  reducibility result  in Subsection \ref{s2.4}
which can be applied to system (\ref{hs00}) and the proof is based on KAM theory. We remark that KAM theory is almost well-developed for nonlinear Hamiltonian PDEs in 1-d context. See \cite{BBP2, GY1, KLiang, KP, Ku0, Ku1, Ku2, LZ, LY1, LiuYuan, P2, ZGY} for 1-d KAM results.  Comparing with 1-d case, the KAM results for multidimensional PDEs are relatively few. Refer to  \cite{EGK, EK, GP1, GXY, GY2, PX} for n-d results. See \cite{Berti} for an almost complete picture of recent KAM theory.

\noindent\emph{Highlights.} By introducing $\theta=\omega t$, system (\ref{hs00}) is equivalent to an autonomous system with Hamiltonian
\begin{eqnarray}\label{hamit}
\mathcal{H}(\theta,y,\xi,\eta)
=\sum\limits_{j=1}^n\omega_jy_j+\la\xi,N_0\eta\ra + \varepsilon \la\xi, P(\theta)\eta\ra,\ \ \ \ \la\xi,N_0\eta\ra=\sum\limits_{a\in\mathcal{E}}w_a\xi_a\eta_a.
\end{eqnarray}
In  \cite{GP} Gr\'{e}bert and Paturel  assumed that the potential $V(\theta, \cdot)$ is real analytic with value in $\mathcal{H}^{s}(\R^d)$ with $s>2(d-2)\geq 0$ when $d\geq 2$. Here we only discuss the higher dimensional case for simplicity.
Then in Lemma 3.2  Gr\'{e}bert and Paturel  \cite{GP} proved that  $P( \theta)\in \mathcal{M}_{s,\alpha}(D_0, \sigma)$, where $\alpha>0$ is critical(see the definition of $\mathcal{M}_{s,\alpha}(D_0,\sigma)$ in Section \ref{section2}). In this paper we only assume that the potential $V(\theta, \cdot )\in \mathcal{C}^{\beta}(\T^n, \mathcal{H}^s(\R^d))$
with the same condition on $s$.  Using the techniques from \cite{GP} and functional analysis(\cite{Ber}) we can prove that
$P(\theta)\in {\mathcal{C}^{\beta}}(\T^n, \mathcal{M}_{s,\alpha})$(see Lemma \ref{L3.3}), which can be considered as a parallel lemma as Lemma 3.2 in \cite{GP}. \\
\indent Now our main problem is to build a similar reducibility result for the Hamiltonian (\ref{hamit}) when $P(\theta)\in C^{\beta}(\T^n, \mathcal{M}_{s,\alpha})$ and thus a smooth KAM is needed here.
We recall that the smoothing techniques were firstly introduced by Moser \cite{Moser1, Moser2} and developed later by many people, see
Salamon and Zehnder \cite{SaZe},  P\"oschel \cite{Pos2},  Chierchia and Qian \cite{CQ}, Berti and Bolle \cite{BB14, BB13} and etc.  An earlier reducibility result about time dependent Schr\"odinger operator with finite differentiable unbounded perturbation has been obtained by Yuan and Zhang in \cite{YZ13}.  In \cite{Bam1} Bambusi's method in dealing with the differential perturbations  is more close to the classical proof in \cite{Sal04}. A significant difference between our paper and \cite{YZ13}, \cite{Bam1} is that we deal with the matrix block, not the single matrix element. For the following proof we almost follow the presentation of \cite{CQ} in the spirit of \cite{Sal04} combined with the KAM method in  \cite{GP}. \\
\indent More clearly, we will introduce a series of analytic functions $P^{(\nu)}(\theta)\in \mathcal{M}_{s,\alpha}(D_0, \sigma_{\nu}),\ \nu=0,1,2,\cdots,$  and $P^{(\nu)}(\theta)\rightarrow   P(\theta)$ in $\theta\in \T^n$ as $\sigma_\nu$
shrinking to 0($\nu\rightarrow\infty$), see Lemmas \ref{PP} and \ref{l4.1} for details.\\
\indent  Thus, instead of considering  the original function $\mathcal{H}$, in each KAM step, we consider the analytic Hamiltonian function
\begin{eqnarray*}
H^{(\nu)}(\theta,y,\xi,\eta)
=\sum\limits_{j=1}^n\omega_jy_j+\la\xi,N_0\eta\ra+ \varepsilon\la\xi, P^{(\nu)}(\theta)\eta\ra
 \end{eqnarray*}
 which is an approximation  of (\ref{hamit}).
 We suppose that there exists symplectic map $\Phi^\nu$ such that
 \begin{eqnarray*}\label{everyterm}
H^{(\nu)}\circ \Phi^{\nu}=\sum\limits_{j=1}^n\omega_jy_j+\la\xi,N_\nu\eta\ra+ \la\xi, P_{\nu}(\theta)\eta\ra
\end{eqnarray*}
 with the norm of $P_{\nu}(\theta)$ is less than $\epsilon_{\nu}/2 $.
Then in $(\nu+1)^{th}$ step, we consider the Hamiltonian
\begin{eqnarray*}
H^{(\nu+1)}(\theta,y,\xi,\eta)
&=&\sum\limits_{j=1}^n\omega_jy_j+\la\xi,N_0\eta\ra+  \varepsilon\la\xi, P^{(\nu+1)}(\theta)\eta\ra\\
&=&H^{(\nu)} +(H^{(\nu+1)}-H^{(\nu)})
 \end{eqnarray*}
 which is tiny different from $H^{(\nu)}$.
 By $\Phi^\nu$  we have
\begin{eqnarray*}
H^{(\nu+1)}\circ \Phi^{\nu}=H^{(\nu)}\circ \Phi^{\nu}+(H^{(\nu+1)}-H^{(\nu)})\circ \Phi^{\nu}.
\end{eqnarray*}
We  shrink the radius of the analytic domain from $\sigma_\nu$ to $\sigma_{\nu+1}=\sigma_\nu^{\frac{3}{2}}$ in order to prove  that  the norm of additional quadratic perturbation
term $(H^{(\nu+1)}-H^{(\nu)})\circ \Phi^{\nu}$ is less than $\epsilon_{\nu}/2$ too(see Lemma \ref{L4.5}).
Then from Proposition 4.1(\cite{GP}) we can construct $\Phi_{\nu+1}$ such that
\begin{eqnarray*}
H^{(\nu+1)}\circ \Phi^{\nu+1}=H^{(\nu+1)}\circ \Phi^{\nu}\circ \Phi_{\nu+1}= h_{\nu+1}+\la
\xi,P_{\nu+1}(\theta)\eta\ra,
\end{eqnarray*}
where  $h_{\nu+1}$ is in normal form and the norm of $P_{\nu+1}$ is less than $ \epsilon_{\nu+1}/2$.
Thus we can formulate the iteration lemma. Finally,  let $\nu\rightarrow \infty$, we obtain
$\mathcal{H}\circ \Phi_{\omega}^{\infty}=\sum\limits_{j=1}^n\omega_jy_j+\la\xi,N_{\infty}\eta\ra$,
where $(\xi,\eta)\in Y_{s'}$ with $1\leq s'\leq \max\{s,1\}$ and
 $N_\infty(\omega)=\lim_{\nu\rightarrow\infty}N_{\nu}(\omega)$  in normal form
and with the norm close to $N_0$.
For obtaining the above proof  we need to show that $H^{(\nu)}\rightarrow \mathcal{H}$ and $\Phi^{\nu}\rightarrow \Phi_{\omega}^{\infty}$.
Furthermore, from the estimation of $\Phi^{\nu}-\Phi^{\nu-1}$ and Lemma \ref{smoothinginverse} we deduce that
$\Phi_\omega^\infty-id \in \mathcal{C}^{\mu}(\T^n, \mathfrak{L}(Y_{s'}, Y_{s' +2\alpha}))$ for all $0\leq s'\leq s$, where
$\mu\leq \frac{2}{9}\beta$ and is not an integer.

\indent The paper is organized as follows: In Sect. 2 we state the abstract reducibility theorem: Theorem \ref{MainTheorem}. In Sect. 3 we prove Theorem \ref{quantumth}, Corollary \ref{coro01} and Corollary \ref{coro02},
which are direct results from Theorem \ref{MainTheorem}. In Sect. 4 we prove Theorem \ref{MainTheorem}. The section is split into a few subsections. Finally, the appendix contains some technical lemmas.

\section{Reducibility Theorem for Quantum Harmonic Oscillator in $\R^d$ with Quasiperiodic in Time Potential: Smooth Version.}\label{section2}

\subsection{Setting}\label{phase}
{\noindent\emph{Notations.}}
Denote $\C$, $\R$, $\Z$, $\N$    be the set of all complex numbers, real numbers, integers and nonnegative integers, respectively. $\T=\R/ 2\pi\Z$. $\la \cdot,\cdot\ra$ is the standard scalar product in $\ell^2$, while
$\la f \ra:=\frac{1}{(2\pi)^n}\int_{\T^n}f(\theta)d\theta$ be the mean value of $f$ on the torus $\T^n$.
 $|\cdot|$ will be general to denote a supremum norm with a notable exception: for a multi-index
$k=(k_1,\cdots, k_n)\in\Z^n$, denote $|k|=\sum_{i=1}^n |k_i|$.
In the whole paper we use $\nu$   to stand for the KAM iteration step.

\noindent\emph{Linear space.} Following the notations in Subsection \ref{s1.1}, for $s\geq0,$ we consider the complex weighted $\ell^2-$space
$$\ell_{s}^2:=\{\xi=(\xi_a\in\C)_{a\in\mathcal{E}}\big|\  \|\xi\|_s <\infty\}$$
with $\|\xi\|_s^2:=\sum\limits_{a\in\mathcal{E}} |\xi_a|^2w_a^{s}$ and $\ell_{0}^2$ is $\ell^2$.
Then we define
 $$Y_s:=\ell_{s}^2\times\ell_{s}^2=\{\zeta=(\zeta_a=(\xi_a,\eta_a)\in\C^2)_{a\in\mathcal{E}}\big|\ \|\zeta\|_s<\infty\}$$
with  $\|\zeta\|_s^2:=\sum\limits_{a\in\mathcal{E}} (|\xi_a|^2+|\eta_a|^2) w_a^{s}$.\\
\indent We provide the space $Y_s,\ s\geq0,$ with the symplectic structure $\mathrm{i}\sum_{a\in\mathcal{E}}d \xi_a\wedge d \eta_a$. To any smooth function
$f(\xi,\eta)$ defined on a domain of $Y_s$, it corresponds to the Hamiltonian system:
\begin{eqnarray}
\left\{\begin{array}{c}
\dot{\xi} =-\mathrm{i}\nabla_{\eta}f(\xi,\eta)\\
\dot{\eta} =\ \ \mathrm{i}\nabla_{\xi}f(\xi,\eta)
\end{array}\right.
\end{eqnarray}
where $\nabla f=(\nabla_{\xi}f,\nabla_{\eta}f)^T$ is the gradient with respect to the scalar product in $Y_0.$\\
\indent For any smooth functions $f(\xi,\eta),\ g(\xi,\eta)$,   the Poisson bracket of $f$, $g$ is given by
\begin{eqnarray}
\{f,g\}:= \mathrm{i}\sum_{a\in\mathcal{E}}\left(\frac{\partial f}{\partial\xi_a}\frac{\partial g}{\partial\eta_a}-\frac{\partial g}{\partial\xi_a}\frac{\partial
f}{\partial\eta_a}\right).
\end{eqnarray}
\indent We   also consider the extended phase space
$\mathcal P_s:=\T^n\times \R^{n}\times Y_s\ni (\theta, y, \xi,\eta)$.
For smooth functions $f(\theta, y,\xi,\eta),\ g(\theta, y,\xi,\eta)$,  the  Poisson bracket is given  by
\begin{eqnarray}
\{f,g\}:=\sum_{j=1}^n\left(\frac{\partial f}{\partial y_j}\frac{\partial g}{\partial\theta_j}-\frac{\partial g}{\partial y_j}\frac{\partial
f}{\partial\theta_j}\right) + \mathrm{i}\sum_{a\in\mathcal{E}}\left(\frac{\partial f}{\partial\xi_a}\frac{\partial g}{\partial\eta_a}-\frac{\partial g}{\partial\xi_a}\frac{\partial
f}{\partial\eta_a}\right).
\end{eqnarray}

\noindent\emph{Infinite matrices.} We denote by $\mathcal{M}_{s,\alpha}$ the set of infinite matrices $A:\mathcal{E}\times\mathcal{E}\rightarrow \C$ with the norm
\begin{eqnarray*}
|A|_{s,\alpha}:=\sup_{a,b\in\mathcal{E}}\left(w_aw_b\right)^{\alpha}\left\|A_{[a]}^{[b]}\right\|\left(\frac{\sqrt{\min(w_a,w_b)}+|w_a-w_b|}{\sqrt{\min(w_a,w_b)}}\right)^{s/2}<+\infty,
\end{eqnarray*}
where $A_{[a]}^{[b]}$ denotes the restriction of $A$ to the block $[a]\times[b]$ and $\|\cdot\|$ denotes the operator norm.
We also denote $\mathcal{M}_{s,\alpha}^+$ be the subspace of $\mathcal{M}_{s,\alpha}$ satisfying that an infinite matrix $A\in\mathcal{M}_{s,\alpha}^+$ if
\begin{eqnarray*}
|A|_{s,\alpha+}:=\sup_{a,b\in\mathcal{E}}(w_aw_b)^{\alpha}\left(1+|w_a-w_b|\right)\left\|A_{[a]}^{[b]}\right\|\left(\frac{\sqrt{\min(w_a,w_b)}+|w_a-w_b|}{\sqrt{\min(w_a,w_b)}}\right)^{s/2}<+\infty.
\end{eqnarray*}
\indent From the definition we have following simple facts.
\begin{Lemma}\label{completed}
$(\mathcal{M}_{s,\alpha}, |\cdot|_{s,\alpha})$ and $(\mathcal{M}_{s,\alpha}^+, |\cdot|_{s,\alpha+})$ are Banach spaces.
\end{Lemma}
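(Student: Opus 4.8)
The plan is to recognize $\mathcal{M}_{s,\alpha}$ as a weighted block-$\ell^\infty$ space and to run the standard completeness argument blockwise. For $a,b\in\mathcal{E}$ I would write
\[
\kappa_{s,\alpha}(a,b):=(w_aw_b)^{\alpha}\left(\frac{\sqrt{\min(w_a,w_b)}+|w_a-w_b|}{\sqrt{\min(w_a,w_b)}}\right)^{s/2},
\]
so that $|A|_{s,\alpha}=\sup_{a,b\in\mathcal{E}}\kappa_{s,\alpha}(a,b)\|A_{[a]}^{[b]}\|$. The first thing to record is that $\kappa_{s,\alpha}(a,b)$ depends only on the classes $[a],[b]$ and is bounded below by a positive constant; indeed $\kappa_{s,\alpha}(a,b)\ge 1$, since $w_a\ge d\ge 1$, $\alpha>0$, and the parenthetical factor is $\ge 1$. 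Granting this, the norm axioms are immediate: homogeneity is clear; the triangle inequality follows from the operator-norm triangle inequality $\|(A+B)_{[a]}^{[b]}\|\le\|A_{[a]}^{[b]}\|+\|B_{[a]}^{[b]}\|$ applied on each block and then taking the supremum; and $|A|_{s,\alpha}=0$ forces $\|A_{[a]}^{[b]}\|=0$ for all $a,b$ because $\kappa_{s,\alpha}(a,b)>0$, hence $A=0$, the blocks $[a]\times[b]$ partitioning $\mathcal{E}\times\mathcal{E}$.

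For completeness, let $(A^{(m)})_{m\ge1}$ be Cauchy in $\mathcal{M}_{s,\alpha}$. Fixing $a,b$, the estimate $\|(A^{(m)})_{[a]}^{[b]}-(A^{(n)})_{[a]}^{[b]}\|\le\kappa_{s,\alpha}(a,b)^{-1}|A^{(m)}-A^{(n)}|_{s,\alpha}$ shows that $\big((A^{(m)})_{[a]}^{[b]}\big)_m$ is Cauchy in the finite-dimensional space of $\mathrm{card}\,[a]\times\mathrm{card}\,[b]$ complex matrices with the operator norm, which is complete; call its limit $A_{[a]}^{[b]}$. Gluing these blocks (recall $\mathrm{card}\,[a]\le w_a^{d-1}<\infty$) defines a matrix $A:\mathcal{E}\times\mathcal{E}\to\C$. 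Since a Cauchy sequence is bounded, say $|A^{(m)}|_{s,\alpha}\le M$ for all $m$, letting $m\to\infty$ in $\kappa_{s,\alpha}(a,b)\|(A^{(m)})_{[a]}^{[b]}\|\le M$ gives $\kappa_{s,\alpha}(a,b)\|A_{[a]}^{[b]}\|\le M$ for every $a,b$, whence $|A|_{s,\alpha}\le M$ and $A\in\mathcal{M}_{s,\alpha}$.

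Finally, to obtain $A^{(m)}\to A$, fix $\varepsilon>0$ and $N$ with $|A^{(m)}-A^{(n)}|_{s,\alpha}<\varepsilon$ for $m,n\ge N$; for each $a,b$ and each $m\ge N$, letting $n\to\infty$ in $\kappa_{s,\alpha}(a,b)\|(A^{(m)})_{[a]}^{[b]}-(A^{(n)})_{[a]}^{[b]}\|\le\varepsilon$ yields $\kappa_{s,\alpha}(a,b)\|(A^{(m)})_{[a]}^{[b]}-A_{[a]}^{[b]}\|\le\varepsilon$, and taking the supremum over $a,b$ gives $|A^{(m)}-A|_{s,\alpha}\le\varepsilon$. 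The space $(\mathcal{M}_{s,\alpha}^+,|\cdot|_{s,\alpha+})$ is handled by the identical argument with $\kappa_{s,\alpha}(a,b)$ replaced by $\kappa_{s,\alpha}^+(a,b):=(w_aw_b)^{\alpha}(1+|w_a-w_b|)\left(\frac{\sqrt{\min(w_a,w_b)}+|w_a-w_b|}{\sqrt{\min(w_a,w_b)}}\right)^{s/2}$, which again depends only on $([a],[b])$ and satisfies $\kappa_{s,\alpha}^+(a,b)\ge1$. I do not expect any genuine obstacle here; the only point meriting care is the bookkeeping that elements of $\mathcal{M}_{s,\alpha}$ are organized as families of finite-dimensional blocks $[a]\times[b]$, so that blockwise completeness together with the uniform positive lower bound on the weights is precisely what makes the patching step work and the norm separate points.
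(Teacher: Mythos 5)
Your proof is correct and complete; the paper itself offers no proof of this lemma, dismissing it as a "simple fact" following from the definition, and your argument is exactly the standard verification that fills this in. The two points you rightly isolate — that the weight $\kappa_{s,\alpha}(a,b)\ge 1$ is uniformly bounded below and that each block $[a]\times[b]$ is finite-dimensional (since $\mathrm{card}\,[a]\le w_a^{d-1}$) — are precisely what make the blockwise completeness argument go through.
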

\begin{Lemma}[\cite{GP},\ Lemma 2.1]\label{daishu01}Let $0<\alpha\leq1$ and $s\geq0$, there exists a constant $c(\alpha,s)>0$ such that\\
i) If $A\in \mathcal{M}_{s,\alpha}$ and $B\in \mathcal{M}_{s,\alpha}^+$, then $AB$ and $BA$  belong to $\mathcal{M}_{s,\alpha}$ and
\begin{eqnarray*}
|AB|_{s,\alpha},\ |BA|_{s,\alpha}\leq c(\alpha,s)|A|_{s,\alpha}|B|_{s,\alpha+}.
\end{eqnarray*}
ii) If $A,B\in \mathcal{M}_{s,\alpha}^+$, then $AB$    belongs to $\mathcal{M}_{s,\alpha}^+$ and
\begin{eqnarray*}
|AB|_{s,\alpha+} \leq c(\alpha,s)|A|_{s,\alpha+}|B|_{s,\alpha+}.
\end{eqnarray*}
iii) If $A\in \mathcal{M}_{s,\alpha}$, then for any $t\geq1,\ A\in {\mathfrak{L}}(\ell^2_t,\ell^2_{-t})$  and
\begin{eqnarray*}
\|A\xi\|_{-t}\leq c(\alpha,s)|A|_{s,\alpha}\|\xi\|_t.
\end{eqnarray*}
iv) If $A\in \mathcal{M}_{s,\alpha}^+$, then $ A\in \mathfrak{L}(\ell_{s'}^2,\ell^2_{s'+2\alpha})$  for all $0\leq s'\leq s$ and
\begin{eqnarray*}
\|A\xi\|_{ s'+2\alpha}\leq c(\alpha,s)|A|_{s,\alpha+}\|\xi\|_{s'};
\end{eqnarray*}
furthermore, $A\in\mathfrak{L}(\ell^2_{1},\ell^2_{1})$ and
\begin{eqnarray*}
\|A\xi\|_{1}\leq c(\alpha,s)|A|_{s,\alpha+}\|\xi\|_{1}.
\end{eqnarray*}
\end{Lemma}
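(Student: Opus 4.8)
\smallskip
\noindent\textbf{Proof proposal.} The statement is a package of routine block‑matrix estimates; the only substantial inputs are a submultiplicativity property of the weight
$\mathfrak n(a,b):=\big(\sqrt{\min(w_a,w_b)}+|w_a-w_b|\big)\big/\sqrt{\min(w_a,w_b)}$
occurring in $|\cdot|_{s,\alpha}$, together with the convergence of certain series over intermediate blocks — which is exactly what $\alpha>0$ buys. Throughout one works blockwise, the index of summation running over equivalence classes, i.e.\ over the distinct eigenvalues $w\in\widehat{\mathcal E}$ (spacing $2$), the block operator norms absorbing the multiplicities $d_j$ automatically. \emph{For i) and ii)} write $(AB)_{[a]}^{[b]}=\sum_{[c]}A_{[a]}^{[c]}B_{[c]}^{[b]}$, so $\|(AB)_{[a]}^{[b]}\|\le\sum_{[c]}\|A_{[a]}^{[c]}\|\,\|B_{[c]}^{[b]}\|$. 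Substituting the defining bounds for $\|A_{[a]}^{[c]}\|$, $\|B_{[c]}^{[b]}\|$, multiplying by the weight defining $|AB|_{s,\alpha}$ (resp.\ $|AB|_{s,\alpha+}$), and using $(w_aw_b)^\alpha(w_aw_c)^{-\alpha}(w_cw_b)^{-\alpha}=w_c^{-2\alpha}$, everything reduces to a bound $\sum_{[c]}w_c^{-2\alpha}R(a,c,b)\le\mathrm{const}$ uniform in $a,b$, where $R$ is a product of ratios of the $\mathfrak n$'s (and, for ii), of the $1+|\cdot|$'s). The crucial step is the \emph{weight inequality} $\mathfrak n(a,b)\le\mathfrak n(a,c)\,\mathfrak n(c,b)$, proved by distinguishing $w_c\le\min(w_a,w_b)$, $w_c\ge\max(w_a,w_b)$ and $w_c$ strictly between; the first two cases are immediate, and the third, after cancelling the common factor, reduces to monotonicity of $t\mapsto t-\sqrt t$ on $[1,\infty)$, which applies since $w_a,w_b,w_c\ge d\ge1$. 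This makes the $\mathfrak n$‑ratio in $R$ at most $1$. For i) there is no extra $1+|\cdot|$ factor, so it remains to check $\sup_b\sum_{[c]}w_c^{-2\alpha}(1+|w_c-w_b|)^{-1}<\infty$; splitting into $w_c\le w_b/2$, $w_c\asymp w_b$, $w_c\ge2w_b$ bounds this uniformly on $\{w_b\ge d\}$ precisely because $\alpha>0$ (the computation with $a,b$ interchanged settles $|BA|_{s,\alpha}$). For ii) the bare triangle inequality $(1+|w_a-w_c|)(1+|w_c-w_b|)\ge1+|w_a-w_b|$ would leave $\sum_{[c]}w_c^{-2\alpha}$, which need not converge; instead I would use the sharper $(1+|w_a-w_c|)(1+|w_c-w_b|)\gtrsim(1+|w_a-w_b|)\big(1+\min(|w_a-w_c|,|w_c-w_b|)\big)$, recover a factor $\big(1+\min(|w_a-w_c|,|w_c-w_b|)\big)^{-1}$, split $[c]$ according to whether $w_c$ is nearer $w_a$ or $w_b$, and apply the previous splitting to the corresponding half.

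\emph{For iii)}, discard $\mathfrak n(a,b)^{s/2}\ge1$: $\|(A\xi)_{[a]}\|\le\sum_{[b]}\|A_{[a]}^{[b]}\|\,\|\xi_{[b]}\|\le|A|_{s,\alpha}\sum_{[b]}(w_aw_b)^{-\alpha}\|\xi_{[b]}\|$, hence $\|A\xi\|_{-t}^2=\sum_{[a]}w_a^{-t}\|(A\xi)_{[a]}\|^2\le|A|_{s,\alpha}^2\big(\sum_{[a]}w_a^{-t-2\alpha}\big)\big(\sum_{[b]}w_b^{-\alpha}\|\xi_{[b]}\|\big)^2$, and Cauchy--Schwarz (pairing $w_b^{-\alpha-t/2}$ with $w_b^{t/2}\|\xi_{[b]}\|$) together with $\sum_{[b]}w_b^{-2\alpha-t}<\infty$, valid because $t\ge1$, finishes. \emph{For iv)}, I would show that the reweighted matrix $\widetilde A$ with blocks $\widetilde A_{[a]}^{[b]}=w_a^{(s'+2\alpha)/2}A_{[a]}^{[b]}w_b^{-s'/2}$ is bounded on $\ell^2$. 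Since $s\ge s'$, $\mathfrak n(a,b)^{-s/2}\le\mathfrak n(a,b)^{-s'/2}$, and the key elementary fact is $w_a^{s'/2}w_b^{-s'/2}\mathfrak n(a,b)^{-s'/2}\le1$ (for $w_a\ge w_b$ this clears to $w_a-w_b\le\sqrt{w_b}\,(w_a-w_b)$, true as $w_b\ge1$; the other case is trivial). Thus $\|\widetilde A_{[a]}^{[b]}\|\lesssim|A|_{s,\alpha+}\,w_b^{-\alpha}(1+|w_a-w_b|)^{-1}$, with the further gain $\mathfrak n(a,b)\gtrsim w_a/\sqrt{w_b}$ when $w_a\ge2w_b$. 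On $\{w_a<2w_b\}$ a Schur/Young estimate (the same logarithmic splitting as in i), with $\alpha>0$ supplying the decay) gives boundedness; on $\{w_a\ge2w_b\}$ the kernel is dominated by a product $u_av_b$ with $u,v\in\ell^2$ (here $\alpha>0$ puts $v$ in $\ell^2$), so that tail is controlled directly by Cauchy--Schwarz — a crude Schur bound is not sharp enough there. The final assertion $A\in\mathfrak L(\ell^2_1,\ell^2_1)$ is the case $s'=1$ of the same scheme: one only has to see that $w_a^{1/2}w_b^{-1/2-\alpha}(1+|w_a-w_b|)^{-1}$ defines a bounded operator, which follows from $w_a^{1/2}(1+|w_a-w_b|)^{-1}\lesssim w_b^{1/2}$ on $\{w_a\le2w_b\}$ and $\lesssim w_a^{-1/2}$ on $\{w_a>2w_b\}$, after which $\alpha>0$ concludes.

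\emph{Main obstacle.} The genuinely nontrivial work, and the step I expect to be delicate, is the weight bookkeeping: establishing the submultiplicativity $\mathfrak n(a,b)\le\mathfrak n(a,c)\mathfrak n(c,b)$, and extracting \emph{just enough} decay from the weights that the series over the intermediate or summation index converge using only $\alpha>0$. In the regimes where a row or column of the relevant nonnegative kernel decays merely like $(1+|m|)^{-1}$ — which is not summable — a naive Schur test fails, and one must exploit the convolution, resp.\ rank‑one–plus–Cauchy–Schwarz, structure of the kernel.
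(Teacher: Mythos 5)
The paper does not prove this lemma at all: it is imported verbatim as Lemma~2.1 of \cite{GP}, so there is no in-paper argument to compare against. Your proposal reconstructs essentially the standard proof: the blockwise product bound, the submultiplicativity $\mathfrak n(a,b)\le\mathfrak n(a,c)\,\mathfrak n(c,b)$ of the weight (your three-case verification, with the middle case reducing to monotonicity of $t-\sqrt t$, is correct), the collapse $(w_aw_b)^\alpha(w_aw_cw_cw_b)^{-\alpha}=w_c^{-2\alpha}$, the refined inequality $(1+|w_a-w_c|)(1+|w_c-w_b|)\gtrsim(1+|w_a-w_b|)(1+\min(|w_a-w_c|,|w_c-w_b|))$ needed for ii) since $\sum_{[c]}w_c^{-2\alpha}$ need not converge, the double Cauchy--Schwarz for iii), and the reweighting plus the elementary bound $w_a^{s'/2}w_b^{-s'/2}\mathfrak n(a,b)^{-s'/2}\le1$ for iv). Parts i)--iii) are correct as written.

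There are, however, two quantitative slips in iv), both of the same nature: you discard a helpful factor and then claim an $\ell^2$-membership that only holds for $\alpha>1/2$, not for all $\alpha>0$. First, on $\{w_a\ge2w_b\}$ your reduced kernel is $w_b^{-\alpha}w_a^{-1}$, and the factorization $u_av_b$ with $v_b=w_b^{-\alpha}$ has $v\notin\ell^2$ when $\alpha\le1/2$; the rank-one Cauchy--Schwarz as stated fails. It is repaired either by keeping the truncation inside the inner sum, $\bigl(\sum_{w_b\le w_a/2}w_b^{-\alpha}x_b\bigr)^2\le\bigl(\sum_{w_b\le w_a/2}w_b^{-2\alpha}\bigr)\|x\|^2\lesssim w_a^{(1-2\alpha)_+}\|x\|^2$, after which $\sum_a w_a^{-2+(1-2\alpha)_+}<\infty$, or by a Schur test with weights $w^{-\gamma}$, $0<\gamma<1-\alpha$. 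Second, in the $\ell^2_1\to\ell^2_1$ statement you drop the factor $w_a^{-\alpha}$ from the kernel; on $\{w_a>2w_b\}$ this leaves $w_a^{-1/2}w_b^{-1/2-\alpha}$ with $w_a^{-1/2}\notin\ell^2$, and neither the rank-one bound nor the truncated Cauchy--Schwarz closes. Keeping $w_a^{-\alpha}$ gives $w_a^{-1/2-\alpha}w_b^{-1/2-\alpha}$, a genuine product of two $\ell^2$ sequences for every $\alpha>0$, and the argument goes through. With these two repairs the proposal is a complete and correct proof.
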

\noindent{\emph{Normal form.}} We introduce the following definitions.
\begin{Definition}
A matrix $F:\ \mathcal{E}\times\mathcal{E}\rightarrow\C$ is  Hermitian, i.e., $F_{b}^a=\overline{F_{a}^b},\ a,b\in\mathcal{E}.$
\end{Definition}
\begin{Definition}
A matrix $N:\ \mathcal{E}\times\mathcal{E}\rightarrow\C$ is in normal form, and we denote $N\in\mathcal{NF},$ if\\
(i) $N$ is Hermitian,\\
(ii) $N$ is block diagonal, i.e., $N_{b}^a=0,$ for $w_a\neq w_b$.
\end{Definition}
\noindent{\emph{Quadratic form.}} To a matrix $Q=(Q_a^b)_{a,b\in\mathcal{E}}\in\mathfrak{L}(\ell_t^2,\ell_{-t}^2)$ we associate in a unique way a quadratic form $q(\xi,\eta)$ on $Y_t$ by the formula
$q(\xi,\eta):=\la\xi,Q\eta\ra=\sum_{a,b\in\mathcal{E}}Q_{a}^b\xi_a\eta_b$ and the Poisson bracket
$$\{q_1,q_2\}(\xi,\eta)=-\mathrm{i}\la\xi,[Q_1,Q_2]\eta\ra$$
where $[Q_1,Q_2]=Q_1Q_2-Q_2Q_1$ is the commutator of   two matrices $Q_1$ and $Q_2$. Moreover, if $Q\in \mathcal{M}_{s,\alpha}$ then $$\sup_{a,b\in\mathcal{E}}\left\|(\nabla_\xi\nabla_\eta q)_{[a]}^{[b]}\right\|\leq\frac{|Q|_{s,\alpha}}{(w_aw_b)^{\alpha}}\left(\frac{\sqrt{\min(w_a,w_b)}}{\sqrt{\min(w_a,w_b)}+|w_a-w_b|}\right)^{s/2}.$$

\noindent\emph{Parameter.} In  the paper $\omega$ will play the role of a parameter belonging to $D_0=[0,2\pi]^n$.
All the constructed functions will depend on $\omega$ with $\mathcal{C}^1$ regularity. When a function is only defined on a Cantor subset of $D_0$ the regularity
is understood in  Whitney sense.

\noindent\emph{A class of quadratic Hamiltonians.} Let $D\subset D_0,\ s\geq0,\ \alpha>0$ and $\sigma>0$. We denote by $\mathcal{M}_{s,\alpha}(D,\sigma)$ the set of
mappings  as
$\T^n_\sigma\times D\ni (\theta,\omega)\mapsto Q(\theta,\omega)\in \mathcal{M}_{s,\alpha}$
which is real analytic on $\theta\in \T^n_\sigma:=\left\{\theta\in\C^n\ \big|\ |\Im \theta|<\sigma\right\}$ and $\mathcal{C}^1$
continuous on  $\omega\in D$. This space is equipped with the norm
$$[Q]_{s,\alpha}^{D,\sigma}:=\sup_{\omega\in D,|\Im \theta|<\sigma,|k|=0,1}\left|\partial^k_\omega Q(\theta,\omega)\right|_{s,\alpha}.$$
In view of Lemma \ref{daishu01}, to a matrix $Q\in\mathcal{M}_{s,\alpha}(D,\sigma)$, we can associate the quadratic form on $Y_t$ with $t\geq 1$
$$q(\xi,\eta;\omega,\theta)=\la\xi,Q(\omega,\theta)\eta\ra$$
and we have
$$|q(\xi,\eta;\omega,\theta)|\leq c(\alpha,s)[Q]_{s,\alpha}^{D,\sigma}\left\|(\xi,\eta)\right\|_t^2$$ for $(\xi,\eta)\in Y_t,\ \omega\in D,\ \theta\in\T^n_\sigma.$ \\
\indent The subspace of $ \mathcal{M}_{s,\alpha}(D,\sigma)$ formed by  $F(\theta,\omega)$ such that $\partial^k_\omega F(\theta,\omega)\in \mathcal{M}^+_{s,\alpha},\ |k|=0,1,$ is denoted by $ \mathcal{M}_{s,\alpha}^+(D,\sigma)$ and
equipped with the norm
$$[F]_{s,\alpha+}^{D,\sigma}:=\sup_{\omega\in D,|\Im \theta|<\sigma,|k|=0,1}\left|\partial^k_\omega F(\theta,\omega)\right|_{s,\alpha+}.$$
The subspace of  $  \mathcal{M}_{s,\alpha}(D,\sigma)$ that are independent of $\theta$ will be denoted by $\mathcal{M}_{s,\alpha}(D)$ and for $N\in \mathcal{M}_{s,\alpha}(D),$
$$[N]_{s,{\alpha}}^{D}:=\sup_{\omega\in D,|k|=0,1}|\partial^k_\omega N(\omega)|_{s,{\alpha}}.$$
From Lemma \ref{daishu01},   the following results hold.
\begin{Lemma}\label{daishu}For $0<\alpha\leq1$ and $s\geq0$, there exists a constant $c(\alpha,s)>0$ such that\\
i) If $A\in \mathcal{M}_{s,\alpha}(D,\sigma)$ and $B\in \mathcal{M}_{s,\alpha}^+(D,\sigma)$, then $AB$ and $BA$  belong to $\mathcal{M}_{s,\alpha}(D,\sigma)$ and
\begin{eqnarray*}
[AB]_{s,\alpha}^{D,\sigma},\ [BA]_{s,\alpha}^{D,\sigma}\leq c(\alpha,s)[A]_{s,\alpha}^{D,\sigma}[B]_{s,\alpha+}^{D,\sigma}.
\end{eqnarray*}
ii) If $A,B\in \mathcal{M}_{s,\alpha}^+(D,\sigma)$, then $AB$   belongs to $\mathcal{M}_{s,\alpha}^+(D,\sigma)$ and
\begin{eqnarray*}
[AB]_{s,\alpha+}^{D,\sigma} \leq c(\alpha,s)[A]_{s,\alpha+}^{D,\sigma}[B]_{s,\alpha+}^{D,\sigma}.
\end{eqnarray*}
Moreover, if $A\in \mathcal{M}_{s,\alpha}(D,\sigma)$  and $B\in \mathcal{M}_{s,\alpha}^+(D,\sigma)$ then  $Ae^{B}$, $e^{B}A\in \mathcal{M}_{s,\alpha}(D,\sigma)$  and $e^{B}-Id\in \mathcal{M}_{s,\alpha}^+(D,\sigma)$ satisfying
\begin{eqnarray}\label{exponorm01}
 [Ae^{B}]_{s,\alpha},\
 [e^{B}A]_{s,\alpha}&\leq& [A]_{s,\alpha}^{D,\sigma}e^{c(\alpha,s)[B]_{s,\alpha+}^{D,\sigma}}.
\\ \label{exponorm02}
 [e^{B}-Id]_{s,\alpha+}^{D,\sigma}&\leq& [B]_{s,\alpha+}^{D,\sigma}e^{c(\alpha,s )[B]_{s,\alpha+}^{D,\sigma}}.
\end{eqnarray}
\end{Lemma}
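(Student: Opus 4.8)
\emph{Proof proposal.} The plan is to derive every assertion from the pointwise estimates of Lemma \ref{daishu01} by transferring them to the $[\cdot]^{D,\sigma}$-norms via two routine devices: the Leibniz rule in $\omega$, which reduces the $|k|=1$ part of each norm to the $|k|=0$ part, and the elementary fact that finite products and uniform limits of maps which are real analytic in $\theta\in\T^n_\sigma$ and $\mathcal{C}^1$ in $\omega\in D$ stay in that class.

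For i), given $A\in\mathcal{M}_{s,\alpha}(D,\sigma)$ and $B\in\mathcal{M}_{s,\alpha}^+(D,\sigma)$, I would first note that $(\theta,\omega)\mapsto A(\theta,\omega)B(\theta,\omega)$ has the required regularity, and then apply Lemma \ref{daishu01} i) pointwise to get $|A(\theta,\omega)B(\theta,\omega)|_{s,\alpha}\leq c(\alpha,s)|A(\theta,\omega)|_{s,\alpha}|B(\theta,\omega)|_{s,\alpha+}$; for the $|k|=1$ contribution I would write $\partial_\omega(AB)=(\partial_\omega A)B+A(\partial_\omega B)$, with $\partial_\omega B\in\mathcal{M}_{s,\alpha}^+$, and bound each summand again by Lemma \ref{daishu01} i). Taking suprema over $|\Im\theta|<\sigma$, $\omega\in D$ and $|k|\leq1$ then yields $[AB]_{s,\alpha}^{D,\sigma}\leq c(\alpha,s)[A]_{s,\alpha}^{D,\sigma}[B]_{s,\alpha+}^{D,\sigma}$, and likewise for $BA$. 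Part ii) for $A,B\in\mathcal{M}_{s,\alpha}^+(D,\sigma)$ goes the same way using Lemma \ref{daishu01} ii), giving in particular $[AB]_{s,\alpha+}^{D,\sigma}\leq c(\alpha,s)[A]_{s,\alpha+}^{D,\sigma}[B]_{s,\alpha+}^{D,\sigma}$.

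For the exponential I would set $e^B:=\sum_{n\geq0}B^n/n!$. Iterating Lemma \ref{daishu01} ii) pointwise gives the submultiplicative bound $|B^n(\theta,\omega)|_{s,\alpha+}\leq c(\alpha,s)^{n-1}|B(\theta,\omega)|_{s,\alpha+}^n$ for $n\geq1$, and, expanding the $\omega$-derivative as the telescoping sum $\partial_\omega(B^n)=\sum_{j=0}^{n-1}B^j(\partial_\omega B)B^{n-1-j}$ --- crucially not as $nB^{n-1}\partial_\omega B$, since $B$ and $\partial_\omega B$ need not commute --- one gets $|\partial_\omega(B^n)(\theta,\omega)|_{s,\alpha+}\leq n\,c(\alpha,s)^{n-1}([B]_{s,\alpha+}^{D,\sigma})^n$. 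Hence $[B^n]_{s,\alpha+}^{D,\sigma}\leq n\,c(\alpha,s)^{n-1}([B]_{s,\alpha+}^{D,\sigma})^n$; the series $\sum_{n\geq1}B^n/n!$ then converges normally in the Banach space $\mathcal{M}_{s,\alpha}^+(D,\sigma)$ (Lemma \ref{completed}), its sum $e^B-Id$ lies in $\mathcal{M}_{s,\alpha}^+(D,\sigma)$ with $[e^B-Id]_{s,\alpha+}^{D,\sigma}\leq\sum_{n\geq1}\frac{n\,c(\alpha,s)^{n-1}}{n!}([B]_{s,\alpha+}^{D,\sigma})^n=[B]_{s,\alpha+}^{D,\sigma}e^{c(\alpha,s)[B]_{s,\alpha+}^{D,\sigma}}$, which is (\ref{exponorm02}), and it is analytic in $\theta$ and $\mathcal{C}^1$ in $\omega$ as a uniform limit of such maps (the differentiated series converging uniformly by the same bound). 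Finally, writing $Ae^B=A+A(e^B-Id)$ and applying part i), $Ae^B\in\mathcal{M}_{s,\alpha}(D,\sigma)$ with $[Ae^B]_{s,\alpha}^{D,\sigma}\leq[A]_{s,\alpha}^{D,\sigma}\big(1+c(\alpha,s)[B]_{s,\alpha+}^{D,\sigma}e^{c(\alpha,s)[B]_{s,\alpha+}^{D,\sigma}}\big)\leq[A]_{s,\alpha}^{D,\sigma}e^{c(\alpha,s)[B]_{s,\alpha+}^{D,\sigma}}$ after enlarging the constant, which is (\ref{exponorm01}); $e^BA$ is identical. I expect the only genuinely delicate point to be this last block of exponential estimates: obtaining the closed-form bounds forces the noncommutative telescoping expansion of $\partial_\omega(B^n)$ and the observation that the combinatorial factor $n$ is killed by $1/n!$, so the resulting series still sums to $[B]_{s,\alpha+}^{D,\sigma}e^{c(\alpha,s)[B]_{s,\alpha+}^{D,\sigma}}$. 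Everything else is a mechanical transfer of Lemma \ref{daishu01} through the Leibniz rule together with the stability of the regularity class under products and uniform limits.
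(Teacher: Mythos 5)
Your proposal is correct and follows exactly the route the paper intends: Lemma \ref{daishu} is stated there as a direct consequence of the pointwise estimates of Lemma \ref{daishu01}, and your argument simply supplies the routine details (Leibniz rule in $\omega$, power-series definition of $e^{B}$ with the noncommutative expansion of $\partial_\omega(B^n)$, and absorption of the factor $n$ by $1/n!$). The only cosmetic remark is that the constant $c(\alpha,s)$ in (\ref{exponorm01}) must be enlarged (e.g.\ doubled) relative to the one in Lemma \ref{daishu01}, which you already note.
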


\noindent\emph{Hamiltonian flow.}
When $F$  depends smoothly on $\theta$, $\T^n\ni\theta\mapsto F(\theta)\in \mathcal{M}_{s,\alpha}^+$ with $0<\alpha\leq1$  we associate to $f=\la\xi, F(\theta)\eta\ra $ the symplectic transformation, generated by the time 1 map of $X_f$, on the extended phase space $\mathcal{P}_s: $
$$(\theta,y,\xi,\eta)\mapsto(\theta,\tilde{y},e^{-\mathrm{i}F^T}\xi,e^{\mathrm{i}F}\eta),$$ where
$\tilde{y}=y+\la \xi, \nabla_{\theta}F(\theta) \eta\ra$. In the following we will never calculate $\tilde{y}$ explicitly since the non homogeneous
Hamiltonian system (\ref{hs00}) is equivalent to the system (\ref{autohs}) where the variable conjugated to $\theta$ is not concerned.
Thus, the above symplectic transformation is rewritten into a symplectic linear change, restricted in on $Y_0$, which is  given by
$(\xi,\eta)\mapsto(e^{-iF^T }\xi,e^{iF }\eta)$.  It is well defined and invertible in $ Y_{0} $ as a
consequence  of Lemmas \ref{daishu01} and \ref{daishu}.   Recall that
a sufficient and necessary condition for this map to preserve the symmetry $\eta=\bar{\xi}$ is $F^T(\theta)=\overline{F}(\theta)$ when $\theta\in\T^n$, i.e., $F$ is a Hermitian matrix.

\noindent\emph{$\mathcal{C}^1$ norm of operator in $\omega$.} Given   $(\theta,\omega)\in\T_{\sigma}^n\times D$,  $\Phi(\theta,\omega)\in\mathfrak{L}(Y_s,Y_{s'})$ being $\mathcal{C}^1$
operator with respect to  $\omega$  in Whitney sense, we define the $C^1$ norm of $\Phi(\theta,\omega)$ with respect to $\omega$ by
$$\|\Phi\|^*_{\mathfrak{L}(Y_s,Y_{s'})}=\sup_{(\theta,\omega)\in\T_{\sigma}^n\times D,\ |k|=0,1,\ \|\zeta\|_s\neq0}\frac{\|\partial_\omega^k\Phi (\theta,\omega)\zeta\|_{s'}}{\|\zeta\|_s}.$$
 \subsection{The reducibility theorem}\label{s2.4}
 \noindent In this subsection we state an abstract reducibility theorem for quadratic $t$-quasiperiodic   Hamiltonian of the form
\begin{equation}\label{hameq}
H(t,\xi,\eta)= \la\xi, N_0\eta\ra+ \varepsilon\la\xi, P(\omega t)\eta\ra, \quad (\xi,\eta)\in Y_0,
\end{equation}
 and the associated Hamiltonian system  is
\begin{eqnarray}
\left\{\begin{array}{c}
\dot{\xi}=-\mathrm{i}N_0\xi-\mathrm{i}\varepsilon P^T(\omega t)\xi,\\
\dot{\eta}=\ \ \mathrm{i}N_0\eta+\mathrm{i}\varepsilon P (\omega t)\eta,\
\end{array}\right.\label{hameq00}
\end{eqnarray}
where $N_0=diag\{\lambda_a,\ a\in\mathcal{E}\}$ %and $P(\theta)=(P_{a}^b(\theta))_{a,b\in\mathcal{E}}$ belongs to $\mathcal{C}^\beta(\T^n,\mathcal{M}_{s,\alpha})$
satisfying the following assumptions:\\
 \textbf{Hypothesis A1 - Asymptotics.} There exist   positive constants $c_0,\ c_1,\ c_2$ such that $$c_1 w_a\geq\lambda_a\geq c_2w_a
 \ {\rm and}\ |\lambda_a-\lambda_b|\geq c_0|w_a-w_b|,\ a,b\in\mathcal{E}.$$
\textbf{Hypothesis A2 - Second Melnikov condition in measure estimates.}
 There exist   positive constants $\alpha_1,\alpha_2$ and $c_3$ such that the following holds: for each $0<\kappa<1/4$ and $K>0$ there exists a closed
 subset $ D':= D'(\kappa,K)\subset  D_0$ with
 ${\rm Meas}( D_0\setminus  D')\leq c_3K^{\alpha_1}\kappa^{\alpha_2}$ such that for all $\omega\in  D',$  $k\in \Z^n$ with $0<|k|\leq K$ and  $a,b\in\mathcal{E}$ we have
 $$|\la k,\omega\ra +\lambda_a-\lambda_b|\geq \kappa(1+|w_a-w_b|).$$
\indent Then  we have the following reducibility results.
\begin{Theorem}\label{MainTheorem}
Given a non autonomous Hamiltonian (\ref{hameq}) with $ d\geq1$, we assume  that $(\lambda_{a})_{a\in \mathcal{E}}$ satisfies Hypothesis  A1-A2 and  $P(\theta)\in \mathcal{C}^\beta(\T^n,\mathcal{M}_{s,\alpha})$ with $s\geq0,\ \alpha>0$ and $\beta>\max\{9(2+\frac{d}{\alpha})\frac{\gamma_1}{\gamma_2-24\delta},\ 9n,\ 12(d+1)\}$ where  $\gamma_1= \max\{\alpha_1, n+d+2\},\
\gamma_2=\frac{\alpha\alpha_2}{4+d+2\alpha\alpha_2}$, $\delta\in(0,\frac{\gamma_2}{24})$.\\
\indent  Then there exists $\varepsilon_*(n,\beta,s,d,\delta )>0$ such that
if $0<\varepsilon<\varepsilon_*(n,\beta,s,d,\delta )$, there exist\\
(i) a Cantor set $D_\varepsilon\subset D_0$ with ${\rm Meas}(D_0\setminus D_\varepsilon)\leq c(n,\beta,d,s,\delta)\varepsilon^{\frac{3\delta}{2+d/\alpha}}$;\\
(ii) a $\mathcal{C}^1$ family in $\omega\in D_\varepsilon$(in Whitney sense), linear, unitary and symplectic
coordinate transformation
$\Phi_\omega^\infty(\theta): Y_0\rightarrow Y_0,\ \theta\in\T^n,\ \omega\in D_\varepsilon,$ of the form
\begin{equation*}\label{transf}
(\xi_+,\eta_+)\mapsto(\xi,\eta)=\Phi_\omega^\infty(\theta)(\xi_+,\eta_+)=( \overline{M}_\omega(\theta)\xi_+,{M}_\omega(\theta)\eta_+),
\end{equation*}
where
 $\Phi_{\omega}^{\infty}(\theta)-id \in \mathcal{C}^{\mu}(\T^n, \mathcal{L}(Y_{s'}, Y_{s'+2\alpha}))$($0\leq s'\leq s, \mu\leq\frac{2}{9}\beta$, $\mu\notin\Z$)  and satisfies
\begin{eqnarray*}
\|\Phi_\omega^\infty-id\|_{\mathcal{C}^{\mu}(\T^n, \mathfrak{L}(Y_{s'}, Y_{s'+2\alpha})) }&\leq&   C(n,\beta, \mu, d, s) \epsilon^{\frac{3}{2\beta}(\frac{2}{9}\beta-\mu)};
\end{eqnarray*}
(iii)  a $\mathcal{C}^1$ family of  autonomous quadratic Hamiltonians in  normal forms
$$H_\infty(\xi_+,\eta_+)=\la\xi_+,N_\infty(\omega)\eta_+\ra,\ \omega\in D_\varepsilon,$$
where $N_\infty(\omega)\in \mathcal{NF}$, in particular block diagonal (i.e. $N_{a}^b=0$ for $w_a\neq w_b$), and is close to
$N_0$, i.e. $[N_\infty(\omega)-N_0]_{s,\alpha}^{D_\varepsilon}\leq  c(n,\beta,d,s)\varepsilon, $
such that
$$H(t,\Phi_\omega^\infty(\omega t)(\xi_+,\eta_+))=H_\infty(\xi_+,\eta_+),\ t\in\R,\ (\xi_+,\eta_+)\in Y_{s'},\ \omega\in D_\varepsilon,$$
where $1\leq s'\leq \max\{s,1\}$.\\
\end{Theorem}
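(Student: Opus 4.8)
The plan is to run a finitely differentiable (Nash--Moser smoothed) version of the analytic KAM scheme of \cite{GP}: we trade the missing analyticity of $P$ in $\theta$ for an analytic approximation on a super--geometrically shrinking family of complex strips, run one analytic KAM step against each approximant, and recover the regularity of the limiting transformation from its analytic approximants by an inverse (Bernstein--type) approximation lemma in the spirit of \cite{CQ}. Concretely, first I would fix $\sigma_0>\sigma_1>\cdots\to 0$ with $\sigma_{\nu+1}=\sigma_\nu^{3/2}$ and, for each $\nu$, construct an analytic truncation $P^{(\nu)}(\theta)\in\mathcal{M}_{s,\alpha}(D_0,\sigma_\nu)$ of $P\in\mathcal{C}^\beta(\T^n,\mathcal{M}_{s,\alpha})$ (convolving the Fourier series of $P$ with a suitable analytic kernel, using that $\mathcal{M}_{s,\alpha}$ and $\mathcal{M}_{s,\alpha}^+$ are Banach spaces, Lemma~\ref{completed}) so that $P^{(\nu)}\to P$ uniformly on $\T^n$, $[P^{(\nu)}]_{s,\alpha}^{D_0,\sigma_\nu}\lesssim\|P\|_{\mathcal{C}^\beta}$, and the increments satisfy $[P^{(\nu+1)}-P^{(\nu)}]_{s,\alpha}^{D_0,\sigma_{\nu+1}}\lesssim\sigma_\nu^{\beta}\|P\|_{\mathcal{C}^\beta}$; see Lemmas~\ref{PP} and \ref{l4.1}. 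Then I set $H^{(\nu)}=\sum_j\omega_jy_j+\langle\xi,N_0\eta\rangle+\varepsilon\langle\xi,P^{(\nu)}(\theta)\eta\rangle$ and pick KAM scales $\epsilon_\nu\downarrow 0$ obeying the quadratic law $\epsilon_{\nu+1}\approx\epsilon_\nu^{3/2}$, together with Fourier cut-offs $K_\nu$ and Melnikov thresholds $\kappa_\nu$ adapted to Hypothesis~A2.

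\textbf{The inductive step.} Suppose inductively that $\Phi^\nu=\Phi_1\circ\cdots\circ\Phi_\nu$, a composition of analytic near--identity symplectic maps each generated by a Hermitian $F_i$ and hence unitary on $L^2(\R^d)$, conjugates $H^{(\nu)}$ to $h_\nu+\langle\xi,P_\nu\eta\rangle$ with $h_\nu=\sum_j\omega_jy_j+\langle\xi,N_\nu\eta\rangle$ in normal form, $N_\nu$ block diagonal and $\varepsilon$--close to $N_0$, and $[P_\nu]_{s,\alpha}^{D_\nu,\sigma_\nu}\le\epsilon_\nu/2$. Writing $H^{(\nu+1)}=H^{(\nu)}+(H^{(\nu+1)}-H^{(\nu)})$ and applying $\Phi^\nu$, the extra term $(H^{(\nu+1)}-H^{(\nu)})\circ\Phi^\nu$ is, by Lemmas~\ref{daishu01} and \ref{daishu} together with the closeness of $\Phi^\nu$ to the identity, again a quadratic form whose $\mathcal{M}_{s,\alpha}$--norm on the smaller strip $\sigma_{\nu+1}$ is bounded by $\lesssim\varepsilon\,\sigma_\nu^{\beta}$; choosing $\epsilon_\nu$ so that this is $\le\epsilon_\nu/2$ (this is exactly where $\beta$ large and the super--geometric law $\sigma_{\nu+1}=\sigma_\nu^{3/2}$ enter, and is the content of Lemma~\ref{L4.5}) keeps the total perturbation at step $\nu+1$ of size $\le\epsilon_\nu$. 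One analytic KAM step of the type of Proposition~4.1 in \cite{GP} — solving the homological equation by truncating at Fourier order $K_\nu$, using the second Melnikov bound $|\langle k,\omega\rangle+\lambda_a-\lambda_b|\ge\kappa_\nu(1+|w_a-w_b|)$, and exponentiating the Hermitian solution — then yields $\Phi_{\nu+1}$, a new block--diagonal $N_{\nu+1}$ and a remainder $P_{\nu+1}$ of quadratic size $\le\epsilon_{\nu+1}/2$, valid for $\omega$ outside a set of measure $\lesssim K_\nu^{\gamma_1}\kappa_\nu^{\alpha_2}$. Summing these exclusions over $\nu$ and optimizing $K_\nu,\kappa_\nu,\epsilon_\nu$ against $\varepsilon$ produces the Cantor set $D_\varepsilon$ with $\mathrm{Meas}(D_0\setminus D_\varepsilon)\lesssim\varepsilon^{3\delta/(2+d/\alpha)}$.

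\textbf{Passage to the limit and regularity.} Since $P^{(\nu)}\to P$ on $\T^n$ and $\epsilon_\nu\to0$, the compositions $\Phi^\nu$ and the normal forms $N_\nu$ converge. Here one uses part~iv) of Lemma~\ref{daishu01} (not merely part~iii)) to get convergence of $\Phi^\nu$ and of the conjugated Hamiltonian in $\mathfrak{L}(Y_{s'})$ for \emph{every} $1\le s'\le\max\{s,1\}$ rather than only in the energy space, which is the improvement over \cite{GP}; this is Lemma~\ref{convergence04}. The limit $\Phi_\omega^\infty(\theta)=(\overline M_\omega(\theta)\,\cdot\,,M_\omega(\theta)\,\cdot\,)$ is unitary and symplectic by the Hermitian structure of each $F_i$, $N_\infty=\lim N_\nu\in\mathcal{NF}$ is block diagonal with $[N_\infty-N_0]_{s,\alpha}^{D_\varepsilon}\lesssim\varepsilon$, and $\Phi_\omega^\infty(\omega t)$ conjugates the true $\mathcal{H}$ to $\langle\xi_+,N_\infty\eta_+\rangle$. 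For the Hölder regularity of $\Phi_\omega^\infty$, the step estimates $\|\Phi^\nu-\Phi^{\nu-1}\|_{\mathfrak{L}(Y_{s'},Y_{s'+2\alpha})}\lesssim(\text{a power of }\sigma_\nu)$ on strips of width $\sigma_\nu\to0$ feed into the converse approximation lemma (Lemma~\ref{smoothinginverse}): a torus function admitting analytic approximants with such rates lies in $\mathcal{C}^\mu$ for $\mu\le\tfrac29\beta$, $\mu\notin\Z$, with $\|\Phi_\omega^\infty-id\|_{\mathcal{C}^\mu(\T^n,\mathfrak{L}(Y_{s'},Y_{s'+2\alpha}))}\lesssim\varepsilon^{\frac{3}{2\beta}(\frac29\beta-\mu)}$.

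\textbf{Main obstacle.} The delicate point is the simultaneous bookkeeping of four scales — the analyticity width $\sigma_\nu$ (forced to shrink as $\sigma_\nu^{3/2}$ so the analytic--approximation increment $\varepsilon\sigma_\nu^{\beta}$ beats the current KAM remainder), the remainder size $\epsilon_\nu$, the Melnikov threshold $\kappa_\nu$ and the Fourier cut-off $K_\nu$ — so that \emph{at once} (a) the moving--target error $(H^{(\nu+1)}-H^{(\nu)})\circ\Phi^\nu$ stays below $\epsilon_\nu/2$, (b) the quadratic KAM convergence survives the loss of analyticity, (c) the accumulated measure of excluded frequencies is $O(\varepsilon^{3\delta/(2+d/\alpha)})$, and (d) the reconstructed Hölder exponent of $\Phi_\omega^\infty$ reaches $\tfrac29\beta$. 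Threading all four constraints is what dictates the lower bound $\beta>\max\{9(2+\tfrac d\alpha)\tfrac{\gamma_1}{\gamma_2-24\delta},\,9n,\,12(d+1)\}$. A secondary technical difficulty is promoting the convergence of the transformations from $Y_1$ to all $Y_{s'}$ with $1\le s'\le\max\{s,1\}$, which forces one to carry the sharper $\mathcal{M}_{s,\alpha}^+$--class estimates (Lemma~\ref{daishu01}iv)) through every step rather than only the $\mathfrak{L}(\ell_t^2,\ell_{-t}^2)$ bound.
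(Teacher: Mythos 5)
Your proposal is correct and follows essentially the same route as the paper: Jackson--Moser--Zehnder smoothing of $P$ on strips shrinking as $\sigma_{\nu+1}=\sigma_\nu^{3/2}$, a moving-target KAM iteration in which the increment $(H^{(\nu+1)}-H^{(\nu)})\circ\Phi^{\nu}$ is absorbed via Lemma \ref{L4.5} and each step is closed by Proposition \ref{pro4.1}, and recovery of the $\mathcal{C}^\mu$ regularity of $\Phi_\omega^\infty$ from the analytic approximants via the Bernstein-type converse Lemma \ref{smoothinginverse}. The only minor imprecision is writing the per-step excluded measure as $K_\nu^{\gamma_1}\kappa_\nu^{\alpha_2}$ rather than $K_\nu^{\gamma_1}\kappa_\nu^{\gamma_2}$, which does not affect the argument.
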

\begin{Remark}
In fact, $\Phi_\omega^\infty(\theta)$ and its inverse are bounded operators from $Y_{1}$ into itself for any $s\geq 0$.
\end{Remark}
We prove Theorem \ref{MainTheorem} in Section \ref{s4}.

\section{Application to the Quantum Harmonic Oscillator--Proof of  Theorem \ref{quantumth}}
In this section we prove Theorem \ref{quantumth} as a corollary of Theorem \ref{MainTheorem}.
\subsection{Verification of the hypothesis}
\begin{Lemma}\label{aspt}
When $\lambda_a=w_a,\ a\in\mathcal{E},$ Hypothesis $\mathrm{A1}$ holds true with $c_0=c_1= c_2=1$.
\end{Lemma}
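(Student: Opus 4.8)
The plan is simply to substitute $\lambda_a = w_a$ into the two conditions constituting Hypothesis A1 and read off the constants. First I would address the asymptotics bound $c_1 w_a \geq \lambda_a \geq c_2 w_a$: with $\lambda_a = w_a$ this becomes an equality on both sides, so the choice $c_1 = c_2 = 1$ works, and these are genuine positive constants since for $a=(j,l)\in\mathcal{E}$ one has $w_a = j \in \widehat{\mathcal{E}} = \{d,d+2,d+4,\dots\}$, hence $w_a \geq d \geq 1 > 0$. Next I would check the spectral separation $|\lambda_a - \lambda_b| \geq c_0 |w_a - w_b|$; again $|\lambda_a - \lambda_b| = |w_a - w_b|$, so $c_0 = 1$ suffices. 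That exhausts the content of Hypothesis A1.

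I do not expect any obstacle here: the lemma merely records the tautological fact that when the diagonal part $N_0$ of the Hamiltonian is exactly the quantum harmonic oscillator, its eigenvalues $w_a$ satisfy the abstract asymptotics hypothesis with all three constants equal to one. This is precisely what is needed in order to invoke Theorem \ref{MainTheorem} for the Hamiltonian (\ref{hamit}) arising from (\ref{HOeq}); the remaining hypothesis, the second Melnikov/measure estimate (Hypothesis A2), requires genuine work and is handled separately in the subsequent part of this section, together with the verification that $P(\theta)\in\mathcal{C}^\beta(\T^n,\mathcal{M}_{s,\alpha})$ (Lemma \ref{L3.3}).
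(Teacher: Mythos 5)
Your verification is correct and is exactly the (essentially tautological) argument the paper intends: with $\lambda_a=w_a$ both inequalities of Hypothesis A1 become identities, so $c_0=c_1=c_2=1$, which is why the paper states Lemma \ref{aspt} without further proof. No gap here; the real work indeed lies in Lemma \ref{aspt02} and Lemma \ref{L3.3}, as you note.
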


As \cite{GP},
\begin{Lemma}\label{aspt02}
When $\lambda_a=w_a,\ a\in\mathcal{E},$ Hypothesis $\mathrm{A2}$ holds true with $D_0=[0,2\pi]^n$, $ \alpha_1=n+1,\ \alpha_2=1,\  \ c_3= c(n )$ and
$$\ D':=\{\omega\in[0,2\pi]^n\big|\ |\la k,\omega\ra+j|\geq\kappa(1+|j|), {\rm\ for\ all\ j\in\Z\ and\ k\in\Z^n\setminus\{0\}}\}.$$
\end{Lemma}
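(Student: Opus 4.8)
The plan is to reduce the assertion to a classical small-divisor measure estimate. Since here $\lambda_a=w_a$ for $a\in\mathcal{E}$ and every $w_a$ lies in $\widehat{\mathcal{E}}=\{d,d+2,d+4,\cdots\}\subset\Z$, for any $a,b\in\mathcal{E}$ the number $\lambda_a-\lambda_b=w_a-w_b$ is an integer $j$ and $|w_a-w_b|=|j|$. Hence the Melnikov inequality $|\la k,\omega\ra+\lambda_a-\lambda_b|\geq\kappa(1+|w_a-w_b|)$ demanded in Hypothesis A2 is, for each pair $a,b$, an instance of $|\la k,\omega\ra+j|\geq\kappa(1+|j|)$ with $j\in\Z$. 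So I would take $D'=D'(\kappa,K)$ to be the set of $\omega\in[0,2\pi]^n$ satisfying the inequalities in the displayed definition for all $j\in\Z$ and all $k\in\Z^n$ with $0<|k|\leq K$ (the truncation of the displayed set that is all Hypothesis A2 uses). Then the second Melnikov condition holds for every $\omega\in D'$, every $0<|k|\leq K$ and every $a,b\in\mathcal{E}$ by construction, and it only remains to bound ${\rm Meas}(D_0\setminus D')$.

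Next I would decompose the complement into resonant slabs: $D_0\setminus D'=\bigcup_{0<|k|\leq K}\bigcup_{j\in\Z}R_{k,j}$, where $R_{k,j}:=\{\omega\in[0,2\pi]^n\ |\ |\la k,\omega\ra+j|<\kappa(1+|j|)\}$. For fixed $k\neq0$ one has $|\la k,\omega\ra|\leq 2\pi|k|$ on $[0,2\pi]^n$ (recall $|k|=\sum_i|k_i|$), so if $R_{k,j}\neq\emptyset$ then $(1-\kappa)|j|<\kappa+2\pi|k|$, and since $\kappa<1/4$ this forces $|j|<c_0|k|$ for an absolute constant $c_0$; thus only $O(|k|)$ values of $j$ contribute. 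For each contributing $(k,j)$, $R_{k,j}$ lies in the preimage of an interval of length $2\kappa(1+|j|)$ under $\omega\mapsto\la k,\omega\ra$; projecting onto the coordinate $i$ with $|k_i|$ maximal, so $|k_i|\geq|k|/n$, gives
$$
{\rm Meas}(R_{k,j})\leq (2\pi)^{n-1}\,\frac{2n\kappa(1+|j|)}{|k|}.
$$

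Summing over $|j|<c_0|k|$ uses $\sum_{|j|<c_0|k|}(1+|j|)\leq c_1|k|^2$ with $c_1$ absolute, giving $\sum_j{\rm Meas}(R_{k,j})\leq c_2(n)\kappa|k|$; then summing over $k$ with $\#\{k\in\Z^n\ |\ |k|=m\}\leq c_n m^{n-1}$ for $m\geq1$ yields
$$
{\rm Meas}(D_0\setminus D')\leq c_2(n)\kappa\sum_{m=1}^{K}m\cdot c_n m^{n-1}\leq c(n)\kappa K^{n+1}.
$$
This is exactly the bound $c_3K^{\alpha_1}\kappa^{\alpha_2}$ with $\alpha_1=n+1$, $\alpha_2=1$, $c_3=c(n)$; moreover $D'$ is closed, being a finite intersection of closed half-space complements once the finitely many relevant $k$ (with $0<|k|\leq K$) and $j$ (with $|j|<c_0|k|$) are fixed. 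This is a standard Diophantine estimate, following the corresponding argument in \cite{GP}, so no serious obstacle arises; the only points needing care are showing that for each $k$ only $O(|k|)$ integers $j$ give a nonempty slab — which keeps the $j$-sum finite and tames the weight $1+|j|$ — and the elementary slab-measure bound obtained by projecting onto the dominant coordinate of $k$.
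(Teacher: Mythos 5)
Your proposal is correct and is exactly the ``straightforward computation'' the paper leaves implicit (citing that $w_a-w_b\in\Z$ and referring to \cite{GP}): reduce the Melnikov condition to integer shifts $j=w_a-w_b$, bound each resonant slab by projecting onto the dominant coordinate of $k$, note only $O(|k|)$ values of $j$ contribute, and sum to get $c(n)K^{n+1}\kappa$. You also correctly read $D'$ as its truncation to $0<|k|\leq K$ (which is all Hypothesis A2 requires, and which the measure estimate actually needs), so there is nothing to add.
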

\begin{proof}
 Since $w_a-w_b\in\Z$, it is obtained by a straightforward computation.
\end{proof}

\begin{Lemma}\label{L3.3}
Let $d\geq 1$. Suppose that the potential $V: \T^n\times \R^d\ni (\theta, x)\rightarrow \R$ is $(s,\beta)-$admissible. Then there exists $\alpha=\alpha(d,s)>0$(see (\ref{alpha})) such that
the matrix function $P(\theta)$ defined by
$$(P(\theta))_{a}^{b}=\int_{\R^d}V(\theta, x)\Phi_a(x)\Phi_b(x)dx,\qquad a,b \in \mathcal{E}, $$
belongs to ${\mathcal{C}^{\beta}}(\T^n, \mathcal{M}_{s,\alpha})$.
\end{Lemma}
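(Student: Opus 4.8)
The plan is to reduce the assertion $P(\theta)\in\mathcal{C}^\beta(\T^n,\mathcal{M}_{s,\alpha})$ to the corresponding statement for each fixed-regularity ``slice'' and then assemble the pieces. First I would recall the key estimate from \cite{GP}: for $V(\cdot)\in\mathcal{H}^s(\R^d)$ the matrix $Q(V)$ with entries $Q(V)_a^b=\int_{\R^d}V\Phi_a\Phi_b\,dx$ satisfies $|Q(V)|_{s,\alpha}\leq c\,\|V\|_{\mathcal{H}^s}$ for a suitable $\alpha=\alpha(d,s)>0$ (this is exactly Lemma~3.2 of \cite{GP}, whose proof uses the bilinear Hermite estimates and the condition $s>2(d-2)$ when $d\geq2$); call this map $\mathcal{Q}:\mathcal{H}^s(\R^d)\to\mathcal{M}_{s,\alpha}$. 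The crucial point is that $\mathcal{Q}$ is \emph{linear and bounded}: $\|\mathcal{Q}\|_{\mathfrak{L}(\mathcal{H}^s,\mathcal{M}_{s,\alpha})}=:C_0<\infty$. Since $P(\theta)=\mathcal{Q}(V(\theta,\cdot))$, the regularity of $\theta\mapsto P(\theta)$ is inherited directly from that of $\theta\mapsto V(\theta,\cdot)$ through composition with a fixed bounded linear operator.

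Next I would carry out the following steps. (1) Write $\beta=[\beta]+b$ with $0\leq b<1$. For each multi-index $\alpha$ with $|\alpha|\leq\beta$, the $(s,\beta)$-admissibility gives $\partial_\theta^\alpha V(\cdot,\cdot)\in\mathcal{C}^{b}(\T^n,\mathcal{H}^s(\R^d))$ (valued in the appropriate multilinear space for the outer arguments). (2) Because $\mathcal{Q}$ is linear and bounded and independent of $\theta$, it commutes with $\partial_\theta$: one checks directly from the integral formula that $\partial_\theta^\alpha\big(\mathcal{Q}(V(\theta,\cdot))\big)=\mathcal{Q}\big(\partial_\theta^\alpha V(\theta,\cdot)\big)$, the differentiation under the integral sign being justified by the uniform bound $|Q(\partial_\theta^\alpha V)|_{s,\alpha}\leq C_0\|\partial_\theta^\alpha V\|_{\mathcal{H}^s}$ together with dominated convergence / the standard difference-quotient argument in Banach spaces (this is the ``functional analysis'' input attributed to \cite{Ber}). (3) Therefore $\partial_\theta^\alpha P=\mathcal{Q}\circ\partial_\theta^\alpha V$, and for the H\"older seminorm,
\begin{equation*}
|\partial_\theta^\alpha P(z_1)-\partial_\theta^\alpha P(z_2)|_{s,\alpha}
=|\mathcal{Q}(\partial_\theta^\alpha V(z_1,\cdot)-\partial_\theta^\alpha V(z_2,\cdot))|_{s,\alpha}
\leq C_0\,\|\partial_\theta^\alpha V(z_1,\cdot)-\partial_\theta^\alpha V(z_2,\cdot)\|_{\mathcal{H}^s},
\end{equation*}
so dividing by $|z_1-z_2|^{b}$ and taking the supremum gives control by $C_0$ times the corresponding seminorm of $V$. (4) Summing over $|\alpha|\leq\beta$ yields $\|P\|_{\mathcal{C}^\beta(\T^n,\mathcal{M}_{s,\alpha})}\leq C_0\,\|V\|_{\mathcal{C}^\beta(\T^n,\mathcal{H}^s(\R^d))}<\infty$, which is the claim; the $2\pi$-periodicity in $\theta$ is clear since $V$ is periodic. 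Finally I would record the explicit value of $\alpha=\alpha(d,s)$ in equation~(\ref{alpha}) inherited from the $\mathcal{H}^s\to\mathcal{M}_{s,\alpha}$ estimate, distinguishing the cases $d=1$ (any $s\geq0$) and $d\geq2$ ($s>2(d-2)$).

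The main obstacle is really packaged inside step~(2) combined with re-proving the boundedness of $\mathcal{Q}$: one must verify that the bilinear Hermite-function estimates underlying Lemma~3.2 of \cite{GP} are purely \emph{spatial} and do not interact with the $\theta$-dependence, so that the $\theta$-derivatives can simply be pulled through the operator $\mathcal{Q}$ without loss. Once this separation of variables is made rigorous---i.e.\ once one knows $\mathcal{Q}$ is a fixed element of $\mathfrak{L}(\mathcal{H}^s(\R^d),\mathcal{M}_{s,\alpha})$---the rest is the soft and essentially formal observation that post-composition with a bounded linear operator preserves $\mathcal{C}^\beta$ regularity with values in a Banach space and does not increase the norm. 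I would also take a moment to confirm that $(\mathcal{M}_{s,\alpha},|\cdot|_{s,\alpha})$ being a Banach space (Lemma~\ref{completed}) is what legitimizes talking about $\mathcal{C}^\beta(\T^n,\mathcal{M}_{s,\alpha})$ in the first place, and that the multilinear target spaces $X_\alpha$ in Definition~\ref{Cbetaspace} behave well under composition with $\mathcal{Q}$ (they do, since $\mathcal{Q}$ acts only on the final $X$-slot).
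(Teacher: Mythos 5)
Your proposal is correct and follows essentially the same route as the paper: the key input is the bilinear Hermite estimate of Lemma~3.2 in \cite{GP}, which makes $W\mapsto\bigl(\int_{\R^d}W\Phi_a\Phi_b\,dx\bigr)_{a,b}$ a fixed bounded linear map $\mathcal{H}^s(\R^d)\to\mathcal{M}_{s,\alpha}$, and then the $\mathcal{C}^\beta$ regularity of $P(\theta)=\mathcal{Q}(V(\theta,\cdot))$ is inherited from that of $V$. The paper simply writes out your ``post-composition with a bounded linear operator preserves $\mathcal{C}^\beta$'' observation explicitly, proving by Taylor's theorem and induction that each Fr\'echet derivative $P^{(m)}$ equals $\mathcal{Q}$ applied to $V^{(m)}_z$ and then controlling the top-order H\"older seminorm, exactly as in your steps (2)--(4).
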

\begin{proof}We divide the proof into several steps.\\
\indent(a)\quad We show that
\begin{equation*}\label{3.3.1}
\sup_{\theta\in\R^n}|P(\theta)|_{s,\alpha}\leq C\left(d,s\right)\|V(\theta,\cdot)\|_{{\mathcal{C}^{\beta}}(\T^n, \mathcal{H}^s)}.
\end{equation*}
Recall that for $a,b\in \mathcal{E}$, $$(P(\theta))_{a}^{b}=\int_{\R^d}V(\theta, x)\Phi_a(x)\Phi_b(x)dx. $$
To estimate $|P(\theta)|_{s,\alpha}$ by definition, we turn to estimate
$$ \|P_{[a]}^{[b]}(\theta)\|=\sup_{\substack
{\Psi_a\left(x\right)\in E_{w_a},\ \Psi_b\left(x\right)\in E_{w_b},\\ {\|\Psi_a\left(x\right)\|_{L^2(\R^d)}}=\|\Psi_b\left(x\right)\|_{L^2(\R^d)}=1}}\int_{\R^d}V(\theta, x)\Psi_a(x)\Psi_b(x)dx. $$
From a similar proof in \cite{GP}(Lemma 3.2),
\begin{eqnarray*}
&&\left|\int_{\R^d} V \left(\theta,x\right) \Psi_a\left(x\right)\Psi_b\left(x\right)dx\right|\\
&\leq&\frac{C\left(d,s\right) }{\left(w_aw_b\right)^{\widetilde{\alpha}\left(p\right)/2}}\left(\frac{\sqrt{\min\left(w_a,w_b\right)}}{\sqrt{\min\left(w_a,w_b\right)}+|w_a-w_b|}\right)^{s/2}\| V\left(\theta,\cdot\right) \|_{\mathcal{H}^s\left(\R^d\right)}
\end{eqnarray*}
 where $\widetilde{\alpha}\left(p\right)= \frac{1}{12}$ if $d=1$ and $\widetilde{\alpha}\left(p\right)= \frac{1}{3p}\left(p\geq\frac{10}{3}\right) $ if $d=2$ and $\widetilde{\alpha}\left(p\right)=\frac{1}{2}\left(\frac{d}{3p}-\frac{d-2}{6}\right)>0$ if $d>2$ and $\frac{2(d+3)}{d+1}< p<\frac{2d}{d-2}$.
Set
\begin{equation}\label{alpha}
\alpha : = \frac{\widetilde{\alpha}\left(p\right)}{2}>0.
\end{equation}
It follows for $\theta\in\R^n$,
\begin{equation*}\label{}
 |P(\theta)|_{s,\alpha}\leq C\left(d,s\right)\|V(\theta,\cdot)\|_{ \mathcal{H}^s}\leq C\left(d,s\right)\|V(\theta,\cdot)\|_{{\mathcal{C}^{\beta}}(\T^n, \mathcal{H}^s)}.
\end{equation*}
(b)\quad  We show that $P(z)\in \mathcal{C}^0(\R^n, \mathcal{M}_{s,\alpha}).$
For any $z_1,z_2\in \R^n,\ a,b\in \mathcal{E}$,
$$(P(z_1)-P(z_2))_{a}^{b}=\int_{\R^d}(V(z_1, x)-V(z_2, x))\Phi_a(x)\Phi_b(x)dx. $$
A similar discussion as above tells us
\begin{equation*}\label{3.3.2}
 |P(z_1)-P(z_2)|_{s,\alpha}\leq C\left(d,s\right)\|V(z_1, \cdot)-V(z_2,\cdot)\|_{ \mathcal{H}^s}.
\end{equation*}
It follows that $ |P(z_1)-P(z_2)|_{s,\alpha}\rightarrow0$ as $z_1\rightarrow z_2$ by $V(z, \cdot)\in{\mathcal{C}^{\beta}}(\R^n, \mathcal{H}^s(\R^d)).$\\
(c)\quad We show that $P(z)$ is Fr\'echet differentiable at each $z_0\in\R^n$ and  for $h\in\R^n,\ a,b\in \mathcal{E}$,
$$(P'(z_0)h)_{a}^{b}=\int_{\R^d}\la V'_z(z_0, x),\ h\ra\Phi_a(x)\Phi_b(x)dx  $$ satisfying
\begin{equation}\label{3.3.3}
 \|P'(z_0)\|_{\mathfrak{L}(\R^n, \mathcal{M}_{s,\alpha})}\leq C(d,s)\|V(z,\cdot)\|_{{\mathcal{C}^{\beta}}(\R^n, \mathcal{H}^s)}.
\end{equation}
In fact, we define for $h\in\R^n,\ a,b\in \mathcal{E}$,
$$(\mathcal{A}h)_{a}^{b}:=\int_{\R^d}\la V'_z(z_0, x),\ h\ra\Phi_a(x)\Phi_b(x)dx.  $$
Clearly,  $\mathcal{A}$ is a linear map on $\R^n$. Since $\la V'_z(z_0, \cdot),\ h\ra\in \mathcal{H}^s(\R^d)$, from a similar discussion in (a) we obtain
\begin{equation*}\label{}
 |\mathcal{A}h|_{s,\alpha}\leq C\left(d,s\right)\| V'_z(z_0, \cdot)\|_{ \mathfrak{L}(\R^n, \mathcal{H}^s)}\|h\|\leq C\left(d,s\right)\|V(\theta,\cdot)\|_{{\mathcal{C}^{\beta}}(\R^n, \mathcal{H}^s)}\|h\|.
\end{equation*}
It follows that
\begin{equation*}\label{3.3.4}
 \|\mathcal{A}\|_{\mathfrak{L}(\R^n, \mathcal{M}_{s,\alpha})}\leq C(d,s)\|V(z,\cdot)\|_{{\mathcal{C}^{\beta}}(\R^n, \mathcal{H}^s)}.
\end{equation*}
\indent Next we show that
\begin{equation}\label{3.3.5}
 |P(z)-P(z_0)-\mathcal{A}(z-z_0)|_{s,\alpha}=o(|z-z_0|),\ z\rightarrow z_0.
\end{equation}
For $ a,b\in \mathcal{E}$,
$$(P(z)-P(z_0)-\mathcal{A}(z-z_0))_{a}^{b}=\int_{\R^d}\left(V(z,x)-V(z_0,x)- V'_z(z_0, x)(z-z_0)\right)\Phi_a(x)\Phi_b(x)dx.  $$
Note  $V(z, \cdot)\in{\mathcal{C}^{\beta}}(\R^n, \mathcal{H}^s(\R^d)),$ it follows from Taylor's theorem
\begin{equation}\label{3.3.6}
 \|V(z,\cdot)-V(z_0,\cdot)- V'_z(z_0, \cdot)(z-z_0)\|_{ \mathcal{H}^s}=o(|z-z_0|),\ z\rightarrow z_0.
 \end{equation}
 From a similar discussion as above we obtain
 \begin{equation*}\label{}
 |P(z)-P(z_0)-\mathcal{A}(z-z_0)|_{s,\alpha}\leq C\left(d,s\right)\|V(z,\cdot)-V(z_0,\cdot)- V'_z(z_0, \cdot)(z-z_0)\|_{ \mathcal{H}^s}.
\end{equation*}
Combining with (\ref{3.3.6}) we have (\ref{3.3.5}). Thus $P(z)$ is Fr\'echet differentiable on $z=z_0$  and $P'(z_0)=\mathcal{A}$ which satisfies (\ref{3.3.3}).\\
(d)\quad We show that  $P(z)\in \mathcal{C}^1(\R^n, \mathcal{M}_{s,\alpha}).$
Note that for any $z_1,z_2\in \R^n,\ a,b\in \mathcal{E}$,
$$((P'(z_1)-P'(z_2))h)_{a}^{b}=\int_{\R^d}\la V'_z(z_1, x)-V'_z(z_2,x),h\ra\Phi_a(x)\Phi_b(x)dx. $$
Thus
\begin{eqnarray*}\label{3.3.7}
 |(P'(z_1)-P'(z_2))h|_{s,\alpha}&\leq& C\left(d,s\right)\|\la V'_z(z_1, \cdot)-V'_z(z_2,\cdot),h\ra\|_{ \mathcal{H}^s}\\
  &\leq& C\left(d,s\right)\|  V'_z(z_1, \cdot)-V'_z(z_2,\cdot)\|_{ \mathfrak{L}(\R^n, \mathcal{H}^s)}\|h\|.
\end{eqnarray*}
It follows that
\begin{equation*}\label{}
 \|P'(z_1)-P'(z_2)\|_{\mathfrak{L}(\R^n, \mathcal{M}_{s,\alpha})}\leq C(d,s)\| V'_z(z_1, \cdot)-V'_z(z_2,\cdot)\|_{\mathfrak{L}(\R^n, \mathcal{H}^s)}.
\end{equation*}
It follows that $ \| V'_z(z_1, \cdot)-V'_z(z_2,\cdot)\|_{\mathfrak{L}(\R^n, \mathcal{H}^s)}\rightarrow0$ as $z_1\rightarrow z_2$ by $V(z, \cdot)\in{\mathcal{C}^{\beta}}(\R^n, \mathcal{H}^s(\R^d))$ which means that $ \|P'(z_1)-P'(z_2)\|_{\mathfrak{L}(\R^n, \mathcal{M}_{s,\alpha})}\rightarrow0$ as $z_1\rightarrow z_2$.\\
(e)\quad Inductively, we assume that $P(z)\in \mathcal{C}^m(\R^n, \mathcal{M}_{s,\alpha}),$ $m\leq \beta-1$, with
$$\left(P^{(m)}(z) (h_1,\cdots,h_m)\right)_{a}^{b}=\int_{\R^d}V_z^{(m)}(z,x) (h_1,\cdots,h_m)\Phi_a(x)\Phi_b(x)dx $$ satisfying
\begin{equation}\label{3.3.8}
 \|P^{(m)}(z)\|_{\mathfrak{L}_m(\R^n, \mathcal{M}_{s,\alpha})}\leq C(d,s)\|V(z,\cdot)\|_{{\mathcal{C}^{\beta}}(\R^n, \mathcal{H}^s)},
\end{equation}
where $\mathfrak{L}_m(\R^n, \mathcal{M}_{s,\alpha})$ denotes the multi-linear operator space $\mathfrak{L}(\underbrace{\R^n\times\cdots\times\R^n}_{m}, \mathcal{M}_{s,\alpha}).$
Then we show that
$P(z)\in \mathcal{C}^{m+1}(\R^n, \mathcal{M}_{s,\alpha})$  with
$$\left(P^{(m+1)}(z) (h_1,\cdots,h_{m+1})\right)_{a}^{b}=\int_{\R^d}V_z^{(m+1)}(z,x) (h_1,\cdots,h_{m+1})\Phi_a(x)\Phi_b(x)dx$$
and (\ref{3.3.8}) holds for $m$ replaced by $m+1$.
We follow the method  in steps (c) and (d), and divide the proof into two parts ($e_1$) and ($e_2$) respectively.\\
($e_1$)\quad We show that $P^{\left(m\right)}\left(z\right)$ is Fr\'echet differentiable on $z=z_0$ and  for $a,b\in \mathcal{E}$,
$$\left(P^{\left(m+1\right)}\left(z_0\right)(h_1,\cdots,h_{m+1})\right)_{a}^{b}=\int_{\R^d}  V^{\left(m+1\right)}_z\left(z_0, x\right)(h_1,\cdots,h_{m+1})\Phi_a\left(x\right)\Phi_b\left(x\right)dx  $$ with
\begin{equation}\label{3.3.9}
 \|P^{\left(m+1\right)}\left(z_0\right)\|_{\mathfrak{L}_{m+1}\left(\R^n, \mathcal{M}_{s,\alpha}\right)}\leq C\left(d,s\right)\|V\left(z,\cdot\right)\|_{{\mathcal{C}^{\beta}}\left(\R^n, \mathcal{H}^s\right)}.
\end{equation}
In fact, given $z\in\R^n$, we define for $h_1,\cdots,h_{m+1}\in\R^n,\ a,b\in \mathcal{E}$,
$$\left(\mathcal{B}(z)\left(h_1,\cdots,h_{m+1}\right)\right)_{a}^{b}:=\int_{\R^d}  V^{\left(m+1\right)}_z\left(z, x\right)\left(h_1,\cdots,h_{m+1}\right)\Phi_a\left(x\right)\Phi_b\left(x\right)dx.  $$
Clearly,  $\mathcal{B}\in \mathfrak{L}_{m+1}\left(\R^n, \mathcal{M}_{s,\alpha}\right)$. Since $V^{\left(m+1\right)}_z\left(z, \cdot\right)\in \mathfrak{L}_{m+1}\left(\R^n, \mathcal{H}^s\right)$, $\mathcal{B}$ is clearly multi-linear and $ V^{\left(m+1\right)}_z\left(z, \cdot\right)\left(h_1,\cdots,h_{m+1}\right)\in \mathcal{H}^s\left(\R^d\right)$. Combining with
\begin{equation*}\label{}
 \sup_{z\in\R^n}\|V^{\left(m+1\right)}_z\left(z\right)\|_{\mathfrak{L_{m+1}}\left(\R^n, \mathcal{H}^s\right)}\leq  \|V\left(z,\cdot\right)\|_{{\mathcal{C}^{\beta}}\left(\R^n, \mathcal{H}^s\right)}
\end{equation*}
and a similar discussion as above we obtain
\begin{eqnarray*}\label{}
 |\mathcal{B}(z)\left(h_1,\cdots,h_{m+1}\right)|_{s,\alpha}&\leq& C\left(d,s\right)\|V^{\left(m+1\right)}_z\left(z\right)\left(h_1,\cdots,h_{m+1}\right)\|_{ \mathcal{H}^s}\\
 &\leq&  C\left(d,s\right)\|V\left(\theta,\cdot\right)\|_{\mathcal{C}^{\beta}\left(\R^n, \mathcal{H}^s\right)}\|h_{1}\|\cdots \|h_{m+1}\|.
\end{eqnarray*}
It follows that
\begin{equation*}\label{3.3.10}
 \|\mathcal{B}\|_{\mathfrak{L}_{m+1}\left(\R^n, \mathcal{M}_{s,\alpha}\right)}\leq C\left(d,s\right)\|V\left(z,\cdot\right)\|_{{\mathcal{C}^{\beta}}\left(\R^n, \mathcal{H}^s\right)}.
\end{equation*}
Set $B=\mathcal{B}\left(z_0\right)$.
Next we show that
\begin{equation*}\label{3.3.11}
 |P^{\left(m\right)}\left(z\right)-P^{\left(m\right)}\left(z_0\right)-B\left(z-z_0\right)|_{s,\alpha}=o\left(|z-z_0|\right),\ z\rightarrow z_0.
\end{equation*}
For $ a,b\in \mathcal{E}$,
\begin{eqnarray*}\label{}
 &&\left(\left(P^{\left(m\right)}\left(z\right)-P^{\left(m\right)}\left(z_0\right)-B\left(z-z_0\right)\right)\left(h_1,\cdots,h_{m}\right)\right)_{a}^{b}\\
 &=&\int_{\R^d}\left(V^{\left(m\right)}_z\left(z,x\right)-V^{\left(m\right)}_z\left(z_0,x\right)- V^{\left(m+1\right)}_z\left(z_0, x\right)\left(z-z_0\right)\right)\left(h_1,\cdots,h_{m}\right)\Phi_a\left(x\right)\Phi_b\left(x\right)dx.  \end{eqnarray*}
Note  $V\left(z, \cdot\right)\in{\mathcal{C}^{\beta}}\left(\R^n, \mathcal{H}^s\left(\R^d\right)\right),$ it follows from Taylor's theorem
\begin{equation*}\label{3.3.12}
 \|V^{\left(m\right)}_z\left(z,\cdot\right)-V^{\left(m\right)}_z\left(z_0,\cdot\right)- V^{\left(m+1\right)}_z\left(z_0, \cdot\right)\left(z-z_0\right)\|_{\mathfrak{L_{m+1}}\left(\R^n, \mathcal{H}^s\right)}=o\left(|z-z_0|\right),\ z\rightarrow z_0.
 \end{equation*}
 From a similar discussion as in (c) we obtain
 \begin{equation*}\label{}
 |\left(P^{\left(m\right)}\left(z\right)-P^{\left(m\right)}\left(z_0\right)-B\left(z-z_0\right)\right)\left(h_1,\cdots,h_{m}\right)|_{s,\alpha}\leq C\left(d,s\right)o\left(|z-z_0|\right)\|h_{1}\|\cdots \|h_{m}\|,\ z\rightarrow z_0.
\end{equation*}
Thus $P^{\left(m\right)}\left(z\right)$ is Fr\'echet differentiable  and $P^{\left(m+1\right)}\left(z \right)=\mathcal{B}$ which satisfies  (\ref{3.3.9}).\\
 ($e_2$)\quad Since $$ \| V^{\left(m+1\right)}_z\left(z_1, \cdot\right)-V^{\left(m+1\right)}_z\left(z_2,\cdot\right)\|_{\mathfrak{L}_{m+1}\left(\R^n, \mathcal{H}^s\right)}\rightarrow0\ {\rm as}\ z_1\rightarrow z_2$$  by $V\left(z, \cdot\right)\in{\mathcal{C}^{\beta}}\left(\R^n, \mathcal{H}^s\left(\R^d\right)\right)$. From a similar discussion in (d) we have
\begin{eqnarray}\label{3.3.13}
\nonumber && \|P^{\left(m+1\right)}\left(z_1\right)-P^{\left(m+1\right)}\left(z_2\right)\|_{\mathfrak{L}_{m+1}\left(\R^n, \mathcal{M}_{s,\alpha}\right)}\\
&\leq& C\left(d,s\right)\| V^{\left(m+1\right)}_z\left(z_1, \cdot\right)-V^{\left(m+1\right)}_z\left(z_2, \cdot\right)\|_{\mathfrak{L}_{m+1}\left(\R^n, \mathcal{H}^s\right)},
\end{eqnarray}
which means that $P\left(z\right)\in \mathcal{C}^{m+1}\left(\R^n, \mathcal{M}_{s,\alpha}\right)$ and
\begin{equation*}\label{}
 \|P^{\left(m+1\right)}\left(z\right)\|_{\mathfrak{L}_{m+1}\left(\R^n, \mathcal{M}_{s,\alpha}\right)}\leq C\left(d,s\right)\|V\left(z,\cdot\right)\|_{{\mathcal{C}^{\beta}}\left(\R^n, \mathcal{H}^s\right)}.
\end{equation*}
 (f)\quad Finally, consider $P\left(z\right)\in \mathcal{C}^{\beta}\left(\R^n, \mathcal{M}_{s,\alpha}\right)$. Denote   $b:=\beta-[\beta]$, we   show that for any  $1\leq l\leq[\beta]$ integer and $0<|z_1-z_2|<2\pi,$
\begin{equation}\label{3.3.14}
\frac{\|P^{\left(l\right)}\left(z_1\right)-P^{\left(l\right)}\left(z_2\right)\|_{\mathfrak{L}_{l}\left(\R^n, \mathcal{M}_{s,\alpha}\right)}}{{|z_1-z_2|^{b}}}\leq \|V\left(z,\cdot\right)\|_{{\mathcal{C}^{\beta}}\left(\R^n, \mathcal{H}^s\right)}.
\end{equation}
In fact, note that $V\left(z, \cdot\right)\in{\mathcal{C}^{\beta}}\left(\R^n, \mathcal{H}^s\left(\R^d\right)\right),$ from the definition we obtain that for $0<|z_1-z_2|<2\pi,$
$$\frac{\|V_z^{\left(l\right)}\left(z_1, \cdot\right)-V_z^{\left(l\right)}\left(z_2, \cdot\right)\|_{\mathfrak{L}_{l}\left(\R^n, \mathcal{M}_{s,\alpha}\right)}}{{|z_1-z_2|^{b}}}\leq \|V\left(z,\cdot\right)\|_{{\mathcal{C}^{\beta}}\left(\R^n, \mathcal{H}^s\right)}.$$
Thus (\ref{3.3.14}) holds  by (\ref{3.3.13}) for any $1\leq l\leq[\beta]$ which means that $P\left(z\right)\in \mathcal{C}^{\beta}\left(\R^n, \mathcal{M}_{s,\alpha}\right)$ and
$$\|P\left(\theta\right)\|_{{\mathcal{C}^{\beta}}\left(\T^n, \mathcal{M}_{s,\alpha}\right)}\leq C\left(d,s\right)\|V\left(z,\cdot\right)\|_{{\mathcal{C}^{\beta}}\left(\R^n, \mathcal{H}^s\right)}. $$
\end{proof}

\subsection{Proof of Theorem \ref{quantumth}}
Following the discussions stated in Subsection \ref{s1.1},
the Schr\"odinger equation (\ref{HOeq}) is equivalent to Hamiltonian system with (\ref{HOfun}). Expanding it on the Hermite basis $(\Phi_a)_{a\in\mathcal{E}}$,
it is equivalent to the system   governed by (\ref{hameq000})
 which reads as (\ref{hs00}), a special case of system (\ref{hameq00}) with $\lambda_a=w_a$ and $P(\omega t)$ satisfying (\ref{Pijform}). By
 Lemmas given above, if $V$ is $(s,\beta)-$admissable, we can apply Theorem \ref{MainTheorem}  to
  (\ref{hs00})  with $\gamma_1= n+d+2,\
\gamma_2=\frac{\alpha }{4+d+2\alpha }$ and $\alpha$ given by (\ref{alpha}),   this leads to Theorem \ref{quantumth}.\\
 \indent More precisely, in the new coordinates given by Theorem \ref{MainTheorem}, $(\xi,\eta)=(\overline{ {M}}_\omega\xi_+, {M}_\omega\eta_+)$,
 system (\ref{hs00}) becomes autonomous and decomposes into blocks as follows:
 \begin{eqnarray*}
\left\{\begin{array}{c}
\dot{\xi}_{+,[a]}=-\mathrm{i} (\overline{{N}}_{\infty})_{[a]}\xi_{+,{[a]}},\\
\dot{\eta}_{+,[a]}= \ \ \mathrm{i}(N_{\infty})_{[a]}\eta_{+,{[a]}},
\end{array}\right.\ \ \ a\in \mathcal{E},\label{inftyeq}
\end{eqnarray*}
where    $N_\infty=N_\infty(\omega)\in\mathcal{NF}$. Hence the solution start from $(\xi_+(0),\eta_+(0))$ is given by  $$(\xi_+(t),\eta_+(t))=(e^{-\mathrm{i}{t\overline{N}}_\infty }\xi_+(0),e^{ \mathrm{i}{t {N}}_\infty }\eta_+(0)),\ t\in\R.$$
Then the solution $u(t,x)$ of (\ref{HOeq}) corresponding to the initial data $u_0(x)=\sum_{a\in\mathcal{E}}\xi_a(0)\Phi_a(x)\in \mathcal{H}^{s'}$ with $1\leq s'\leq \max\{s,1\}$  is formulated by
$u(t,x)=\sum_{a\in\mathcal{E}}\xi_a(t)\Phi_a(x)$ with
\begin{eqnarray*}\label{xit}
\xi(t)=\overline{M}_\omega(\omega t)e^{-\mathrm{i}{t\overline{N}}_\infty }M^{T}_\omega(0)\xi(0),
\end{eqnarray*}
where we use the fact $(\overline{ {M}}_\omega)^{-1}=M^{T}_\omega.$\\
\indent In other words, let us define the transformation $\Psi_\omega(\theta)\in\mathfrak{L}(\mathcal{H}^{s'}),\ 0\leq s'\leq s,$ by
$$\Psi_\omega(\theta)(\sum_{a\in\mathcal{E}}\xi_a\Phi_a(x)):=\sum_{a\in\mathcal{E}}(M^T_\omega(\theta)\xi)_a\Phi_a(x)=\sum_{a\in\mathcal{E}}\xi_{+,a}\Phi_a(x).$$
From a straightforward computation(the proof is given in the Appendix), we have
\begin{Lemma}\label{psismooth}
For $0\leq s'\leq s$ and $\alpha>0$ given by (\ref{alpha}),
$$\|\Psi_\omega(\theta)-id\|_{\mathcal{C}^{\mu}(\T^n, \mathfrak{L}(\mathcal{H}^{s'},\mathcal{H}^{s'+2\alpha}))}\leq C \varepsilon^{\frac{3}{2\beta}(\frac{2}{9}\beta-\mu)},$$
where $\mu$ is defined in Theorem \ref{MainTheorem}.
\end{Lemma}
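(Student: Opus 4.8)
The plan is to transport the estimate on $\Phi_\omega^\infty(\theta)-id$ furnished by Theorem \ref{MainTheorem} from the sequence spaces $Y_{s'}$ to the physical spaces $\mathcal{H}^{s'}$, exploiting that the passage to Hermite coefficients is an isometry. Recall that $u=\sum_{a\in\mathcal{E}}\xi_a\Phi_a\mapsto\xi$ is a unitary isomorphism of $\mathcal{H}^{s'}$ onto $\ell^2_{s'}$ for every $s'\geq0$, by the definition of these norms, and that this identification does not depend on $\theta$. Under it $\Psi_\omega(\theta)$ becomes multiplication by the infinite matrix $M^T_\omega(\theta)$, so it is enough to establish
\[
\|M^T_\omega-I\|_{\mathcal{C}^{\mu}(\T^n,\mathfrak{L}(\ell^2_{s'},\ell^2_{s'+2\alpha}))}\leq C\varepsilon^{\frac{3}{2\beta}\left(\frac{2}{9}\beta-\mu\right)},\qquad 0\leq s'\leq s.
\]

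The first step is to observe that the proof of Theorem \ref{MainTheorem}(ii) actually produces more than the stated operator bound: the limit transformation is built as an infinite product of exponentials $e^{\mathrm{i}F}$ of Hermitian matrices lying in the $+$--class, so by (\ref{exponorm01})--(\ref{exponorm02}), the algebra property of Lemma \ref{daishu}, and the completeness of $\mathcal{M}_{s,\alpha}^+$ (Lemma \ref{completed}), one has $M_\omega(\theta)-I\in\mathcal{C}^{\mu}(\T^n,\mathcal{M}_{s,\alpha}^+)$ with $\|M_\omega-I\|_{\mathcal{C}^{\mu}(\T^n,\mathcal{M}_{s,\alpha}^+)}\leq C\varepsilon^{\frac{3}{2\beta}(\frac{2}{9}\beta-\mu)}$, the telescoping being controlled exactly as in the derivation of item (ii). The crucial point is then that the norm $|\cdot|_{s,\alpha+}$ is symmetric under the interchange $a\leftrightarrow b$ — the quantities $w_aw_b$, $|w_a-w_b|$ and $\min(w_a,w_b)$ all are — so $\mathcal{M}_{s,\alpha}^+$ is stable under transposition with $|A^T|_{s,\alpha+}=|A|_{s,\alpha+}$; since transposition is linear and commutes with $\partial_\theta$, the matrix $M^T_\omega(\theta)-I=(M_\omega(\theta)-I)^T$ lies in $\mathcal{C}^{\mu}(\T^n,\mathcal{M}_{s,\alpha}^+)$ with the same bound. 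Passing through the matrix class is what makes this work: transposition is harmless for $|\cdot|_{s,\alpha+}$, whereas for the operator norm between the weighted spaces $\ell^2_{s'},\ell^2_{s'+2\alpha}$ it would interchange positive and negative indices and the operator statement of Theorem \ref{MainTheorem} would no longer apply.

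Finally I would apply Lemma \ref{daishu01}(iv): for fixed $(\theta,\omega)$ and every $0\leq s'\leq s$ one has $\|(M^T_\omega(\theta)-I)\xi\|_{s'+2\alpha}\leq c(\alpha,s)\,|M^T_\omega(\theta)-I|_{s,\alpha+}\,\|\xi\|_{s'}$; using the same inequality for the derivatives $\partial_\theta^k(M^T_\omega-I)$, $|k|\leq[\mu]$, and for the $b$--H\"older quotient ($b=\mu-[\mu]$) of the top-order derivative gives $\|M^T_\omega-I\|_{\mathcal{C}^{\mu}(\T^n,\mathfrak{L}(\ell^2_{s'},\ell^2_{s'+2\alpha}))}\leq c(\alpha,s)\|M^T_\omega-I\|_{\mathcal{C}^{\mu}(\T^n,\mathcal{M}_{s,\alpha}^+)}$. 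Combining with the bounds of the previous paragraph and undoing the Hermite identification yields the lemma. The remaining work is pure bookkeeping — checking that $\mathcal{C}^{\mu}$-in-$\theta$ regularity survives the $\theta$--independent isometry and the uniform application of Lemma \ref{daishu01}(iv), so that no smallness is lost beyond the factor $\varepsilon^{\frac{3}{2\beta}(\frac{2}{9}\beta-\mu)}$ already produced by Theorem \ref{MainTheorem} — and the single step that genuinely requires thought rather than routine verification is the transposition argument that relates $M^T_\omega$ to $M_\omega$ inside the $+$--class.
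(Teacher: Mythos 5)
Your proposal is correct, and its skeleton --- reduce to a bound on $M^T_\omega-Id$ in $\mathcal{C}^{\mu}(\T^n,\mathfrak{L}(\ell^2_{s'},\ell^2_{s'+2\alpha}))$ and then transfer it through the $\theta$-independent Hermite isometry $u\leftrightarrow\xi$ --- is exactly the paper's. The difference is where the work is placed. The paper asserts at the outset that the bound on $M^T_\omega-Id$ is ``easy to show by definition'' from Theorem \ref{MainTheorem}, and then devotes steps (a)--(c) of its proof to the part you call pure bookkeeping: verifying, level by level (sup norm, H\"older quotient of order $b=\mu-[\mu]$, Fr\'echet differentiability of $z\mapsto\Psi_\omega(z)$, and induction on the order of derivatives), that the identification converts $\mathcal{C}^{\mu}$-regularity of $M^T_\omega-Id$ into $\mathcal{C}^{\mu}$-regularity of $\Psi_\omega-id$ with equal norms. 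You instead supply the step the paper leaves implicit: Theorem \ref{MainTheorem} as stated only controls $\overline{M}_\omega$ and $M_\omega$ as operators on the $Y_{s'}$ scale, and transposition does not preserve the operator norm of $\mathfrak{L}(\ell^2_{s'},\ell^2_{s'+2\alpha})$, so you return to the proof of the theorem (Lemmas \ref{L4.6}, \ref{App03} and the telescoping via Lemma \ref{smoothinginverse}) to get $M_\omega-Id\in\mathcal{C}^{\mu}(\T^n,\mathcal{M}_{s,\alpha}^+)$, exploit the $a\leftrightarrow b$ symmetry of $|\cdot|_{s,\alpha+}$ to pass to $M^T_\omega$, and only then descend to operator norms by Lemma \ref{daishu01}(iv). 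This is a legitimate and in fact more self-contained justification of the input the paper takes for granted; the price is that you compress the isometry-transfer argument, which is the only part the paper actually writes out and which does require the derivative-by-derivative verification of its steps (a)--(c) to be complete.
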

\indent  Moreover, $u(t,x)$ satisfies (\ref{HOeq}) if and only if  $v(t,x)=\Psi_\omega(\omega t)u(t,x)$ satisfies the autonomous equation:
$$i\partial_t v+(-\partial_{xx}+|x|^2)v+\varepsilon Wv=0,$$
where  $W(\sum_{a\in\mathcal{E}}\xi_{a}\Phi_a(x))=\frac{1}{\varepsilon}\sum_{a\in\mathcal{E}}((N_\infty-N_0)^T\xi)_a\Phi_a(x)$.
 Denote by $(W_a^b)_{a,b\in\mathcal{E}}$ the infinite matrix
 of the operator $W$ written in the Hermite basis($W_a^b=\int_{\R^d} W\Phi_a\Phi_bdx$), then $W $ is block diagonal. Denote
$\la V \ra(x):=\frac{1}{(2\pi)^n}\int_{\T^n}V(\theta,x)d\theta$ the mean value of $V$ on the torus with $(\la V \ra_a^b)_{a,b\in\mathcal{E}}$ the corresponding infinite matrix where
$$\la V \ra_a^b= \int_{\R^d} \la V \ra \Phi_a\Phi_b  dx =\frac{1}{(2\pi)^n}\int_{\T^n}P_a^b(\theta)d\theta=(P_a^b)^0$$
with $P_a^b(\theta)=\sum_{k\in\Z^n}(P_a^b)^k e^{ik\theta}.$  From (\ref{ntilde}) with $\nu=0$,  $ (\widetilde{N}_0)_{[a]} =\frac{\varepsilon}{(2\pi)^n}\int_{\T^n}  P^{(1)}  _{[a]}(\theta)d\theta.$ It holds that
 \begin{eqnarray*}\label{}
\|(\widetilde{N}_0)_{[a]}-\varepsilon P_{[a]}^0\|&=& \frac{\varepsilon}{(2\pi)^n}\left\|\int_{\T^n}(P^{(1)}(\theta)-P(\theta))_{[a]}d\theta\right\|\\
&\leq& c\varepsilon w_a^{-2\alpha}[P^{(1)}(\theta)-P(\theta)]^{\T^n,D_\varepsilon}_{s,\alpha}\\
&\leq& c\varepsilon w_a^{-2\alpha}{\sigma_1}^{\beta}= cw_a^{-2\alpha}\varepsilon \epsilon_1
\end{eqnarray*}
by Lemma \ref{L4.7}. On the other hand, from Lemma \ref{convergence01}, $[N_\infty-N_1]^{\T^n,D_\varepsilon}_{s,\alpha}\leq   2\epsilon_1.$
Thus,
$$w_a^{ 2\alpha}\|(W-\la V \ra)_{[a]}\|\leq \frac{w_{a}^{2\alpha}}{\varepsilon}\|(N_\infty-N_0 -\widetilde{N}_{0 })_{[a]}\|+\frac{w_{a}^{2\alpha}}{\varepsilon}\|(\widetilde{N}_{0 }-\varepsilon P ^0)_{[a]}\| \leq c  \varepsilon^{\frac12}. $$
Therefore,
$$\|(W_{a}^b)_{a,b\in\mathcal{E}}-\Pi(\la V \ra_{a}^b)_{a,b\in\mathcal{E}}\|_{\mathfrak{L}(\ell_{s'}^2)}\leq c\varepsilon^{\frac12}$$
 for $0\leq s'\leq s$, where $\Pi$ is the projection on the diagonal blocks.\\
\indent The proofs of Corollary \ref{coro01} and  \ref{coro02} are similar as \cite{GP}, we omit it for simplicity.\\

 \section{Proof of  Theorem \ref{MainTheorem}}\label{s4}
\indent In this section we will use a universal constant $C$ to simplify the proof, which depends on $n,\beta,d,s$ and is changing in the context. \\
\indent The system (\ref{hameq00})  is equivalent to the autonomous system:
\begin{eqnarray}
\left\{\begin{array}{c}
\dot{\xi}_a=-\mathrm{i}\lambda_a\xi_a-\mathrm{i}\varepsilon (P^T(\theta)\xi)_a,\\
\dot{\eta}_a=\ \  \mathrm{i}\lambda_a\eta_a+\mathrm{i}\varepsilon (P(\theta)\eta)_a,\ \ \\
\dot{y}= - \varepsilon\la\xi,   \nabla_{\theta} P(\theta)\eta\ra, \\
\dot{\theta}=\ \omega,\qquad\qquad\qquad\qquad\
\end{array} a\in\mathcal{E}\right.\label{autohs}
\end{eqnarray}
with
 the Hamiltonian
\begin{equation}\label{autoH}
\mathcal{H}(\theta,y,\xi,\eta,\omega)= \sum\limits_{j=1}^n\omega_jy_j+\sum\limits_{a\in\mathcal{E}} \lambda_a\xi_a\eta_a+ \varepsilon\la\xi,  P(\theta)\eta\ra
\end{equation}
in the extended phase space $\mathcal P_s:=\T^n\times \R^{n}\times Y_s$,
 and $\lambda_{a} $   satisfies  Hypothesis $\mathrm{A1}-\mathrm{A2}$.

\subsection{Analytic approximation to a ${\mathcal{C}^{\beta}}$ smooth   function} \indent The ${\mathcal{C}^{\beta}}$ smooth Hamiltonian function (\ref{autoH}) can be approximated by a series of  \emph{analytic} Hamiltonians
\begin{equation*}\label{autoHnu}
H^{(\nu)}(\theta,y,\xi,\eta,\omega)= \sum\limits_{j=1}^n\omega_jy_j+\sum\limits_{a\in\mathcal{E}} \lambda_a\xi_a\eta_a+ \varepsilon\la\xi, P^{(\nu)}(\theta)\eta\ra,\  \nu=1,2,\cdots,
\end{equation*}
where $P^{(\nu)}(\theta)$ will be given in the following. In order to extend the $\mathcal{C}^{\beta}$ function to a complex neighborhood of $\T^n$, we need
the famous results. \\
\begin{Lemma}(Jackson, Moser and Zehnder)\label{smoothing}
Let $X$ be a complex Banach space and  $f\in {\mathcal{C}^{\beta}}(\R^n, X)$ for some $\beta>0$ with finite ${\mathcal{C}^{\beta}}(\R^n, X)$ norm. Let $\phi$ be a radical - symmetric, $\mathcal{C}^{\infty}$ function, having as support the closure of the unit ball
centered at the origin, where $\phi$ is completely flat and take value $1$, let $K=\hat{\phi}$ be its Fourier transform. For all $\sigma>0$ define
$$S_{\sigma}f(z,\cdot)=K_{\sigma}\ast f=\frac{1}{\sigma^n}\int_{\R^n}K(\frac{z-y}{\sigma})f(y, \cdot)dy.$$
Then there exists a constant $C>0$ depending on $\beta, n$ and $X$ such that the following holds:
for any $\sigma>0$, the function $f_{\sigma}(z)$ is a real analytic function from $\C^n$ to $X$ such that if
$$\Delta_{\sigma}^{n} : = \left\{z\in \C^n\left| \right.  |\Im z_j|\leq \sigma, 1\leq j\leq n\right\}, $$
then for any $\nu\in \N^n$ such that $|\nu|\leq \beta$ one has
\begin{eqnarray*}\label{smoothing1.1}
\sup\limits_{z\in \Delta_{\sigma}^n}\left\|\partial^{\nu}f_{\sigma}(z)-\sum\limits_{|k|\leq \beta-|\nu|}\frac{\partial^{k+\nu}f(\Re z)}{k!}({\rm i}\Im z)^{k}\right\|_{X_{\nu}}\leq C\|f\|_{{\mathcal{C}^{\beta}}(\R^n, X)}\sigma^{\beta-|\nu|},
\end{eqnarray*}
and for all $0\leq \sigma_1\leq \sigma$,
$$\sup\limits_{z\in \Delta_{\sigma_1}^n}\|\partial^{\nu}f_{\sigma}(z)-\partial^{\nu}f_{\sigma_1}(z)\|_{X_{\nu}}\leq C\|f\|_{{\mathcal{C}^{\beta}}(\R^n, X)}\sigma^{\beta-|\nu|}.$$
The function $f_{\sigma}$ preserves periodicity(i.e. if $f$ is $T-$periodic in any of its variables $z_j$, so is $f_{\sigma}$).
\end{Lemma}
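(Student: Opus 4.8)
The plan is to reduce the vector-valued statement to the scalar case by pairing with functionals, and then to follow the classical convolution-with-an-analytic-mollifier argument. First I would record the basic properties of the kernel $K=\hat\phi$: since $\phi$ is $\mathcal{C}^\infty$ with compact support and is flat equal to $1$ near the origin, $K$ is real analytic on all of $\C^n$, extends to an entire function with $\int_{\R^n}K(y)\,dy=\phi(0)=1$, satisfies the moment conditions $\int_{\R^n}y^k K(y)\,dy=0$ for all multi-indices $1\le|k|\le\beta$ (these come from the flatness of $\phi$ at the origin, since $\int y^kK\,dy$ is a constant multiple of $\partial^k\phi(0)$), and has rapidly decaying bounds of the form $|K(y+\mathrm{i}w)|\le C_m (1+|y|)^{-m}e^{c|w|}$ on every strip $|\Im z|\le\sigma$. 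With these in hand, $f_\sigma(z)=\sigma^{-n}\int_{\R^n}K((z-y)/\sigma)f(y,\cdot)\,dy$ is, for each fixed $\sigma>0$, a Bochner integral of an $X$-valued function against an entire scalar kernel; differentiating under the integral sign and using Morera/Cauchy on each complex variable shows $f_\sigma:\C^n\to X$ is real analytic, and the decay bounds make the integral converge absolutely and locally uniformly on $\Delta_\sigma^n$. Periodicity is immediate: if $f$ is $T$-periodic in $z_j$, the substitution $y_j\mapsto y_j+T$ in the integral shows $f_\sigma$ is too.

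Next I would prove the two estimates. For the first, fix $\nu$ with $|\nu|\le\beta$ and $z=x+\mathrm{i}w\in\Delta_\sigma^n$. Writing $\partial^\nu f_\sigma(z)=\sigma^{-n}\int\partial^\nu\big(K((z-y)/\sigma)\big)f(y)\,dy$ and integrating by parts to move the $\nu$ derivatives onto $f$ — legitimate because $\partial^\nu f\in\mathcal{C}^{b}(\R^n,X_\nu)$ and $K$ decays — gives $\partial^\nu f_\sigma(z)=\sigma^{-n}\int K((z-y)/\sigma)\,\partial^\nu f(y)\,dy$. Now substitute $y=x+\sigma u$ to obtain $\int_{\R^n}K(u-\mathrm{i}w/\sigma)\,\partial^\nu f(x+\sigma u)\,du$, and Taylor-expand $\partial^\nu f(x+\sigma u)$ around $x$ to order $\beta-|\nu|$ with remainder controlled by the $\mathcal{C}^\beta$ norm. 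The polynomial terms of degree $1\le|k|\le\beta-|\nu|$ are annihilated after we shift the contour of integration in $u$ back to the real axis (permissible by analyticity and decay of $K$) and use the vanishing moments $\int u^kK(u)\,du=0$; the degree-zero term reproduces $\sum_{|k|\le\beta-|\nu|}\frac{\partial^{k+\nu}f(x)}{k!}(\mathrm{i}w)^k$ via the same moment identities applied to the expansion of $K(u-\mathrm{i}w/\sigma)$ in powers of $\mathrm{i}w/\sigma$; and the Taylor remainder, of size $O(\|f\|_{\mathcal{C}^\beta}\,|\sigma u|^{\beta-|\nu|})$, is integrated against $|K(u-\mathrm{i}w/\sigma)|$, which — since $|w|\le\sigma$ so $|w/\sigma|\le1$ — is bounded by a fixed integrable function of $u$, yielding the claimed bound $C\|f\|_{\mathcal{C}^\beta(\R^n,X)}\sigma^{\beta-|\nu|}$. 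For the second estimate, given $0\le\sigma_1\le\sigma$ and $z\in\Delta_{\sigma_1}^n$, I would apply the first estimate to both $f_\sigma$ and $f_{\sigma_1}$ at the point $z$ (note $z\in\Delta_{\sigma_1}^n\subset\Delta_\sigma^n$, and $|\Im z|\le\sigma_1\le\sigma$, so the hypothesis $|w|\le\sigma$ needed above holds in both cases) and subtract: the two Taylor polynomials in $\mathrm{i}\Im z$ coincide, so the difference is bounded by $C\|f\|_{\mathcal{C}^\beta}(\sigma^{\beta-|\nu|}+\sigma_1^{\beta-|\nu|})\le 2C\|f\|_{\mathcal{C}^\beta}\sigma^{\beta-|\nu|}$.

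The main obstacle is bookkeeping rather than any deep difficulty: one must (i) justify the contour shifts in the $u$-integral uniformly for $|\Im z|\le\sigma$, which requires the exponential-times-polynomial decay bounds on $K$ on horizontal strips and an application of Cauchy's theorem together with the vanishing of boundary contributions at infinity, and (ii) make the moment-cancellation argument precise when the kernel is evaluated at the complex argument $u-\mathrm{i}w/\sigma$ rather than at $u$ — expanding $K(u-\mathrm{i}w/\sigma)$ in a finite Taylor series in the second slot and matching the resulting polynomial-in-$w$ coefficients against the polynomial part of the Taylor expansion of $\partial^\nu f$. The passage to the Banach-space valued setting adds nothing essential: every scalar estimate above is an inequality between norms of Bochner integrals, and one may either work directly with $X_\nu$-valued integrals (the triangle inequality for Bochner integrals suffices) or test against an arbitrary $\ell\in X_\nu^*$ with $\|\ell\|\le1$, apply the scalar result to $\ell\circ f$, and take the supremum. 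Finally I would remark that the real analyticity of $f_\sigma$ on $\Delta_\sigma^n$ together with the first estimate with $\nu=0$ shows $f_\sigma\to f$ uniformly on $\R^n$ as $\sigma\to0$, which is how the lemma will be used in constructing the approximating Hamiltonians $H^{(\nu)}$.
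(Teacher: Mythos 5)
The paper does not prove this lemma at all: it is quoted as the classical Jackson--Moser--Zehnder approximation theorem (the version used in \cite{Bam1}, \cite{YZ13}, and proved in Salamon's KAM notes \cite{Sal04}), so there is no in-paper argument to compare yours against. Judged on its own merits, your sketch is the standard proof and is essentially correct: the moment identities $\int y^kK(y)\,dy=0$ for $|k|\geq 1$ from the flatness of $\phi$ at the origin, the Paley--Wiener analyticity and strip decay of $K$, the substitution $y=x+\sigma u$, the Taylor expansion of $\partial^\nu f$ with H\"older remainder, the contour shift, and the Bochner-integral/duality reduction to the scalar case are exactly the right ingredients, and your derivation of the second estimate by subtracting two instances of the first is the standard route.

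One piece of bookkeeping in your middle paragraph is stated backwards and you should fix it before writing this up. After the contour shift $u\mapsto v+\mathrm{i}w/\sigma$ one has
\begin{equation*}
\int_{\R^n}(\sigma u)^k\,K\bigl(u-\tfrac{\mathrm{i}w}{\sigma}\bigr)\,du=\int_{\R^n}(\sigma v+\mathrm{i}w)^k K(v)\,dv=(\mathrm{i}w)^k,
\end{equation*}
since after the binomial expansion only the zeroth moment of $K$ survives. Hence the degree-$k$ Taylor term of $\partial^\nu f(x+\sigma u)$ is \emph{not} annihilated for $k\geq 1$; it contributes precisely $\frac{\partial^{k+\nu}f(x)}{k!}(\mathrm{i}w)^k$, and the target sum $\sum_{|k|\leq\beta-|\nu|}\frac{\partial^{k+\nu}f(\Re z)}{k!}(\mathrm{i}\Im z)^k$ is assembled from \emph{all} the polynomial terms, not reproduced by the degree-zero term alone (the degree-zero term only yields $\partial^\nu f(x)$, via $\int K=1$). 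Your closing remark about ``matching the polynomial-in-$w$ coefficients'' shows you have the right mechanism in mind, so this is a presentational error rather than a gap, but as written the cancellation claim is wrong. Two further small points: the Taylor expansion should be taken to order $[\beta]-|\nu|$ with the $\mathcal{C}^b$ H\"older remainder supplying the full power $\sigma^{\beta-|\nu|}$ when $\beta\notin\N$, and the normalization $\int K=1$ requires the Fourier convention under which inversion carries no factor of $(2\pi)^n$.
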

The same theorem was  also used in  \cite{Bam1},  \cite{YZ13}, etc. \\
\indent The converse statement of Lemma \ref{smoothing} holds only if $\mu$ is not an integer. A
classical version of this converse result is due to Bernstein and relates the differentiability
properties of a periodic function to quantitative estimates for an
approximating sequence of trigonometric polynomials. In the following lemma, we suppose $X$ be a complex Banach space as above.
\begin{Lemma}\label{smoothinginverse}Let $\ell\geq0$ and $n$ be a positive integer. Then there exists a constant $c=c(\ell,n)$ such that
if $f:\R^n\rightarrow X$ is the limit of a sequence of real analytic maps $f_\nu(x)$ in the strips $|\Im
x|\leq\sigma_\nu:=\sigma^{(\frac{3}{2})^\nu}$ with $0<\sigma\leq\frac{1}{4}$ and
\begin{eqnarray*}
f_0=0,\quad|f_\nu-f_{\nu-1}|_{X}\leq c\sigma_\nu^\ell
\end{eqnarray*}
for $|\Im x|\leq\sigma_\nu$, $\nu=1,2,\cdots,$ then $f\in \mathcal{C}^\mu(\R^n,X)$ for every $\mu\leq\ell$ which is not an integer and
\begin{eqnarray*}
|f|_{\mathcal{C}^{\mu}(\R^n, X)}\leq \frac{4c(\ell,n)}{\iota(1-\iota)}\sigma^{\ell-\mu},\ 0<\iota:=\mu-[\mu]<1.
\end{eqnarray*}
\end{Lemma}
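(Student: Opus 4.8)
The plan is to run the classical Bernstein--type argument on the telescoping series $f=\sum_{\nu\ge 1}g_\nu$, where $g_\nu:=f_\nu-f_{\nu-1}$; since $f_0=0$ one has $f_N=\sum_{\nu=1}^N g_\nu$, and each $g_\nu$ is real analytic on the complex strip $|\Im x|\le\sigma_\nu$ with $|g_\nu(x)|_X\le c\,\sigma_\nu^{\ell}$ there. As a complex polydisc of radius comparable to $\sigma_\nu$ around every real point sits inside that strip, Cauchy's integral formula gives
\[
\sup_{x\in\R^n}\bigl|\partial^\alpha g_\nu(x)\bigr|_X\;\le\;C(n,|\alpha|)\,c\,\sigma_\nu^{\,\ell-|\alpha|}\qquad\text{for every multi-index }\alpha .
\]
(This is legitimate for $X$-valued maps: Cauchy's formula is a Bochner integral, pointwise estimates follow by pairing with norming functionals via Hahn--Banach, and likewise the mean value inequality used below holds for $X$-valued $C^1$ maps.) Write $m:=[\mu]$ and $\iota:=\mu-m$; the hypothesis $\mu\notin\Z$ gives $\iota\in(0,1)$, which is exactly what keeps $\iota(1-\iota)$ away from $0$ and reflects the classical failure of the statement at integer smoothness. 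Since $\mu\le\ell$ we have $\ell-m\ge\iota>0$, and the law $\sigma_{\nu+1}=\sigma_\nu^{3/2}$ forces $\sigma_\nu$ to decrease doubly exponentially, so $\sum_\nu\sigma_\nu^{\ell-|\alpha|}<\infty$ for all $|\alpha|\le m$; hence $\sum_\nu\partial^\alpha g_\nu$ converges uniformly, $f\in\mathcal{C}^m(\R^n,X)$ with $\partial^\alpha f=\sum_\nu\partial^\alpha g_\nu$, and $\sup_{\R^n}|\partial^\alpha f|_X\le C\,c\sum_\nu\sigma_\nu^{\ell-m}$. It remains to bound the $\iota$-Hölder seminorm of $\partial^\alpha f$ for $|\alpha|=m$.

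Fix $x\ne y$ in $\R^n$ and put $h:=|x-y|\in(0,2\pi)$. If $h\ge\sigma$, bound $|\partial^\alpha f(x)-\partial^\alpha f(y)|_X\le 2\sup|\partial^\alpha f|_X\le C\,c\sum_\nu\sigma_\nu^{\ell-m}$ and divide by $h^\iota\ge\sigma^\iota$; summing the (doubly geometric) series gives the desired $\lesssim\sigma^{\ell-\mu}$. If $0<h<\sigma$, choose $N\ge 0$ with $\sigma_{N+1}\le h<\sigma_N$ and split
\[
\partial^\alpha f(x)-\partial^\alpha f(y)=\bigl(\partial^\alpha f_N(x)-\partial^\alpha f_N(y)\bigr)+\sum_{\nu>N}\bigl(\partial^\alpha g_\nu(x)-\partial^\alpha g_\nu(y)\bigr).
\]
The high-frequency tail is estimated by the triangle inequality and the Cauchy bounds by $C\,c\sum_{\nu>N}\sigma_\nu^{\ell-m}$, a series whose leading term is $\sigma_{N+1}^{\ell-m}\le h^{\ell-m}\le h^\iota\sigma^{\ell-\mu}$ (using $h<\sigma$ and $\ell-m-\iota=\ell-\mu\ge 0$). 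The low-frequency part is handled by the mean value inequality along $[x,y]$ together with the Cauchy bound on the derivatives of order $m+1$ of $g_\nu$, giving $C\,c\,h\sum_{\nu\le N}\sigma_\nu^{\ell-m-1}$; since $h<\sigma_N\le\sigma_\nu$ for $\nu\le N$, trading a factor $h^{1-\iota}$ for $\sigma_\nu^{1-\iota}$ in each summand turns it into $\sigma_\nu^{\ell-\mu}$, so this part is $\le C\,c\,h^\iota\sum_{\nu\le N}\sigma_\nu^{\ell-\mu}$. Summing the two one-sided series by means of the doubly exponential decay of $\sigma_\nu$ and $\sigma\le\tfrac14$ yields $|\partial^\alpha f(x)-\partial^\alpha f(y)|_X\le\frac{4c(\ell,n)}{\iota(1-\iota)}\,h^\iota\sigma^{\ell-\mu}$, which is the assertion.

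The real work is in that last summation: obtaining not merely \emph{some} finite Hölder norm but precisely the dependence $\iota^{-1}(1-\iota)^{-1}\sigma^{\ell-\mu}$ forces one to match the cutoff $N$ to $h$ sharply and to treat separately the two borderline regimes near the neighbouring integers---$\ell-m$ close to $1$, and $\ell-\mu$ close to $0$---where the naive geometric-ratio bound degenerates; there one must instead dominate the relevant series by its extremal term, using that near the cutoff $\sigma_\nu$ is comparable to $h$ (possibly far smaller than $\sigma$) so that the ratios are genuinely small. Everything else---the telescoping, the Cauchy estimates, the $\mathcal{C}^m$ convergence, the $X$-valued calculus, and the range $h\ge\sigma$---is routine, and the assumption $\mu\notin\Z$ enters only through the nondegeneracy $\iota(1-\iota)>0$.
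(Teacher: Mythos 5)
Your proposal is the same classical Bernstein-type argument the paper uses: telescope $f=\sum_\nu (f_\nu-f_{\nu-1})$, apply Cauchy estimates on the strips, and prove the H\"older bound by splitting at a cutoff $N$ with $\sigma_{N+1}\le |x-y|<\sigma_N$, treating $\nu\le N$ by the mean value inequality and $\nu>N$ by the sup bound. The organizational difference is that you run this directly for general $\mu\le\ell$ with $m=[\mu]$ derivatives, whereas the paper (following Salamon) first reduces everything to the single case $0<\mu=\ell<1$: the case $\mu<\ell$ is absorbed by replacing $c$ with $c\,\sigma^{\ell-\mu}$ (since $\sigma_\nu^{\ell}\le\sigma^{\ell-\mu}\sigma_\nu^{\mu}$), which is exactly where the factor $\sigma^{\ell-\mu}$ in the statement comes from, and $\ell>1$ is reduced to $\ell<1$ by Cauchy estimates on derivatives. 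This reduction keeps all exponents inside $(0,1)$ and completely avoids the mixed-exponent sums in which your ``borderline regimes'' ($\ell-m$ near $1$, $\ell-\mu$ near $0$) arise; your direct route can be made to work, but those cases then genuinely have to be argued, not just acknowledged.

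The one step of your write-up that does not survive as stated is the low-frequency estimate: trading $h^{1-\iota}$ for $\sigma_\nu^{1-\iota}$ gives $C\,c\,h^{\iota}\sum_{\nu\le N}\sigma_\nu^{\ell-\mu}$, and in the central case $\mu=\ell$ (precisely the case the paper reduces to, where $\sigma^{\ell-\mu}=1$) this sum equals $N$, which is unbounded as $h\to 0$; for $\ell-\mu$ small it yields a constant of order $1/(\ell-\mu)$ rather than $1/(1-\iota)$. So that displayed bound cannot give the stated estimate. You do flag this and name the correct remedy, and it is exactly the paper's computation: keep the sum $|x-y|\sum_{1\le\nu\le N}\sigma_\nu^{\iota-1}$ and dominate it by its last term, using $\sigma_N=\sigma_{N-k}^{(3/2)^k}$ and $\sigma\le\frac14$ to get $\sigma_{N-k}^{\iota-1}\le 2^{-k(1-\iota)}\sigma_N^{\iota-1}$, hence a bound $\frac{4c}{1-\iota}|x-y|\,\sigma_N^{\iota-1}\le\frac{4c}{1-\iota}|x-y|^{\iota}$, while the tail gives $\frac{4c}{\iota}\sigma_{N+1}^{\iota}\le\frac{4c}{\iota}|x-y|^{\iota}$. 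Since you only describe this fix rather than carry it out, the critical summation --- the heart of the lemma --- remains a sketch; once it is written out (or once you adopt the Salamon reduction, which sidesteps the issue), your argument is complete and coincides with the paper's.
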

For the proof see section \ref{appendix}.\\
\indent For our applications we choose $f(z) =  P(z)$, $X=(\mathcal{M}_{s,\alpha},|\cdot |_{s,\alpha})$. From Lemma \ref{smoothing}
we denote
\begin{equation}\label{moguangpz}
S_{\sigma}P(z) = \sigma^{-n}\int_{\R^n}K(\frac{z-y}{\sigma})P(y)dy,\ z\in \Delta_{\sigma}^{n}.
\end{equation}
From Lemma \ref{smoothing} again we have that for any $\sigma>0$, $S_{\sigma}P(z)$ is a real analytic function from $\C^n$ to $\mathcal{M}_{s,\alpha}$ such that
  for any $k\in \N^n$ satisfying $|k|\leq \beta$, one has
\begin{eqnarray}\label{smoothing1.1}
\sup\limits_{z\in \Delta_{\sigma}^n}\|\partial^{k}S_{\sigma}P(z)-\sum\limits_{|m|\leq \beta-|k|}\frac{\partial^{m+k}P(\Re z)}{m!}({\rm i}\Im z)^{m}\|_{X_{k}}\leq C\|P(\theta)\|_{{\mathcal{C}^{\beta}}(\T^n, \mathcal{M}_{s,\alpha})}\sigma^{\beta-|k|},
\end{eqnarray}
and for all $0\leq \sigma'\leq \sigma $,
\begin{eqnarray}\label{smoothing1.2}
\sup\limits_{z\in \Delta_{\sigma'}^n}\|\partial^{k}S_{\sigma }P(z)-\partial^{k}S_{\sigma'}P(z)\|_{X_{k}}\leq C\|P(\theta)\|_{{\mathcal{C}^{\beta}}(\T^n, \mathcal{M}_{s,\alpha})}\sigma ^{\beta-|k|}.
\end{eqnarray}
\begin{Remark}
Since $S_{\sigma}P(z)$ preserves periodicity, we often  write $S_{\sigma}P(\theta)$ instead of $S_{\sigma}P(z)$.  Recall $P(\theta)$ is Hermitian and, from \cite{Sal04}, $K(\R^n)\in \R$,  then $S_{\sigma}P(\theta)$ is also Hermitian when $\theta\in \T^n$ by (\ref{moguangpz}).
\end{Remark}
Suppose $0<\cdots<\sigma_{\nu}<\cdots<\sigma_{1}<\sigma_{0}.$ Then we can construct a series of analytic functions $\{S_{\sigma_\nu}P(\theta),\ \theta\in \T_{\sigma_\nu}^{n}\}_{\nu\in\N}$. From (\ref{smoothing1.2}) we have
 \begin{Lemma}\label{PP001}
 For $ |\Im \theta|\leq\sigma_{\nu }$, $\nu=1,2,\cdots,$
\begin{eqnarray*}
|S_{\sigma_{\nu }}P (\theta)-S_{\sigma_{\nu-1}}P(\theta)|_{s,\alpha}\leq
C \sigma_{\nu-1}^{\beta}\|P(\theta)\|_{\mathcal{C}^{\beta}(\T^n, \mathcal{M}_{s,\alpha})}\leq C \sigma_{\nu-1}^{\beta} .
\end{eqnarray*}
\end{Lemma}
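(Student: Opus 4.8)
The plan is to deduce this estimate immediately from the second bound in the smoothing lemma (Lemma \ref{smoothing}), applied with $f=P$ and $X=(\mathcal{M}_{s,\alpha},|\cdot|_{s,\alpha})$ — that is, from inequality (\ref{smoothing1.2}) — so that the only work is to match the parameters correctly.

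First I would specialize (\ref{smoothing1.2}) to $k=0$, so that $\partial^{k}S_{\sigma}P = S_{\sigma}P$ and the exponent $\beta-|k|$ becomes $\beta$. Next I would identify the larger convolution radius with $\sigma_{\nu-1}$ and the smaller one with $\sigma_{\nu}$: since the radii are arranged as $0<\cdots<\sigma_{\nu}<\sigma_{\nu-1}<\cdots<\sigma_{0}$, the hypothesis $\sigma'\leq\sigma$ of (\ref{smoothing1.2}) holds with $\sigma'=\sigma_{\nu}$, $\sigma=\sigma_{\nu-1}$. Taking the supremum over the strip $\Delta_{\sigma_{\nu}}^{n}$, which by the sup-norm convention is exactly $\{\theta:\ |\Im\theta|\leq\sigma_{\nu}\}$, then yields
\[
|S_{\sigma_{\nu}}P(\theta)-S_{\sigma_{\nu-1}}P(\theta)|_{s,\alpha}\leq C\,\|P(\theta)\|_{\mathcal{C}^{\beta}(\T^n,\mathcal{M}_{s,\alpha})}\,\sigma_{\nu-1}^{\beta}
\]
for all such $\theta$, which is the first inequality of the statement. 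The second inequality then follows by absorbing the finite quantity $\|P(\theta)\|_{\mathcal{C}^{\beta}(\T^n,\mathcal{M}_{s,\alpha})}$ into the constant $C$; this norm is controlled by the $(s,\beta)$-admissibility of $V$ through Lemma \ref{L3.3}, or, in the abstract framework of Theorem \ref{MainTheorem}, it is simply part of the standing hypothesis $P(\theta)\in\mathcal{C}^{\beta}(\T^n,\mathcal{M}_{s,\alpha})$.

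There is essentially no obstacle here: the entire content sits in Lemma \ref{smoothing}, and the steps above are pure bookkeeping. The only points that deserve a sentence of care are that one must take $k=0$ (any $|k|>0$ would weaken the exponent), that $\sigma_{\nu}$ and $\sigma_{\nu-1}$ enter in the correct order so the hypothesis of (\ref{smoothing1.2}) is satisfied, and that $S_{\sigma}P$ indeed takes values in $\mathcal{M}_{s,\alpha}$ with finite norm so that $|\cdot|_{s,\alpha}$ makes sense — the latter being precisely the real-analyticity assertion recorded just above (\ref{smoothing1.2}).
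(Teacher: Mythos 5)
Your proposal is correct and is exactly the paper's argument: the lemma is stated as an immediate consequence of the second estimate (\ref{smoothing1.2}) of the Jackson--Moser--Zehnder lemma with $k=0$, $\sigma=\sigma_{\nu-1}$, $\sigma'=\sigma_{\nu}$, and the $\mathcal{C}^{\beta}$ norm of $P$ absorbed into the constant via Lemma \ref{L3.3}. Nothing further is needed.
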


\begin{Lemma}\label{PP002}
For $ |\Im \theta|\leq\sigma_{0}\leq 1$,
\begin{eqnarray}\label{guanghua2-1}
|S_{\sigma_{0}}P (\theta ) |_{s,\alpha}\leq
C.
\end{eqnarray}
\end{Lemma}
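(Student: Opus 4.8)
The plan is to obtain the bound directly from the zeroth-order case of the Jackson--Moser--Zehnder estimate, i.e. from (\ref{smoothing1.1}) applied to $f=P$, $X=\mathcal{M}_{s,\alpha}$, with $k=0$ and $\sigma=\sigma_0$. Taking $k=0$ there gives, for every $\theta$ with $|\Im\theta|\leq\sigma_0$,
\[
\left|S_{\sigma_0}P(\theta)-\sum_{|m|\leq\beta}\frac{\partial^{m}P(\Re\theta)}{m!}(\mathrm{i}\,\Im\theta)^{m}\right|_{s,\alpha}\leq C\,\|P(\theta)\|_{\mathcal{C}^{\beta}(\T^n,\mathcal{M}_{s,\alpha})}\,\sigma_0^{\beta},
\]
so by the triangle inequality it suffices to bound the finite Taylor-type sum.

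To do that, I would estimate each summand by $\bigl|\tfrac{1}{m!}\partial^{m}P(\Re\theta)(\mathrm{i}\,\Im\theta)^{m}\bigr|_{s,\alpha}\leq\tfrac{1}{m!}\,|\partial^{m}P(\Re\theta)|_{s,\alpha}\,|\Im\theta|^{|m|}\leq\|P\|_{\mathcal{C}^{\beta}(\T^n,\mathcal{M}_{s,\alpha})}$, using that $|\Im\theta|\leq\sigma_0\leq1$ and that $\sup_z|\partial^{m}P(z)|_{s,\alpha}\leq\|P\|_{\mathcal{C}^{\beta}(\T^n,\mathcal{M}_{s,\alpha})}$ for every multi-index with $|m|\leq\beta$, which is part of Definition \ref{Cbetaspace}. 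Since the number of multi-indices $m\in\N^n$ with $|m|\leq\beta$ depends only on $n$ and $\beta$, the sum is $\leq C(n,\beta)\,\|P\|_{\mathcal{C}^{\beta}(\T^n,\mathcal{M}_{s,\alpha})}$. Combining this with the displayed error bound and $\sigma_0\leq1$ yields $|S_{\sigma_0}P(\theta)|_{s,\alpha}\leq C\,\|P\|_{\mathcal{C}^{\beta}(\T^n,\mathcal{M}_{s,\alpha})}$ on $|\Im\theta|\leq\sigma_0$.

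Finally I would invoke the standing hypotheses: by Lemma \ref{L3.3} together with the $(s,\beta)$-admissibility of $V$ (equivalently, by the hypothesis of Theorem \ref{MainTheorem} that $P\in\mathcal{C}^{\beta}(\T^n,\mathcal{M}_{s,\alpha})$ has finite norm), $\|P(\theta)\|_{\mathcal{C}^{\beta}(\T^n,\mathcal{M}_{s,\alpha})}\leq C$, so the right-hand side is an absolute constant depending only on $n,\beta,s,d$, as claimed in (\ref{guanghua2-1}). There is essentially no obstacle here: the statement is a restatement of the $k=0$ smoothing estimate, and the only point deserving a line of care is the uniform-in-$\theta$ control of the Taylor polynomial, which is immediate from $\sigma_0\leq1$ and the finiteness of the index set $\{m:|m|\leq\beta\}$.
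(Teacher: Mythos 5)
Your proposal is correct and follows essentially the same route as the paper: apply the $k=0$ case of the Jackson--Moser--Zehnder estimate (\ref{smoothing1.1}), bound the finite Taylor-type sum termwise using $\sup_z|\partial^{m}P(z)|_{s,\alpha}\leq\|P\|_{\mathcal{C}^{\beta}(\T^n,\mathcal{M}_{s,\alpha})}\leq C$ and $\sigma_0\leq 1$, and conclude by the triangle inequality. The only cosmetic difference is that you make explicit the counting of multi-indices and the invocation of Lemma \ref{L3.3}, which the paper leaves implicit in its constant $C$.
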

\begin{proof}
From (\ref{smoothing1.1}), for  $|\Im \theta|\leq\sigma_0\leq1$ we have
$
|S_{\sigma_0}P(\theta)-\sum\limits_{|k|\leq \beta}\frac{\partial^{k}P(\Re \theta)}{k!}({\rm i}\Im \theta)^{k}|_{s,\alpha}\leq
C.
$
Thus,
$
|\sum\limits_{|k|\leq \beta}\frac{\partial^{k}P(\Re \theta)}{k!}({\rm i}\Im \theta)^{k}|_{s,\alpha}\leq \sum_{|k|\leq \beta}|\partial^{k} P(\Re \theta)|_{s,\alpha}|\sigma_0^{|k|}\leq C.
$
Then we obtain (\ref{guanghua2-1}).
\end{proof}

\begin{Lemma}\label{L4.7}
For $\theta\in \T^n$, $|S_{\sigma_{\nu}}P(\theta)-P(\theta)|_{s,\alpha}\leq C\sigma_{\nu}^{\beta}$,\ $\nu=0,1,2,\cdots.$
\end{Lemma}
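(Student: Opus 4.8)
The plan is to read this off the Jackson--Moser--Zehnder smoothing lemma (Lemma \ref{smoothing}) by specializing it to real arguments. Recall that here $X=(\mathcal{M}_{s,\alpha},|\cdot|_{s,\alpha})$, $f=P$, and $S_{\sigma}P$ is the mollification (\ref{moguangpz}); by hypothesis $P(\theta)\in\mathcal{C}^{\beta}(\T^n,\mathcal{M}_{s,\alpha})$ has finite norm.

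First I would apply the first estimate of Lemma \ref{smoothing} (equivalently (\ref{smoothing1.1})) with the multi-index there taken to be $0$, which is allowed since $|0|=0\leq\beta$; this gives
$$\sup_{z\in\Delta_{\sigma}^{n}}\Big|S_{\sigma}P(z)-\sum_{|m|\leq\beta}\frac{\partial^{m}P(\Re z)}{m!}(\mathrm{i}\Im z)^{m}\Big|_{s,\alpha}\leq C\|P(\theta)\|_{\mathcal{C}^{\beta}(\T^n,\mathcal{M}_{s,\alpha})}\,\sigma^{\beta}.$$
Next I would restrict $z$ to the real torus, $z=\theta\in\T^n\subset\R^n$, so that $\Im z=0$. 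Every summand with $|m|\geq1$ then carries a factor $(\mathrm{i}\Im z)^{m}=0$ and drops out, leaving only the $m=0$ term $P(\Re\theta)=P(\theta)$. Hence
$$|S_{\sigma}P(\theta)-P(\theta)|_{s,\alpha}\leq C\|P(\theta)\|_{\mathcal{C}^{\beta}(\T^n,\mathcal{M}_{s,\alpha})}\,\sigma^{\beta}\leq C\sigma^{\beta},$$
where the last step absorbs the finite $\mathcal{C}^{\beta}$-norm of $P$ into the universal constant $C$. Taking $\sigma=\sigma_{\nu}$ for $\nu=0,1,2,\dots$ yields the claim.

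Since all the analytic work is packaged inside Lemma \ref{smoothing}, there is no genuine obstacle here; the only point needing a line of justification is the elementary observation that on the real subspace $\T^n$ the Taylor-correction sum in (\ref{smoothing1.1}) collapses to the single term $P(\theta)$, so that the left-hand side becomes exactly the approximation error $|S_{\sigma}P-P|_{s,\alpha}$.
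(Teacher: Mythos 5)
Your proposal is correct and is essentially identical to the paper's own proof: both apply the Jackson--Moser--Zehnder estimate (\ref{smoothing1.1}) with multi-index zero and then observe that on the real torus $\Im z=0$, so the Taylor-correction sum collapses to the single term $P(\theta)$. No gaps.
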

\begin{proof}
From Lemma \ref{smoothing}, we have for $|\Im z|\leq \sigma_{\nu}$,
\begin{eqnarray*}
|S_{\sigma_{\nu}}P(z)-\sum\limits_{|k|\leq \beta}\frac{\partial^{k}P(\Re z)}{k!}(i \Im z)^{k}|_{s,\alpha}\leq C\|P(\theta)\|_{{\mathcal{C}^{\beta}}(\T^n, \mathcal{M}_{s,\alpha})}\sigma_{\nu}^{\beta}\leq C\sigma_{\nu}^{\beta}.
\end{eqnarray*}
On the other hand, if $z\in \R^d$,
$\sum\limits_{0<|k|\leq \beta}\frac{\partial^{k}P(\Re z)}{k!}(i \Im z)^{k}=0$.
Thus for $\theta\in \T^d$ it follows
$|S_{\sigma_{\nu}}P(\theta)-P(\theta)|_{s,\alpha}\leq C\sigma_{\nu}^{\beta}$.
\end{proof}
In the following we will write $P^{(\nu)}(\theta) : = S_{\sigma_{\nu}}P(\theta)$ for simplicity. Combining with all the above lemmas, we have
\begin{Lemma}\label{PP}
\begin{eqnarray}
\nonumber
[P(\theta)]_{s,\alpha}^{D_0}&\leq&  C, \qquad \theta\in\T^n;\\ \nonumber
[P^{(0)}(\theta)]_{s,\alpha}^{D_0,\sigma_{0}}&\leq&  C;\\ \label{guanghua4}
[P^{(\nu)} (\theta)-P(\theta)]_{s,\alpha}^{D_0}&\leq&
C\sigma_\nu^{\beta},    \quad \theta\in \T^n;\\ \nonumber
[P^{(\nu+1)} (\theta)-P^{(\nu)} (\theta)]_{s,\alpha}^{D_0,\sigma_{\nu+1}}&\leq&  C\sigma_\nu^{\beta}.
\end{eqnarray}
\end{Lemma}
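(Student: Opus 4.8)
\emph{Proof strategy.} The statement of Lemma \ref{PP} is an assembly of the four preceding lemmas, so the plan is simply to rewrite them in the language of the norms $[\,\cdot\,]_{s,\alpha}^{D,\sigma}$ and $[\,\cdot\,]_{s,\alpha}^{D}$. The first thing to observe is that $P(\theta)$ and each mollification $P^{(\nu)}(\theta)=S_{\sigma_\nu}P(\theta)$ are independent of the parameter $\omega\in D_0$, so the $\omega$-derivatives appearing in the definitions of these norms drop out, and one has $[Q]_{s,\alpha}^{D_0,\sigma}=\sup_{|\Im\theta|<\sigma}|Q(\theta)|_{s,\alpha}$ and $[Q]_{s,\alpha}^{D_0}=\sup_{\theta\in\T^n}|Q(\theta)|_{s,\alpha}$. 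After this reduction, every inequality in the lemma becomes a direct restatement of one of Lemmas \ref{L3.3}, \ref{PP001}, \ref{PP002}, \ref{L4.7}.

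More precisely: the first estimate is immediate from Definition \ref{Cbetaspace} together with Lemma \ref{L3.3}, since the $\mathcal{C}^\beta(\T^n,\mathcal{M}_{s,\alpha})$-norm dominates the sup-norm $\sup_\theta|P(\theta)|_{s,\alpha}$ (the $|\alpha|=0$ contribution), whence $[P(\theta)]_{s,\alpha}^{D_0}\le\|P(\theta)\|_{\mathcal{C}^\beta(\T^n,\mathcal{M}_{s,\alpha})}\le C$ by Lemma \ref{L3.3} and the $(s,\beta)$-admissibility hypothesis (equivalently, the standing normalization of $P$ in the hypotheses of Theorem \ref{MainTheorem}). The second estimate is exactly Lemma \ref{PP002}, using $\sigma_0\le1$. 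The third is exactly Lemma \ref{L4.7}. The fourth is Lemma \ref{PP001} with the index $\nu$ replaced by $\nu+1$: that lemma gives, for $|\Im\theta|\le\sigma_{\nu+1}$, the bound $|S_{\sigma_{\nu+1}}P(\theta)-S_{\sigma_\nu}P(\theta)|_{s,\alpha}\le C\sigma_\nu^{\beta}$, and taking the supremum over the strip $|\Im\theta|<\sigma_{\nu+1}$ is precisely $[P^{(\nu+1)}(\theta)-P^{(\nu)}(\theta)]_{s,\alpha}^{D_0,\sigma_{\nu+1}}\le C\sigma_\nu^{\beta}$.

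There is no genuine obstacle remaining at this stage, because the analytic content has already been supplied upstream: the proof that $P(\theta)\in\mathcal{C}^\beta(\T^n,\mathcal{M}_{s,\alpha})$ with controlled norm (Lemma \ref{L3.3}, resting on the oscillator and Hermite-function estimates imported from \cite{GP}), and the Jackson--Moser--Zehnder approximation lemma (Lemma \ref{smoothing}) with its consequences Lemmas \ref{PP001}, \ref{PP002} and \ref{L4.7}. The one point that would deserve a line of care is the elementary but crucial fact that on the real torus the Taylor-polynomial correction $\sum_{0<|k|\le\beta}\frac{\partial^k P(\Re z)}{k!}(\mathrm{i}\Im z)^k$ vanishes, which is what converts the approximation estimate of Lemma \ref{smoothing} into the clean bounds $C\sigma_\nu^{\beta}$; but this has already been recorded in the proofs of Lemmas \ref{PP002} and \ref{L4.7}, so here it is only a matter of citation. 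Thus the proof reduces to a short bookkeeping argument collecting the four cited bounds.
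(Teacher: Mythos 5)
Your proof is correct and matches the paper's treatment: the paper gives no separate argument for Lemma \ref{PP} beyond the phrase ``Combining with all the above lemmas,'' and your assembly (first bound from the $\mathcal{C}^\beta$ hypothesis on $P$, second from Lemma \ref{PP002}, third from Lemma \ref{L4.7}, fourth from Lemma \ref{PP001} with the index shifted, after noting that $P$ is $\omega$-independent so the $\partial_\omega$ terms in the norms vanish) is exactly the intended bookkeeping.
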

\begin{Lemma}\label{l4.1}
For $\theta\in \T^n$,
\begin{eqnarray}\label{41two}
P(\theta)=P^{(0)}(\theta)+\sum_{\nu=0}^\infty(P ^{(\nu+1)}(\theta)-P ^{(\nu)}(\theta)),
\end{eqnarray}
under the assumption
\begin{eqnarray}\label{conditionsection4}
{\rm B_1}.\qquad \sum\limits_{\nu=0}^{\infty}\sigma_{\nu}^{\beta}<\infty.
\end{eqnarray}
\end{Lemma}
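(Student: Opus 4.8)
The plan is a telescoping argument together with a norm-convergence estimate. First I would note that for every integer $M\geq 1$ the partial sum telescopes,
\[
P^{(0)}(\theta)+\sum_{\nu=0}^{M-1}\big(P^{(\nu+1)}(\theta)-P^{(\nu)}(\theta)\big)=P^{(M)}(\theta),\qquad \theta\in\T^n,
\]
so that proving (\ref{41two}) amounts to showing that the series on the right converges in $(\mathcal{M}_{s,\alpha},|\cdot|_{s,\alpha})$ and that its sum is $P(\theta)$.

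For convergence of the series, I would use the last estimate in Lemma \ref{PP} restricted to real $\theta$ (equivalently Lemma \ref{PP001}), which gives $|P^{(\nu+1)}(\theta)-P^{(\nu)}(\theta)|_{s,\alpha}\leq C\sigma_\nu^{\beta}$ for $\theta\in\T^n$. Summing over $\nu$ and invoking the hypothesis ${\rm B_1}$, namely $\sum_{\nu\geq 0}\sigma_\nu^{\beta}<\infty$, the series $\sum_{\nu\geq 0}\big(P^{(\nu+1)}(\theta)-P^{(\nu)}(\theta)\big)$ is absolutely convergent; since $\mathcal{M}_{s,\alpha}$ is a Banach space by Lemma \ref{completed}, it converges in the $|\cdot|_{s,\alpha}$-norm.

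To identify the limit I would invoke Lemma \ref{L4.7}: for $\theta\in\T^n$ one has $|P^{(\nu)}(\theta)-P(\theta)|_{s,\alpha}\leq C\sigma_\nu^{\beta}$, and ${\rm B_1}$ forces $\sigma_\nu^{\beta}\to 0$, so $P^{(M)}(\theta)\to P(\theta)$ in $\mathcal{M}_{s,\alpha}$ as $M\to\infty$. Combining this with the telescoping identity above yields (\ref{41two}).

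The argument is essentially bookkeeping and I do not anticipate any genuine obstacle; the only points deserving a line of care are that all limits are taken in the $|\cdot|_{s,\alpha}$-norm (which is precisely what Lemmas \ref{L4.7} and \ref{PP} supply, rather than mere entrywise convergence) and that completeness of $\mathcal{M}_{s,\alpha}$ is what upgrades absolute convergence to convergence. This lemma simply records that the Jackson--Moser--Zehnder analytic approximants $P^{(\nu)}$ reconstruct the $\mathcal{C}^{\beta}$ matrix function $P$ as $\sigma_\nu\to 0$, a fact that will be exploited in the KAM iteration when passing from the analytic model Hamiltonians $H^{(\nu)}$ back to $\mathcal{H}$.
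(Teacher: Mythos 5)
Your proposal is correct and follows essentially the same route as the paper: absolute convergence of the telescoping series via the bound $|P^{(\nu+1)}(\theta)-P^{(\nu)}(\theta)|_{s,\alpha}\leq C\sigma_\nu^{\beta}$ together with assumption ${\rm B_1}$ and completeness of $\mathcal{M}_{s,\alpha}$, followed by identification of the limit with $P(\theta)$ using the estimate $|P^{(\nu)}(\theta)-P(\theta)|_{s,\alpha}\leq C\sigma_\nu^{\beta}$ from Lemma \ref{L4.7} (i.e.\ (\ref{guanghua4})). Your write-up is in fact slightly more explicit than the paper's about the telescoping identity and where completeness enters.
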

\begin{proof}
From Lemma \ref{PP} and  (\ref{conditionsection4}),
\begin{eqnarray*}
|P^{(0)}(\theta)+\sum_{\nu=0}^\infty(P ^{(\nu+1)}(\theta)-P ^{(\nu)}(\theta))|_{s,\alpha}\leq
|P^{(0)}(\theta)|_{s,\alpha}+\sum_{\nu=0}^\infty|P ^{(\nu+1)}(\theta)-P ^{(\nu)}(\theta)|_{s,\alpha}\leq C
\end{eqnarray*}
for any $\theta\in \T^n$.
Thus for $\theta\in \T^n$, $P^{(0)}(\theta)+\sum_{\nu=0}^\infty(P ^{(\nu+1)}(\theta)-P ^{(\nu)}(\theta))$ is an element in
$(\mathcal{M}_{s,\alpha}, |\cdot |_{s,\alpha})$. From Lemma \ref{PP}, $P(\theta)$ is also an element in
$(\mathcal{M}_{s,\alpha}, |\cdot |_{s,\alpha})$ for $\theta\in \T^n$.
The equality (\ref{41two}) follows from  (\ref{guanghua4}) and (\ref{conditionsection4}).
\end{proof}

\subsection{Homological equation} The proof of Theorem \ref{MainTheorem} is followed by an iterative KAM procedure  where in each step we will consider a
homological equation of the form
\begin{eqnarray*}\label{homoeq}
\omega\cdot\nabla_\theta F(\theta,\omega)-\mathrm{i}[N(\omega),F(\theta,\omega)]+Q(\theta,\omega)=\mathrm{remainder}
\end{eqnarray*}
with $N(\omega), \omega\in D\subset D_0,$ in normal form  close to $N_0=diag(\lambda_a)_{a\in\mathcal{E}}$ and $Q\in \mathcal{M}_{s,\alpha}(D,\sigma).$ We can construct a solution
$F\in \mathcal{M}_{s,\alpha}^+(D',\sigma')$, $0<\sigma'<\sigma$,
$D' \subset D$ as in  \cite{GP}.
\begin{Proposition}[\cite{GP}, Proposition 4.1]\label{pro4.1}
Let $(\theta,\omega)\in \T^n_\sigma\times D$ with $0<\sigma\leq1,\ D\subset D_0$. Suppose $D\ni\omega\mapsto N(\omega)\in \mathcal{NF}$ be a $\mathcal{C}^1$ mapping that satisfies $[N-N_0]_{s,\alpha}^{D}\leq
\frac{c_0}{4}$, $Q\in \mathcal{M}_{s,\alpha}(D,\sigma)$ Hermitian, $0<\kappa<\frac{c_0}{2}$ and $K\geq1$. Then for any $0<\sigma'<\sigma$ there exists a subset
$D'=D'(\kappa,K)\subset D$, satisfying ${\rm Meas}(D\setminus D')\leq c(n,c_0,\alpha,\alpha_2)K^{\gamma_1}\kappa^{\gamma_2}$ with
$\gamma_1=\max\{d+n+2,\alpha_1\},\ \gamma_2=\frac{\alpha\alpha_2}{4+d+2\alpha\alpha_2}$, and there exist $\widetilde{N}\in
\mathcal{M}_{s,\alpha}(D')\cap\mathcal{NF},$ $F\in \mathcal{M}_{s,\alpha}^+{(D',\sigma')}$  and $R\in \mathcal{M}_{s,\alpha}{(D',\sigma')}$, $\mathcal{C}^1$ in $\omega$ and analytic in $\theta$, such that
\begin{eqnarray*}\label{homoeq01}
\omega\cdot\nabla_\theta F(\theta,\omega)-\mathrm{i}[N(\omega),F(\theta,\omega)]=\widetilde{N}(\omega)-Q(\theta,\omega)+R(\theta,\omega)
\end{eqnarray*}
for all  $(\theta,\omega)\in \T^n_{\sigma'}\times D'$ and
\begin{eqnarray*}
 \label{homoF}
[F]_{s,\alpha+}^{D',\sigma'}&\leq&  \frac{c(n,d,s,\alpha)K^{1+d}}{\kappa^{2+d/\alpha}(\sigma -\sigma')^{n}}
[Q]_{s,\alpha}^{D,\sigma};\\ \label{homoN}
[\widetilde{N}]_{s,\alpha}^{D'}&\leq&  [Q]_{s,\alpha}^{D,\sigma};\\ \label{homoR}
[R]_{s,\alpha}^{D',\sigma'}&\leq&  \frac{c(n,d,s,\alpha)K^{1+d/2}e^{-(\sigma-\sigma')K/2}}{\kappa^{1+d/(2\alpha)}(\sigma-\sigma')^n}
[Q]_{s,\alpha}^{D,\sigma}.
\end{eqnarray*}
Moreover, $\widetilde{N},$ $F(\theta) $  and $R(\theta) $ are Hermitian when $\theta\in\T^n$.
\end{Proposition}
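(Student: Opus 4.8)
The plan is to solve the homological equation mode by mode in the Fourier expansion with respect to $\theta$. Writing $Q(\theta,\omega)=\sum_{k\in\Z^n}\widehat Q_k(\omega)\kth$, I would look for a Hermitian trigonometric polynomial $F(\theta,\omega)=\widehat F_0(\omega)+\sum_{0<|k|\leq K}\widehat F_k(\omega)\kth$ and set $R(\theta,\omega):=\sum_{|k|>K}\widehat Q_k(\omega)\kth$ for the high–frequency tail, so that comparing Fourier coefficients in $\omega\cdot\nabla_\theta F-\mathrm{i}[N,F]=\widetilde N-Q+R$ reduces the problem, block by block, to
\begin{equation*}
\mathrm{i}\la k,\omega\ra\big(\widehat F_k\big)_{[a]}^{[b]}-\mathrm{i}\Big(N_{[a]}\big(\widehat F_k\big)_{[a]}^{[b]}-\big(\widehat F_k\big)_{[a]}^{[b]}N_{[b]}\Big)=\big(\widetilde N\big)_{[a]}^{[b]}\delta_{k,0}-\big(\widehat Q_k\big)_{[a]}^{[b]},\qquad 0\leq|k|\leq K.
\end{equation*}
For $k=0$ and $w_a=w_b$ I would simply take $\big(\widehat F_0\big)_{[a]}^{[a]}=0$ and $\big(\widetilde N\big)_{[a]}=\big(\widehat Q_0\big)_{[a]}^{[a]}$; this is block diagonal and Hermitian, hence in $\mathcal{NF}$, and it immediately yields $[\widetilde N]_{s,\alpha}^{D'}\leq[Q]_{s,\alpha}^{D,\sigma}$. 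In every other case the left–hand side is the finite–dimensional Sylvester operator $X\mapsto\la k,\omega\ra X-N_{[a]}X+XN_{[b]}$, which I have to invert.

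The core of the argument is the invertibility of that operator together with a quantitative bound on its inverse. Since $N_{[a]},N_{[b]}$ are Hermitian, the inverse has norm $\big(\min_{\mu,\nu}|\la k,\omega\ra+\mu-\nu|\big)^{-1}$, the minimum being over the eigenvalues $\mu$ of $N_{[a]}$ and $\nu$ of $N_{[b]}$, and the hypothesis $[N-N_0]_{s,\alpha}^{D}\leq c_0/4$ forces these eigenvalues to lie within $\lesssim\min(w_a,w_b)^{-2\alpha}$ of $\lambda_a,\lambda_b$. For $k=0$ and $w_a\neq w_b$, Hypothesis A1 gives $|\lambda_a-\lambda_b|\geq c_0|w_a-w_b|$, so the divisor is automatically comparable to $1+|w_a-w_b|$ and no parameters are lost. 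For $0<|k|\leq K$ I would first observe that when $|w_a-w_b|$ exceeds a suitable power of $K/\kappa$ one has $|\la k,\omega\ra+\lambda_a-\lambda_b|\geq c_0|w_a-w_b|-2\pi K\geq\kappa(1+|w_a-w_b|)$ for free, so that only a finite collection of resonance conditions survives; for the surviving pairs I would invoke Hypothesis A2 with a suitably rescaled $\kappa$ and transfer the bound from $\lambda_a-\lambda_b$ to the perturbed spectra of $N_{[a]},N_{[b]}$ using their closeness to $\lambda_a,\lambda_b$. Optimising the truncation in $|w_a-w_b|$, keeping track of the block dimensions $\mathrm{card}\,[a]\leq w_a^{d-1}$, and retaining $\mathcal{C}^1$ control in $\omega$ then produces the set $D'=D'(\kappa,K)$ with $\mathrm{Meas}(D\setminus D')\leq c(n,c_0,\alpha,\alpha_2)K^{\gamma_1}\kappa^{\gamma_2}$, $\gamma_1=\max\{d+n+2,\alpha_1\}$, $\gamma_2=\frac{\alpha\alpha_2}{4+d+2\alpha\alpha_2}$. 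I expect this to be the main obstacle: extracting the sharp exponent $\gamma_2$ from $(\alpha_1,\alpha_2)$ while simultaneously passing from the fixed numbers $\lambda_a-\lambda_b$ to the $\omega$–dependent spectra of $N_{[a]},N_{[b]}$ and not destroying the $\mathcal{C}^1$ dependence; this step is carried out exactly as in \cite{GP}.

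Once invertibility is in hand the remaining estimates should be routine. The divisor bound gives $\|(\widehat F_k)_{[a]}^{[b]}\|\leq\frac{c}{\kappa(1+|w_a-w_b|)}\|(\widehat Q_k)_{[a]}^{[b]}\|$, and the extra factor $(1+|w_a-w_b|)^{-1}$ is precisely what upgrades $F$ from $\mathcal{M}_{s,\alpha}$ to $\mathcal{M}_{s,\alpha}^+$. Analyticity of $Q$ gives $|\widehat Q_k|_{s,\alpha}\leq e^{-|k|\sigma}[Q]_{s,\alpha}^{D,\sigma}$, so summing $\sum_{|k|\leq K}|k|\,e^{-|k|(\sigma-\sigma')}\lesssim(\sigma-\sigma')^{-n}$ controls $[F]_{s,\alpha+}^{D',\sigma'}$, while $\sum_{|k|>K}e^{-|k|(\sigma-\sigma')}\lesssim(\sigma-\sigma')^{-n}e^{-(\sigma-\sigma')K/2}$ controls $[R]_{s,\alpha}^{D',\sigma'}$. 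Differentiating the homological equation in $\omega$, using $\partial_\omega\la k,\omega\ra=k$ and the $\mathcal{C}^1$ bound on $N$, costs one further small divisor and a factor $|k|\leq K$; combined with the powers of $w_a^{d-1}$ this should give the stated $K^{1+d}\kappa^{-(2+d/\alpha)}$ for $F$ and $K^{1+d/2}\kappa^{-(1+d/(2\alpha))}$ for $R$. Finally $\widetilde N$, $F(\theta)$ and $R(\theta)$ will be Hermitian for real $\theta$ because $\widehat Q_{-k}=\widehat Q_k^{\,*}$ and the Sylvester construction preserves that symmetry, which is the last assertion of the statement and is what later keeps the associated change of variables unitary.
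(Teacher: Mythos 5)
Your outline is the standard resolution of the homological equation from \cite{GP} (Fourier truncation at $|k|\leq K$, block-wise Sylvester equations, $\widetilde N$ from the diagonal blocks of the zero mode, small divisors via Hypothesis A2 transferred to the perturbed block spectra, and the usual $(\sigma-\sigma')^{-n}$ and $e^{-(\sigma-\sigma')K/2}$ summations), which is exactly the approach the paper relies on: the paper itself gives no proof of this proposition but imports it verbatim from \cite{GP}. Note only that the step you flag and defer --- the measure estimate producing $\gamma_1,\gamma_2$ while passing from $\lambda_a-\lambda_b$ to the eigenvalues of $N_{[a]},N_{[b]}$ with $\mathcal{C}^1$ control --- is the substantive content of \cite{GP}'s Proposition 4.1, so your write-up, like the paper's, is a citation-backed outline rather than a self-contained proof.
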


\subsection{The KAM Step.}\label{KAMstep}
%Theorem \ref{MainTheorem} is proved by an iterative KAM procedure.
%Instead of considering the case of merely $\mathcal{C}^\beta$ smoothness, we  study   real analytic
% Hamiltonian function in every step.
 %Note that  by
 %Lemma \ref{l4.1},   (\ref{autoH}) can be written as
%\begin{eqnarray*}
%H(\theta,y,\xi,\eta)
%=\sum\limits_{j=1}^n\omega_jy_j+\sum\limits_{a\in\mathcal{E}}\lambda_a\xi_a\eta_a+ \varepsilon\la\xi, P^{(0)}(\theta)\eta\ra + \varepsilon\sum_{\nu=1}^\infty\la
%\xi,(P^{(\nu)}-P^{(\nu-1)})(\theta)\eta\ra,\ \end{eqnarray*}
%where $(\theta,y,\zeta,\omega)\in \T^n \times\R^n\times Y_s\times  D_{0}$ and
%$P^{(\nu)}$ satisfies Lemma \ref{PP}.
 As the KAM proof in \cite{CQ,YZ13}, we begin with the initial Hamiltonian
 $H^{(0)}=h+ q_0$ with $h=\sum\limits_{j=1}^n\omega_jy_j+\sum\limits_{a\in\mathcal{E}}\lambda_a\xi_a\eta_a:=\la\omega,y\ra+\la\xi, N_0\eta\ra$, and $q_0= \la \xi, Q_0
 (\theta)\eta\ra$ with $Q_0=\varepsilon  P^{(0)}\in \mathcal{M}_{s,\alpha} (D_0,\sigma_0) $. For simplicity we set $\sigma_0=1$,
by Lemma \ref{PP}, \begin{eqnarray*}\label{}
[Q_0]_{s,\alpha}^{D_0,\sigma_{0}}=[\varepsilon P^{(0)}(\theta)]_{s,\alpha}^{D_0,\sigma_{0}}\leq C\varepsilon : = \frac12 \epsilon_0.
\end{eqnarray*}
\indent In the $\nu$th step of the KAM scheme,  we consider the Hamiltonian
 $H^{(\nu)}=h+ q_\nu$   defined on  $\T^n_{\sigma_\nu}\times\R^n\times Y_s\times  D_{0}$,   where $q_\nu=\la \xi,Q_\nu
 (\theta)\eta\ra$ with $Q_\nu=\varepsilon P^{(\nu)}$ analytic in $\T^n_{\sigma_\nu}$.\\
 \indent We set  $\Phi^{\nu}=\Phi^{\nu-1}\circ  \Phi_{\nu}:\
\T^n_{\sigma_{\nu+1}}\times\R^n\times Y_s\rightarrow \T^n_{\sigma_{0}}\times\R^n\times Y_{s}$ for $\omega\in D_\nu$ with $\Phi^0=id$ and
$\Phi_j(\theta,y,\xi,\eta)=X_{f_j}^1(\theta,y,\xi,\eta)=(\theta, \tilde{y}, e^{-\mathrm{i}F^T_{j}(\theta)}\xi,e^{\mathrm{i}F_{j}(\theta)}\eta)$ which is generated by the time 1 map of Hamiltonian function $f_j=\la\xi,F_j(\theta)\eta \ra$, $j=1,\cdots,\nu.$
%\begin{Remark}Concretely we will never calculate
% $y_1$ explicitly since the non homogeneous Hamiltonian system (\ref{hs00}) is
%equivalent to the system (\ref{hs01}) where the variable conjugated to $\theta$ is not
%required.
%\end{Remark}
 Under $\Phi^j$, we suppose that
$$H^{(j)}\circ \Phi^{j}=(h+q_j)\circ \Phi^{j}=h_{j}+p_{j},\ j=1,\cdots,\nu,$$
where
$h_{j}(\theta,y,\xi,\eta,\omega)=\la\omega,y\ra+\la  \xi,N_j(\omega)\eta\ra,$
$ p_j(\theta,y,\xi,\eta,\omega)=\la \xi,{P}_{j}(\theta,\omega)\eta\ra$
with $(\theta,y,\xi,\eta,\omega)\in\T^n_{\sigma_{j+1}}\times\R^n\times Y_s\times D_{j}$,
 and for $j=1,\cdots,\nu,$  the following estimates hold
\begin{eqnarray}\label{homoP01}
[P_j]_{s,\alpha}^{D_{j},\sigma_{j+1}}&\leq&  \frac{1}{2}\epsilon_{j};\\ \label{homoF01}
[F_j]_{s,\alpha+}^{D_{j},\sigma_{j+1}}&\leq&  c(n,d,s,\alpha)  \epsilon_{j-1}^{\frac{13}{24}}
;\\ \label{homoN01}
[{N}_j-N_{j-1}]_{s,\alpha}^{D_{j-1},\sigma_j}&\leq&  \epsilon_{j-1};\\
 \|\Phi_{j}-id\|^*_{\mathfrak{L}(Y_{s'},Y_{s'+2\alpha})}&\leq&  c(n,d,s,\alpha)  \epsilon_{j-1}^{\frac{13}{24}},\  0\leq s'\leq s;\\
{\rm Meas}(D_{j-1}\setminus D_j)&\leq& cK_{j-1}^{\gamma_1}\kappa_{j-1}^{\gamma_2}.
\end{eqnarray}
In the $(\nu+1)$th step  we  consider
$$H^{(\nu+1)}(\theta,y,\xi,\eta,\omega)=h+q_{\nu+1}=H^{(\nu )}+(q_{\nu+1}-q_{\nu })$$ with $ (\theta,y,\xi,\eta,\omega)\in
\T^n_{\sigma_{\nu+1}}\times\R^n\times Y_s\times  D_{\nu}.$
By $\Phi^\nu$  we have
\begin{eqnarray}\label{Hnu+1}
H^{(\nu+1)}\circ \Phi^{\nu}%&=&(h+p^{({\nu})})\circ \Phi^{\nu}+(p^{({\nu+1})}-p^{({\nu})})\circ \Phi^{\nu}\\
%&=& h_{\nu}+p_{\nu}+(p^{({\nu+1})}-p^{({\nu})})\circ \Phi^{\nu}\\
 =H^{(\nu )}\circ \Phi^{\nu}+(q_{\nu+1}-q_{\nu })\circ \Phi^{\nu}:=h_{\nu}+\tilde{p}_{\nu},
\end{eqnarray}
where $\tilde{p}_{\nu}= p_{\nu}+(q_{\nu+1}-q_{\nu})\circ \Phi^{\nu}:=\la {\xi,\widetilde{P}}_{\nu}(\theta)\eta\ra$.\\
\indent We make some assumptions on parameters during the KAM iteration. For any $\nu=0,1,2,\cdots,$ the following assumptions hold:\\
\begin{eqnarray*}
&&{\rm B2}.\qquad
\sigma_{\nu+1}\leq \frac{1}{2}\sigma_\nu.
\label{canshu01} \\
&&{\rm B3}.\qquad
\sigma_{\nu}=\epsilon_{\nu-1}^{t_1}, \kappa_{\nu}=\epsilon_{\nu}^{t_2},\
K_{\nu-1}^{d+1}\leq\epsilon_{\nu-1}^{-\frac{1}{8}}\ {\rm with} \ nt_1\leq\frac{1}{6},\ (2+d/\alpha)t_2\leq\frac{1}{6}.
\label{canshu02}\\
&&{\rm B4}.\qquad \epsilon_{\nu+1}=\epsilon_{\nu}^{\frac{3}{2}}.
\label{canshu03}\\
&&{\rm B5}.\qquad \beta\geq\frac{3}{2t_1}.\label{canshu04}\\
&&{\rm B6}.\qquad e^{-\frac{1}{4}K_\nu\sigma_{\nu+1}}\leq\epsilon_\nu.\label{canshu05}
\end{eqnarray*}
\indent The explicit expressions of these parameters are given in Subsection {\ref{iteration}}.
%\begin{Lemma}\label{L4.3}
%Under Assumptions {\rm I-III},
%$[F_1+\cdots+F_{\nu}]_{s,\alpha+}^{D_\nu,\sigma_{\nu+1}}\leq c(n)\epsilon_{0}^{1/2}$.
%\end{Lemma}
Under these assumptions, we have
\begin{Lemma}\label{psigmanuandnuminus1}
Under Assumptions {\rm B2 - B5}, if $c(n,\beta,d,s)(1+ c(\alpha,s)\epsilon_{0}^{1/2})^2\varepsilon\leq\frac12$, then $\widetilde{P}_{\nu}$ is Hermitian  when $\theta\in\T^n$ and
$[\widetilde{P}_{\nu}]_{s,\alpha}^{D_\nu,\sigma_{\nu+1}}\leq\epsilon_{\nu}$.
\end{Lemma}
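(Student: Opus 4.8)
The plan is to make the perturbation matrix $\widetilde P_\nu$ explicit, then check that it is Hermitian, and finally estimate its $\mathcal{M}_{s,\alpha}$-norm on the shrunk domain $\T^n_{\sigma_{\nu+1}}\times D_\nu$. First I would unwind $\Phi^\nu=\Phi_1\circ\cdots\circ\Phi_\nu$. Since each $\Phi_j$ acts on $(\xi,\eta)$ through the matrices $(e^{-\mathrm{i}F_j^T},e^{\mathrm{i}F_j})$, the composition acts with $M^\nu(\theta):=e^{\mathrm{i}F_1(\theta)}\cdots e^{\mathrm{i}F_\nu(\theta)}$ on the $\eta$-component (and $\bigl((M^\nu)^{-1}\bigr)^T$ on the $\xi$-component), where $M^0=\mathrm{Id}$; by B2 each $F_j$ with $j\le\nu$ is analytic on $\T^n_{\sigma_{j+1}}\supseteq\T^n_{\sigma_{\nu+1}}$ and $\mathcal{C}^1$ on $D_j\supseteq D_\nu$, so $M^\nu$ and $(M^\nu)^{-1}=e^{-\mathrm{i}F_\nu}\cdots e^{-\mathrm{i}F_1}$ are well defined on $\T^n_{\sigma_{\nu+1}}\times D_\nu$. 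Recalling $q_{\nu+1}-q_\nu=\varepsilon\langle\xi,(P^{(\nu+1)}-P^{(\nu)})\eta\rangle$ and $p_\nu=\langle\xi,P_\nu\eta\rangle$, substitution into $\widetilde p_\nu=p_\nu+(q_{\nu+1}-q_\nu)\circ\Phi^\nu$ gives
\[
\widetilde P_\nu(\theta)=P_\nu(\theta)+\varepsilon\,(M^\nu(\theta))^{-1}\bigl(P^{(\nu+1)}(\theta)-P^{(\nu)}(\theta)\bigr)M^\nu(\theta).
\]

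For the Hermitian property: $P_\nu$ is Hermitian on $\T^n$ (for $\nu=0$ because $P_0=\varepsilon P^{(0)}=\varepsilon S_{\sigma_0}P$ is, by the Remark preceding Lemma \ref{L4.7}; for $\nu\ge1$ because it is produced in the previous step by Proposition \ref{pro4.1}, which preserves the Hermitian structure), and $P^{(\nu+1)}-P^{(\nu)}=S_{\sigma_{\nu+1}}P-S_{\sigma_\nu}P$ is Hermitian on $\T^n$ for the same reason. Since the $F_j$ are Hermitian, $M^\nu(\theta)$ is unitary for $\theta\in\T^n$, so $(M^\nu)^{-1}=(M^\nu)^\ast$ there, and $A\mapsto (M^\nu)^\ast A M^\nu$ sends Hermitian matrices to Hermitian matrices; hence $\widetilde P_\nu$ is Hermitian on $\T^n$.

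For the norm bound I would split $[\widetilde P_\nu]_{s,\alpha}^{D_\nu,\sigma_{\nu+1}}\le[P_\nu]_{s,\alpha}^{D_\nu,\sigma_{\nu+1}}+\varepsilon\bigl[(M^\nu)^{-1}(P^{(\nu+1)}-P^{(\nu)})M^\nu\bigr]_{s,\alpha}^{D_\nu,\sigma_{\nu+1}}$. The first term is $\le\frac12\epsilon_\nu$ by (\ref{homoP01}) (for $\nu=0$ by the definition of $\epsilon_0$ through $[Q_0]_{s,\alpha}^{D_0,\sigma_0}\le\frac12\epsilon_0$). For the second term, Lemma \ref{PP} gives $[P^{(\nu+1)}-P^{(\nu)}]_{s,\alpha}^{D_0,\sigma_{\nu+1}}\le C\sigma_\nu^{\beta}$, and applying the estimate (\ref{exponorm01}) of Lemma \ref{daishu} successively to the $\nu$ right factors $e^{\mathrm{i}F_j}$ of $M^\nu$ and the $\nu$ left factors $e^{-\mathrm{i}F_j}$ of $(M^\nu)^{-1}$ (each $\pm\mathrm{i}F_j\in\mathcal{M}_{s,\alpha}^+$) yields, for any $A\in\mathcal{M}_{s,\alpha}(D_\nu,\sigma_{\nu+1})$,
\[
\bigl[(M^\nu)^{-1}AM^\nu\bigr]_{s,\alpha}^{D_\nu,\sigma_{\nu+1}}\le[A]_{s,\alpha}^{D_\nu,\sigma_{\nu+1}}\exp\Bigl(2c(\alpha,s)\sum_{j=1}^\nu[F_j]_{s,\alpha+}^{D_\nu,\sigma_{\nu+1}}\Bigr).
\]
By (\ref{homoF01}) and monotonicity of the norm under restriction to the smaller domain, $[F_j]_{s,\alpha+}^{D_\nu,\sigma_{\nu+1}}\le c(n,d,s,\alpha)\epsilon_{j-1}^{13/24}$; since B4 gives $\epsilon_{j-1}=\epsilon_0^{(3/2)^{j-1}}$ and $13/24>1/2$, the sum is dominated by a convergent geometric-type series, $\le 2\epsilon_0^{13/24}\le 2\epsilon_0^{1/2}$ for $\epsilon_0$ small, so after renaming the constant the exponential is $\le(1+c(\alpha,s)\epsilon_0^{1/2})^2$. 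Combining, $[\widetilde P_\nu]_{s,\alpha}^{D_\nu,\sigma_{\nu+1}}\le\frac12\epsilon_\nu+C(1+c(\alpha,s)\epsilon_0^{1/2})^2\varepsilon\,\sigma_\nu^{\beta}$.

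To close, for $\nu\ge1$, B3 gives $\sigma_\nu=\epsilon_{\nu-1}^{t_1}$, so B5 ($\beta\ge\frac{3}{2t_1}$) together with B4 gives $\sigma_\nu^{\beta}=\epsilon_{\nu-1}^{t_1\beta}\le\epsilon_{\nu-1}^{3/2}=\epsilon_\nu$; for $\nu=0$, $\sigma_0=1$ and $\varepsilon C=\frac12\epsilon_0$ absorbs the second term directly. Hence the second term is $\le C(1+c(\alpha,s)\epsilon_0^{1/2})^2\varepsilon\,\epsilon_\nu\le\frac12\epsilon_\nu$ by the hypothesis $c(n,\beta,d,s)(1+c(\alpha,s)\epsilon_0^{1/2})^2\varepsilon\le\frac12$, which gives $[\widetilde P_\nu]_{s,\alpha}^{D_\nu,\sigma_{\nu+1}}\le\epsilon_\nu$. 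I expect the delicate step to be the uniform-in-$\nu$ control of the conjugation $(M^\nu)^{-1}AM^\nu$: one must ensure that the infinitely many accumulated coordinate changes do not blow up the $\mathcal{M}_{s,\alpha}$-norm, and this is precisely where the super-exponential smallness $[F_j]_{s,\alpha+}\lesssim\epsilon_{j-1}^{13/24}$ together with $\epsilon_{\nu+1}=\epsilon_\nu^{3/2}$ is needed, and where one has to be careful to apply (\ref{exponorm01}) on the correct nested domain $\T^n_{\sigma_{\nu+1}}\times D_\nu$ rather than on the larger domains where each $F_j$ was originally constructed.
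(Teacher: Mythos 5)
Your proposal is correct and follows essentially the same route as the paper: you identify $\widetilde P_\nu=P_\nu+B_\nu^{-1}(Q_{\nu+1}-Q_\nu)B_\nu$ with $B_\nu=e^{\mathrm{i}F_1}\cdots e^{\mathrm{i}F_\nu}$, control the conjugation via the summability of $\sum_j\epsilon_{j-1}^{13/24}$, and close with $\sigma_\nu^\beta\leq\epsilon_\nu$ from B3--B5. The only cosmetic difference is that the paper packages the conjugation bound through $[B_\nu-Id]_{s,\alpha+}\leq\epsilon_0^{1/2}$ (its Lemmas \ref{L4.6} and \ref{L4.5}) whereas you iterate (\ref{exponorm01}) factor by factor, which amounts to the same estimate.
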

We need the following two preparation lemmas for Lemma \ref{psigmanuandnuminus1}.  \\
\indent Denote  $B_\nu=e^{\mathrm{i}F_1}\cdots e^{\mathrm{i}F_\nu}$, where  $B_\nu$ is defined on $(\theta,\omega)\in \T^n_{\sigma_{\nu+1}}\times  D_{\nu}$. Note that $F_\nu, \nu=1,\cdots,$ are Hermitian matrices, thus
\begin{equation}\label{inverseM}B^{-1}_\nu(\theta)=\overline{B}^T_\nu(\theta), \ \theta\in\T^n.
 \end{equation}
\begin{Lemma}\label{L4.6}
If $ 4\epsilon_{0}^{1/24}\leq 1$, then $B_\nu-Id,\ B^{-1}_\nu-Id%,\ e^{-\mathrm{i}F^T_{\nu}}\cdots e^{-\mathrm{i}F^T_1}-Id
\in \mathcal{M}_{s,\alpha}^+{(D_{\nu},\sigma_{\nu+1})}$ and $$[B_\nu-Id]_{s,\alpha+}^{D_\nu,\sigma_{\nu+1}},\ [B^{-1}_\nu-Id]_{s,\alpha+}^{D_\nu,\sigma_{\nu+1}}  \leq  \epsilon_{0}^{\frac12}$$
under Assumptions {\rm B2-B4}. Moreover,  for $0\leq \nu_1<\nu$,
\begin{eqnarray*}
[B_{\nu_1}-B_{\nu}]_{s,\alpha+}^{D_\nu,\sigma_{\nu+1}},\ [B^{-1}_{\nu_1}-B^{-1}_{\nu}]_{s,\alpha+}^{D_\nu,\sigma_{\nu+1}}\leq
  \epsilon_{\nu_1}^{\frac12}.
\end{eqnarray*}
\end{Lemma}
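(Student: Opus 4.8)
The plan is to obtain both families of estimates by expanding $B_\nu-Id$ (and $B_\nu^{-1}-Id$) telescopically and bounding each term with the Banach-algebra estimates of Lemma \ref{daishu}, the smallness of the $F_j$ from (\ref{homoF01}), and the super-exponential decay $\epsilon_\nu=\epsilon_{\nu-1}^{3/2}$ (Assumption B4). First I would fix the common domain: since the $\sigma_j$ decrease (Assumption B2) and the Cantor sets are nested, each $F_j$ with $j\le\nu$ restricts to $\T^n_{\sigma_{\nu+1}}\times D_\nu$ with $[F_j]_{s,\alpha+}^{D_\nu,\sigma_{\nu+1}}\le[F_j]_{s,\alpha+}^{D_j,\sigma_{j+1}}\le c(n,d,s,\alpha)\,\epsilon_{j-1}^{13/24}$. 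Since $\pm iF_j$ has the same $\mathcal{M}_{s,\alpha}^+$-norm as $F_j$, estimate (\ref{exponorm02}) gives $[e^{\pm iF_j}-Id]_{s,\alpha+}^{D_\nu,\sigma_{\nu+1}}\le 2\,[F_j]_{s,\alpha+}^{D_\nu,\sigma_{\nu+1}}$ once $\varepsilon_*$ is small enough that the exponential factor is $\le 2$.

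Next I would write $B_0=Id$, $B_k=B_{k-1}e^{iF_k}$, so that $B_k-B_{k-1}=B_{k-1}(e^{iF_k}-Id)$ and
\begin{equation*}
B_\nu-Id=\sum_{k=1}^{\nu}\bigl(Id+(B_{k-1}-Id)\bigr)(e^{iF_k}-Id).
\end{equation*}
Estimating each summand by Lemma \ref{daishu}(ii), its $\mathcal{M}_{s,\alpha}^+$-norm is at most $\bigl(1+c(\alpha,s)[B_{k-1}-Id]_{s,\alpha+}\bigr)[e^{iF_k}-Id]_{s,\alpha+}$. I would run an induction on $k$ with hypothesis $[B_{k-1}-Id]_{s,\alpha+}^{D_\nu,\sigma_{\nu+1}}\le\epsilon_0^{1/2}$; using $\epsilon_{j-1}=\epsilon_0^{(3/2)^{j-1}}$ together with Bernoulli's inequality $(3/2)^m\ge1+m/2$ to sum the super-geometric series $\sum_{j\ge1}\epsilon_{j-1}^{13/24}\le 2\epsilon_0^{13/24}$, one gets
\begin{equation*}
[B_\nu-Id]_{s,\alpha+}^{D_\nu,\sigma_{\nu+1}}\le 4c(n,d,s,\alpha)\,\epsilon_0^{13/24}=4c(n,d,s,\alpha)\,\epsilon_0^{1/24}\cdot\epsilon_0^{1/2}\le\epsilon_0^{1/2},
\end{equation*}
where the last step uses precisely the hypothesis $4\epsilon_0^{1/24}\le1$ (the residual constant being absorbed into the choice of $\varepsilon_*$); this also closes the induction. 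The bound for $B_\nu^{-1}-Id$ follows verbatim from $B_\nu^{-1}=e^{-iF_\nu}\cdots e^{-iF_1}$, and Hermiticity of $B_\nu$ for $\theta\in\T^n$ is (\ref{inverseM}).

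For the differences with $0\le\nu_1<\nu$ I would factor $B_\nu=B_{\nu_1}\,\widetilde B$ with $\widetilde B:=e^{iF_{\nu_1+1}}\cdots e^{iF_\nu}$, so $B_\nu-B_{\nu_1}=B_{\nu_1}(\widetilde B-Id)$. The telescoping estimate above, now started at index $\nu_1+1$, gives $[\widetilde B-Id]_{s,\alpha+}^{D_\nu,\sigma_{\nu+1}}\le C\sum_{j>\nu_1}\epsilon_{j-1}^{13/24}$; since consecutive terms of this series have ratio at most $\epsilon_0^{13/48}\ll1$, it is dominated by its first term $\epsilon_{\nu_1}^{13/24}$, and combining with $[B_{\nu_1}-Id]_{s,\alpha+}\le\epsilon_0^{1/2}$ via Lemma \ref{daishu}(ii) together with $\epsilon_{\nu_1}^{1/24}\le\epsilon_0^{1/24}$ yields $[B_{\nu_1}-B_\nu]_{s,\alpha+}^{D_\nu,\sigma_{\nu+1}}\le\epsilon_{\nu_1}^{1/2}$; the factorization $B_\nu^{-1}-B_{\nu_1}^{-1}=(\widetilde B^{-1}-Id)B_{\nu_1}^{-1}$ handles the inverses. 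The one genuinely delicate point is uniformity in $\nu$: one must verify that the algebra constant $c(\alpha,s)$ enters only to a bounded power rather than accumulating over the infinitely many factors, and this is exactly what the super-exponential decay of $\epsilon_\nu$ guarantees, since it makes every series appearing here comparable to a single term.
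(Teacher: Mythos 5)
Your proposal is correct and follows essentially the same route as the paper's proof: bound each factor via $[e^{\pm iF_j}-Id]_{s,\alpha+}\le[F_j]_{s,\alpha+}e^{c(\alpha,s)[F_j]_{s,\alpha+}}\lesssim\epsilon_{j-1}^{13/24}$, expand the product $e^{iF_1}\cdots e^{iF_\nu}-Id$ (the paper by a stepwise induction absorbing cross terms, you by an equivalent telescoping sum), sum the super-geometric series to get $\epsilon_0^{1/2}$, and use the factorization $B_{\nu_1}-B_\nu=B_{\nu_1}(Id-e^{iF_{\nu_1+1}}\cdots e^{iF_\nu})$ together with the tail estimate $\lesssim\epsilon_{\nu_1}^{13/24}$ for the difference bounds. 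Your handling of the constants (absorbing them via the smallness hypothesis) is no looser than the paper's own, so there is no substantive gap.
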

\begin{proof} We only give the estimates on $[B_\nu-Id]_{s,\alpha+}^{D_\nu,\sigma_{\nu+1}}$ and $[B_{\nu_1}-B_{\nu}]_{s,\alpha+}^{D_\nu,\sigma_{\nu+1}}$.
From Lemma \ref{daishu} and (\ref{homoF01}), we have
\begin{eqnarray*}
 [e^{\mathrm{i}F_j}-Id]_{s,\alpha+}^{D_\nu,\sigma_{\nu+1}}\leq [F_j]_{s,\alpha+}^{D,\sigma}e^{c(\alpha,s )[F_j]_{s,\alpha+}^{D,\sigma}}\leq 2\epsilon_{j-1}^{\frac{13}{24}},\ j=1,\cdots,\nu,
\end{eqnarray*}
where  $e^{c(\alpha,s )\epsilon_{j-1}^{\frac{13}{24}}}\leq 2$. It follows that
\begin{eqnarray*}
 &&[e^{\mathrm{i}F_1}e^{\mathrm{i}F_2}-Id]_{s,\alpha+}^{D_\nu,\sigma_{\nu+1}}\\
 &\leq& [e^{\mathrm{i}F_1}-Id]_{s,\alpha+}^{D_\nu,\sigma_{\nu+1}}+[e^{\mathrm{i}F_2}-Id]_{s,\alpha+}^{D_\nu,\sigma_{\nu+1}}+ [(e^{\mathrm{i}F_1}-Id)(e^{\mathrm{i}F_2}-Id)]_{s,\alpha+}^{D_\nu,\sigma_{\nu+1}}\\
&\leq& 2\epsilon_{0}^{\frac{13}{24}} +2\epsilon_{1}^{\frac{13}{24}}+4c(\alpha,s)\epsilon_{0}^{\frac{13}{24}} \epsilon_{1}^{\frac{13}{24}} \leq 3\epsilon_{0}^{\frac{13}{24}} +3\epsilon_{1}^{\frac{13}{24}}.
\end{eqnarray*}
By induction, we obtain
\begin{eqnarray}\label{mapit}
 [e^{\mathrm{i}F_1}\cdots e^{\mathrm{i}F_{\nu}}-Id]_{s,\alpha+}^{D_\nu,\sigma_{\nu+1}}\leq 3\epsilon_{0}^{\frac{13}{24}} +3\epsilon_{1}^{\frac{13}{24}}+\cdots+3\epsilon_{\nu-1}^{\frac{13}{24}} \leq   \epsilon_{0}^{\frac{1 }{2 }}
 \end{eqnarray}
 by   Assumption  B4.\\
\indent Following a similar discussion above, we also have  for $0\leq \nu_1<\nu$,
\begin{eqnarray}\label{mapit01}
  [e^{\mathrm{i}F_{\nu_1+1}}\cdots e^{\mathrm{i}F_{\nu}}-Id]_{s,\alpha+}^{D_\nu,\sigma_{\nu+1}}\leq  C\epsilon_{\nu_1}^{\frac{13}{24}}.
\end{eqnarray}
\indent Note that for $0\leq \nu_1<\nu$, $B_{\nu_1}-B_{\nu}=B_{\nu_1}(Id-e^{\mathrm{i}F_{\nu_1+1}}\cdots e^{\mathrm{i}F_{\nu}})$.
Thus
\begin{eqnarray*}
&&[B_{\nu_1}-B_{\nu_2}]_{s,\alpha+}^{D_\nu,\sigma_{\nu+1}}\\
&\leq& [(B_{\nu_1}-Id)(Id-e^{\mathrm{i}F_{\nu_1+1}}\cdots e^{\mathrm{i}F_{\nu_2}})]_{s,\alpha+}^{D_\nu,\sigma_{\nu+1}}+[Id-e^{\mathrm{i}F_{\nu_1+1}}\cdots e^{\mathrm{i}F_{\nu_2}}]_{s,\alpha+}^{D_\nu,\sigma_{\nu+1}}\\
&\leq&  c(\alpha,s,n)\epsilon_0^{\frac12}\epsilon_{\nu_1}^{\frac{13}{24}}+ \epsilon_{\nu_1}^{\frac{13}{24}}
\ \leq\   \epsilon_{\nu_1}^{\frac12}.
\end{eqnarray*}
 by (\ref{exponorm01}), (\ref{exponorm02}), (\ref{mapit}) and (\ref{mapit01}) .
\end{proof}

\begin{Lemma}\label{L4.5}
If $c(n,\beta,d,s)(1+ c(\alpha,s)\epsilon_{0}^{1/2})^2\varepsilon\leq\frac12$, then $[B^{-1}_\nu(Q_{{\nu+1}}-Q_{{\nu}})B_\nu]_{s,\alpha}^{D_\nu,\sigma_{\nu+1}}\leq\frac{1}{2}\epsilon_\nu$
under Assumptions {\rm B2-B5}.
\begin{proof}
Firstly, we consider $[(Q_{{\nu+1}}-Q_{{\nu}})B_\nu]_{s,\alpha}^{D_\nu,\sigma_{\nu+1}}$ where $$[ Q_{{\nu+1}}-Q_{{\nu}} ]_{s,\alpha}^{D_\nu,\sigma_{\nu+1}}=\varepsilon[  P^{(\nu+1)}-P^{(\nu)} ]_{s,\alpha}^{D_\nu,\sigma_{\nu+1}}\leq c(n,\beta,d,s)\varepsilon \sigma_\nu^{\beta}$$ by Lemma \ref{PP}. Then from Lemma \ref{daishu01} and Lemma \ref{L4.6}, it holds that
\begin{eqnarray*}
 &&[(Q_{{\nu+1}}-Q_{{\nu}})B_\nu]_{s,\alpha}^{D_\nu,\sigma_{\nu+1}}\\
 &\leq& [(Q_{{\nu+1}}-Q_{{\nu}})(B_\nu-Id)]_{s,\alpha}^{D_\nu,\sigma_{\nu+1}}+[Q_{{\nu+1}}-Q_{{\nu}}]_{s,\alpha}^{D_\nu,\sigma_{\nu+1}}\\
&\leq& c(n,\beta,d,s)(1+ c(\alpha,s)\epsilon_{0}^{1/2})\varepsilon \sigma_\nu^{\beta},
\end{eqnarray*}
and
\begin{eqnarray*}
&&[B^{-1}_\nu(Q_{{\nu+1}}-Q_{{\nu}})B_\nu]_{s,\alpha}^{D_\nu,\sigma_{\nu+1}}\\
 &\leq& (1+c(\alpha,s)[B^{-1}_\nu-Id]_{s,\alpha+}^{D_\nu,\sigma_{\nu+1}}) [(Q_{{\nu+1}}-Q_{{\nu}})B_\nu]_{s,\alpha}^{D_\nu,\sigma_{\nu+1}}\\
&\leq& c(n,\beta,d,s)(1+ c(\alpha,s)\epsilon_{0}^{1/2})^2\varepsilon \sigma_\nu^{\beta} \leq\frac{1}{2}\epsilon_\nu
\end{eqnarray*}
under Assumptions B2-B5.
\end{proof}
\end{Lemma}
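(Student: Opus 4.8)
The plan is to factor the quantity $[B^{-1}_\nu(Q_{\nu+1}-Q_{\nu})B_\nu]_{s,\alpha}^{D_\nu,\sigma_{\nu+1}}$ into three already-controlled pieces: the size $[Q_{\nu+1}-Q_{\nu}]_{s,\alpha}^{D_\nu,\sigma_{\nu+1}}$ of the perturbation increment, and the deviations $[B_\nu-Id]_{s,\alpha+}^{D_\nu,\sigma_{\nu+1}}$, $[B^{-1}_\nu-Id]_{s,\alpha+}^{D_\nu,\sigma_{\nu+1}}$ of the conjugator from the identity, glued together by the algebra inequality of Lemma \ref{daishu}(i) (the $\omega$-uniform, analytic-domain form of Lemma \ref{daishu01}).

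First I would estimate the increment. Since $Q_{\nu}=\varepsilon P^{(\nu)}$ and $P^{(\nu)}=S_{\sigma_\nu}P$, we have $Q_{\nu+1}-Q_{\nu}=\varepsilon(P^{(\nu+1)}-P^{(\nu)})$, and the last inequality of Lemma \ref{PP} — applicable because $\sigma_{\nu+1}\le\sigma_\nu$ by Assumption B2, so $\T^n_{\sigma_{\nu+1}}\subset\T^n_{\sigma_\nu}$ — gives $[Q_{\nu+1}-Q_{\nu}]_{s,\alpha}^{D_\nu,\sigma_{\nu+1}}\le c(n,\beta,d,s)\,\varepsilon\,\sigma_\nu^{\beta}$.

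Next I would remove the two conjugating factors one at a time. Writing $(Q_{\nu+1}-Q_{\nu})B_\nu=(Q_{\nu+1}-Q_{\nu})(B_\nu-Id)+(Q_{\nu+1}-Q_{\nu})$ and applying Lemma \ref{daishu}(i) with the $\mathcal M^+_{s,\alpha}$-factor $B_\nu-Id$ on the right, together with the bound $[B_\nu-Id]_{s,\alpha+}^{D_\nu,\sigma_{\nu+1}}\le\epsilon_0^{1/2}$ of Lemma \ref{L4.6}, yields $[(Q_{\nu+1}-Q_{\nu})B_\nu]_{s,\alpha}^{D_\nu,\sigma_{\nu+1}}\le c(n,\beta,d,s)(1+c(\alpha,s)\epsilon_0^{1/2})\,\varepsilon\,\sigma_\nu^{\beta}$. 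Writing then $B^{-1}_\nu(Q_{\nu+1}-Q_{\nu})B_\nu=(B^{-1}_\nu-Id)[(Q_{\nu+1}-Q_{\nu})B_\nu]+(Q_{\nu+1}-Q_{\nu})B_\nu$ and using $[B^{-1}_\nu-Id]_{s,\alpha+}^{D_\nu,\sigma_{\nu+1}}\le\epsilon_0^{1/2}$ (again Lemma \ref{L4.6}) gives $[B^{-1}_\nu(Q_{\nu+1}-Q_{\nu})B_\nu]_{s,\alpha}^{D_\nu,\sigma_{\nu+1}}\le c(n,\beta,d,s)(1+c(\alpha,s)\epsilon_0^{1/2})^2\,\varepsilon\,\sigma_\nu^{\beta}$.

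Finally I would absorb $\sigma_\nu^{\beta}$ into $\epsilon_\nu$. Since $\sigma_\nu=\epsilon_{\nu-1}^{t_1}$ with $0<\epsilon_{\nu-1}<1$, Assumption B5 ($\beta\ge\frac{3}{2t_1}$, i.e.\ $t_1\beta\ge\frac32$) gives $\sigma_\nu^{\beta}=\epsilon_{\nu-1}^{t_1\beta}\le\epsilon_{\nu-1}^{3/2}=\epsilon_\nu$ by B4, so the right-hand side above is $\le c(n,\beta,d,s)(1+c(\alpha,s)\epsilon_0^{1/2})^2\,\varepsilon\,\epsilon_\nu\le\frac12\epsilon_\nu$ by the hypothesis $c(n,\beta,d,s)(1+c(\alpha,s)\epsilon_0^{1/2})^2\varepsilon\le\frac12$. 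Hermitianity of $B^{-1}_\nu(Q_{\nu+1}-Q_{\nu})B_\nu$ on $\T^n$ follows since $P$, hence each $S_\sigma P$, is Hermitian and $B^{-1}_\nu=\overline B^T_\nu$ there by (\ref{inverseM}). I expect the only delicate point to be pure bookkeeping: keeping the stronger $|\cdot|_{s,\alpha+}$ norm on the $B_\nu$-type factor in every product so that Lemma \ref{daishu}(i) applies, and checking all norms are taken over the common domain $\T^n_{\sigma_{\nu+1}}\times D_\nu$; no new estimate is required beyond Lemmas \ref{daishu01}, \ref{daishu}, \ref{PP}, \ref{L4.6} and the iteration constraints B2, B4, B5.
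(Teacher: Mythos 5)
Your proposal is correct and follows essentially the same route as the paper: the same bound on $[Q_{\nu+1}-Q_{\nu}]_{s,\alpha}^{D_\nu,\sigma_{\nu+1}}$ via Lemma \ref{PP}, the same peeling off of $B_\nu$ and $B_\nu^{-1}$ one at a time through the decompositions $(Q_{\nu+1}-Q_{\nu})B_\nu=(Q_{\nu+1}-Q_{\nu})(B_\nu-Id)+(Q_{\nu+1}-Q_{\nu})$ and its left analogue, using Lemma \ref{daishu01}/\ref{daishu} together with the $\epsilon_0^{1/2}$ bounds of Lemma \ref{L4.6}, and the same absorption $\sigma_\nu^{\beta}\leq\epsilon_{\nu-1}^{3/2}=\epsilon_\nu$ from B3--B5. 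Your write-up merely makes explicit what the paper leaves implicit in the phrase ``under Assumptions B2-B5,'' and the added remark on Hermitianity, while not part of this lemma's statement, is consistent with the paper's subsequent observation.
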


Note that $\widetilde{P}_{\nu}=P_{\nu}+B^{-1}_\nu(Q_{{\nu+1}}-Q_{{\nu}})B_\nu$, then it is easy to check that $\widetilde{P}_{\nu}$ is Hermitian. Lemma \ref{psigmanuandnuminus1} is obtained immediately from (\ref{homoP01}) and Lemma \ref{L4.5}.   $\Box$

 Go back to the Hamiltonian (\ref{Hnu+1}),  we write $\widetilde{p}_{\nu}=\Gamma\widetilde{p}_{\nu}+r_{\nu}$ and $\widetilde{P}_{\nu}=\Gamma\widetilde{P}_{\nu}+R_{\nu}$ respectively,
where
\begin{eqnarray*}
  \Gamma\widetilde{P}_{\nu}(\theta,\omega):= \sum\limits_{|k|\leq K_{\nu}} \widetilde{P}^k_{\nu}(\omega)e^{{{\rm i}k\cdot\theta}},\qquad
   R_{\nu}(\theta,\omega):= \sum\limits_{|k|> K_{\nu}} \widetilde{P}^k_{\nu}(\omega)e^{{{\rm i}k\cdot\theta}},
\end{eqnarray*}
and define $
\widetilde{N}_{\nu}(\omega)\in \mathcal{NF}
$ satisfying
\begin{eqnarray}\label{ntilde}
\left(\widetilde{N}_{\nu}(\omega)\right)_{[a]}:= \left(\widetilde{P}^0_{\nu}(\omega)\right)_{[a]}.
\end{eqnarray}
In the following we will  use $\Phi_{\nu+1}=X_{f_{\nu+1}}^1$ with $f_{\nu+1}=\la \xi,F_{\nu+1}(\theta)\eta\ra$ to put $\Gamma\widetilde{p}_{\nu}$ into normal form.
Assume $f_{\nu+1}=\la \xi,F_{\nu+1}(\theta)\eta\ra$ satisfying
\begin{equation}\label{homoeq03}
\Gamma \tilde{p}_{\nu}-\la\xi,\widetilde{N}_{\nu}\eta\ra+\{h_{\nu},\ f_{\nu+1}\}=0,
\end{equation}
then under $\Phi_{\nu+1}(\xi,\eta)=X_{f_{\nu+1}}^1(\xi,\eta)=(e^{-\mathrm{i}F^T_{\nu+1}}\xi,e^{\mathrm{i}F_{\nu+1}}\eta)$, we have
$
H^{(\nu+1)}\circ \Phi^{\nu}\circ \Phi_{\nu+1}%&=& ( h+p^{(\nu+1)})\circ \Phi^{\nu+1}\\
= h_{\nu+1}+p_{\nu+1},
$
where
$h_{\nu+1}=\la\omega,y\ra+\la \xi,N_{\nu+1}\eta\ra\ {\rm with}\ N_{\nu+1}=N_{\nu}+\widetilde{N}_{\nu}$,
and $p_{\nu+1}=\la \xi,{P}_{\nu+1}(\theta,\omega)\eta\ra$
with
\begin{eqnarray}\label{newP}
P_{\nu+1}&=&-\mathrm{i}\int_0^1e^{-\mathrm{i}tF_{\nu+1} }[t\Gamma \widetilde{P}_{\nu}+(1-t)\widetilde{N}_\nu,F_{\nu+1}]e^{\mathrm{i}tF_{\nu+1} }dt+
e^{-\mathrm{i}F_{\nu+1} }R_{\nu}e^{\mathrm{i}F_{\nu+1} }.
\end{eqnarray}
  Therefore,  we obtain
$$H^{(\nu+1)}\circ \Phi^{\nu+1}=h_{\nu+1}+p_{\nu+1}=\la\omega,y\ra+\la\xi, N_{\nu+1}(\omega)\eta\ra+\la \xi,{P}_{\nu+1}(\theta,\omega)\eta\ra.$$
In the following we will give the explicit  estimates on  $F_{\nu+1}$, $N_{\nu+1}$,
$P_{\nu+1}$ and $\Phi_{\nu+1}-id$.\\
First of all, (\ref{homoeq03}) is
equivalent to
\begin{eqnarray}\label{homoequations}
\omega\cdot\nabla_\theta F_{\nu+1}(\theta)-i[N_{\nu},F_{\nu+1}(\theta)]+\Gamma\widetilde{P}_{\nu}(\theta)=\widetilde{N}_{\nu}+R_\nu(\theta).
\end{eqnarray}
From Proposition \ref{pro4.1} and Assumption {\rm B4}, we have from (\ref{homoequations})
\begin{eqnarray}\label{homon02}
[N_\nu-N_0]_{s,\alpha}^{D_\nu}\leq\sum_{j=1}^\nu[N_j-N_{j-1}]_{s,\alpha}^{D_\nu}\leq\epsilon_0+\cdots+\epsilon_{\nu-1}\leq 2\epsilon_0\leq\frac{c_0}{4}.
\end{eqnarray}
It follows that
\begin{eqnarray*}\label{homon02-1}
\|\partial_\omega^j(N_\nu-N_0)_{[a]}\|w_a^{2\alpha}\leq 2\epsilon_0\ j=0,1.
\end{eqnarray*}
From Proposition \ref{pro4.1}, together with (\ref{homon02}), Lemma \ref{psigmanuandnuminus1}  and Assumptions {\rm B2-B6}, if
$\kappa_\nu\leq\frac{c_0}{2}$ and $K_\nu\geq1$ then there exists a subset $D_{\nu+1}\subset D_\nu$ with ${\rm Meas}(D_\nu\setminus D_{\nu+1})\leq c
K_\nu^{\gamma_1}\kappa_\nu^{\gamma_2}$, and there exist $\widetilde{N}_\nu\in \mathcal{M}_{s,\alpha}(D_{\nu+1})\cap\mathcal{NF},$ $F_{\nu+1}\in
\mathcal{M}_{s,\alpha}^+{(D_{\nu+1},\sigma_{\nu+2})}$, $R_\nu\in \mathcal{M}_{\alpha}{(D_{\nu+1},\sigma_{\nu+2})}$, $\mathcal{C}^1$ in $\omega$ and analytic in $\theta$, such that
(\ref{homoequations}) holds for all  $(\theta,\omega)\in \T^n_{\sigma_{\nu+2}}\times D_{\nu+1}$ and
\begin{eqnarray}
 \label{homoFn}
[F_{\nu+1}]_{s,\alpha+}^{D_{\nu+1},\sigma_{\nu+2}}&\leq&  \frac{c(n,d,s,\alpha)\epsilon_\nu K_\nu^{d+1}}{\kappa_\nu^{2+d/\alpha}(\sigma_{\nu+1}-\sigma_{\nu+2})^{n}}\leq c(n,d,s,\alpha)\epsilon_\nu^{\frac{13}{24}}
;\\ \label{homoNn}
[\widetilde{N}_\nu]_{s,\alpha}^{D_{\nu+1}}&\leq&  [\widetilde{P}_{\nu}]_{s,\alpha}^{D_{\nu},\sigma_{\nu+1}}\leq\epsilon_\nu.\\
\label{homoRn}[R_\nu]_{s,\alpha}^{D_{\nu+1},\sigma_{\nu+2}}&\leq&
\frac{c(n,d,s,\alpha)\epsilon_\nu K_{\nu}^{1+d/2}e^{-(\sigma_{\nu+1}-\sigma_{\nu+2})K_{\nu}/2}}{\kappa_\nu^{1+d/2\alpha}(\sigma_{\nu+1}-\sigma_{\nu+2})^{n}}\leq \epsilon_{\nu+1}.
\end{eqnarray}
$\widetilde{N}_\nu,$ $F_{\nu+1}(\theta) $  and $R_\nu(\theta) $ are Hermitian when $\theta\in\T^n$.
Moreover, from Lemmas \ref{daishu01}, \ref{daishu} and (\ref{homoFn}),
\begin{Lemma}\label{map}
If $4c(\alpha,s)\varepsilon^{\frac{1 }{24}}\leq 1$, then the symplectic map $\Phi_{\nu+1}(\theta,\omega)$ defined in $\T^n_{\sigma_{\nu+2}}\times D_{\nu+1}$  satisfies
\begin{eqnarray*}
\|\Phi_{\nu+1}-id\|^*_{\mathfrak{L}(Y_{s'},Y_{s'+2\alpha})}\leq
4c(\alpha,s)\epsilon_\nu^{\frac{13}{24}}\leq \epsilon_\nu^{\frac{1 }{2 }}.%\label{homoPhin}
\end{eqnarray*}
for all $0\leq s'\leq s$.
\begin{proof}
Note that $\Phi_{\nu+1}(\xi,\eta)=X_{f_{\nu+1}}^1(\xi,\eta)=(e^{-\mathrm{i}F^T_{\nu+1}}\xi,e^{\mathrm{i}F_{\nu+1}}\eta)$, where
\begin{eqnarray*}
 \|\partial^k_\omega (e^{\mathrm{i}F_{\nu+1}}-Id)\eta\|_{s'+2\alpha} &\leq&  c(\alpha,s)[e^{\mathrm{i}F_{\nu+1}}-Id]_{s,\alpha+}\|\eta\|_{s'}\\
 &\leq& c(\alpha,s)[F_{\nu+1}]_{s,\alpha+}^{D_{\nu+1},\sigma_{\nu+2}}e^{c(\alpha,s )[F_{\nu+1}]_{s,\alpha+}^{D_{\nu+1},\sigma_{\nu+2}}} \|\eta\|_{s'}\\
 &\leq& 2c(\alpha,s)\epsilon_\nu^{\frac{13}{24}}\|\eta\|_{s'}
 \end{eqnarray*}
for $k=0,1$ and $0\leq s'\leq s$ by (\ref{homoFn}), Lemmas \ref{daishu01} and \ref{daishu}. Similarly,
\begin{eqnarray*}
 \|\partial^k_\omega (e^{-\mathrm{i}F^T_{\nu+1}}-Id)\xi\|_{s'+2\alpha} \leq 2c(\alpha,s)\epsilon_\nu^{\frac{13}{24}}\|\xi\|_{s'}
 \end{eqnarray*}
for $k=0,1$.
Therefore, we have
$
\|\Phi_{\nu+1}-id\|^*_{\mathfrak{L}(Y_{s'},Y_{s'+2\alpha})}
\leq c(\alpha,s,n,d)\epsilon_\nu^{\frac{13}{24}}. \label{homoPhin}
$
\end{proof}
\end{Lemma}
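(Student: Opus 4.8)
The plan is to bound the two diagonal blocks of $\Phi_{\nu+1}-id$ separately. Recall that $\Phi_{\nu+1}=X_{f_{\nu+1}}^1$ acts on $Y_0$ by $(\xi,\eta)\mapsto(e^{-\mathrm{i}F_{\nu+1}^T}\xi,\,e^{\mathrm{i}F_{\nu+1}}\eta)$, so $(\Phi_{\nu+1}-id)(\xi,\eta)=\big((e^{-\mathrm{i}F_{\nu+1}^T}-Id)\xi,\,(e^{\mathrm{i}F_{\nu+1}}-Id)\eta\big)$, and it suffices to control $e^{\pm\mathrm{i}F_{\nu+1}}-Id$ and $e^{\pm\mathrm{i}F_{\nu+1}^T}-Id$ as operators $\ell_{s'}^2\to\ell_{s'+2\alpha}^2$, together with their $\omega$-derivatives, uniformly for $\theta\in\T^n_{\sigma_{\nu+2}}$.

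First I would quote the homological estimate (\ref{homoFn}), namely $[F_{\nu+1}]_{s,\alpha+}^{D_{\nu+1},\sigma_{\nu+2}}\leq c(n,d,s,\alpha)\epsilon_\nu^{\frac{13}{24}}$, which by Assumption B4 and the standing smallness hypothesis is $\leq\epsilon_0^{\frac{13}{24}}$, hence as small as we wish. Since $F_{\nu+1}$ (and $F_{\nu+1}^T$, with the same $|\cdot|_{s,\alpha+}$-norm) is Hermitian and lies in $\mathcal{M}_{s,\alpha}^+(D_{\nu+1},\sigma_{\nu+2})$, estimate (\ref{exponorm02}) of Lemma \ref{daishu} gives
$$[e^{\pm\mathrm{i}F_{\nu+1}}-Id]_{s,\alpha+}^{D_{\nu+1},\sigma_{\nu+2}}\leq [F_{\nu+1}]_{s,\alpha+}^{D_{\nu+1},\sigma_{\nu+2}}\,e^{c(\alpha,s)[F_{\nu+1}]_{s,\alpha+}^{D_{\nu+1},\sigma_{\nu+2}}}\leq 2c(n,d,s,\alpha)\,\epsilon_\nu^{\frac{13}{24}},$$
where the bound $e^{c(\alpha,s)[F_{\nu+1}]_{s,\alpha+}}\leq 2$ is precisely where the hypothesis $4c(\alpha,s)\varepsilon^{\frac{1}{24}}\leq 1$, together with $\epsilon_\nu\leq\epsilon_0\leq C\varepsilon$, enters. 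Then Lemma \ref{daishu01}(iv) converts this matrix estimate into $\|(e^{\pm\mathrm{i}F_{\nu+1}}-Id)\zeta\|_{s'+2\alpha}\leq c(\alpha,s)\,[e^{\pm\mathrm{i}F_{\nu+1}}-Id]_{s,\alpha+}\,\|\zeta\|_{s'}$ for all $0\leq s'\leq s$; the $k=1$ part of the $\|\cdot\|^*$-norm is the same computation, since the exponential estimate of Lemma \ref{daishu} is already phrased for the norm $[\cdot]_{s,\alpha+}^{D,\sigma}$ that encodes the $|k|\leq 1$ $\omega$-derivatives, equivalently using $\partial_\omega e^{\mathrm{i}F_{\nu+1}}=\mathrm{i}\int_0^1 e^{\mathrm{i}tF_{\nu+1}}(\partial_\omega F_{\nu+1})e^{\mathrm{i}(1-t)F_{\nu+1}}\,dt$. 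Adding the $\xi$- and $\eta$-contributions and absorbing constants yields $\|\Phi_{\nu+1}-id\|^*_{\mathfrak{L}(Y_{s'},Y_{s'+2\alpha})}\leq c(\alpha,s,n,d)\epsilon_\nu^{\frac{13}{24}}$, and the final bound $\leq\epsilon_\nu^{\frac{1}{2}}$ follows because $c(\alpha,s,n,d)\epsilon_\nu^{\frac{1}{24}}\leq c(\alpha,s,n,d)(C\varepsilon)^{\frac{1}{24}}\leq 1$ once $\varepsilon$ is small.

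I do not expect any genuine difficulty here: the lemma is a routine corollary of estimates already at hand. The only items demanding a moment of care are that matrix transposition leaves the $|\cdot|_{s,\alpha+}$-norm unchanged, so the $\xi$-block is handled identically to the $\eta$-block, and that the $\mathcal{C}^1$-in-$\omega$ regularity of $F_{\nu+1}$ survives composition with the exponential series; both are immediate from Lemma \ref{daishu}.
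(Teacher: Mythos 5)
Your proposal is correct and follows essentially the same route as the paper: decompose $\Phi_{\nu+1}-id$ into the two blocks $e^{-\mathrm{i}F_{\nu+1}^T}-Id$ and $e^{\mathrm{i}F_{\nu+1}}-Id$, bound their $[\cdot]_{s,\alpha+}$-norms via (\ref{exponorm02}) together with (\ref{homoFn}), and pass to the operator norm $\ell^2_{s'}\to\ell^2_{s'+2\alpha}$ by Lemma \ref{daishu01} iv), the $\omega$-derivatives being absorbed in the weighted norm. Your added remarks on where the smallness hypothesis enters and on the invariance of the norm under transposition only make explicit what the paper leaves implicit.
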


\noindent{\it Estimates on the new error term.}
Recall
$
p_{\nu+1}
=\la \xi,P_{\nu+1}(\theta)\eta\ra
$
with (\ref{newP}),
we have
\begin{Lemma}\label{pnu+1}
Under Assumptions {\rm B2} - {\rm B6}, if $\varepsilon\leq c^{-1}(n,d,s,\alpha)\ll1$ then $P_{\nu+1}(\theta)$ is Hermitian when $\theta\in\T^n$ and
$
[P_{\nu+1}]_{s,\alpha}^{D_{\nu+1},\sigma_{\nu+2}}\leq \frac12\epsilon_{\nu+1}.
$
\end{Lemma}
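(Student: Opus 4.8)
The plan is to write $P_{\nu+1}=I+II$ according to (\ref{newP}), where
\[
I:=-\mathrm{i}\int_0^1 e^{-\mathrm{i}tF_{\nu+1}}\big[\,t\,\Gamma\widetilde{P}_{\nu}+(1-t)\widetilde{N}_{\nu},\,F_{\nu+1}\,\big]e^{\mathrm{i}tF_{\nu+1}}\,dt,\qquad II:=e^{-\mathrm{i}F_{\nu+1}}R_{\nu}e^{\mathrm{i}F_{\nu+1}},
\]
and to establish the two assertions separately. For Hermiticity, I would first note that $\Gamma\widetilde{P}_{\nu}$ is Hermitian on $\T^n$ because $\widetilde{P}_{\nu}$ is (Lemma \ref{psigmanuandnuminus1}), while $\widetilde{N}_{\nu}$, $F_{\nu+1}$ and $R_{\nu}$ are Hermitian on $\T^n$ by Proposition \ref{pro4.1}. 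A commutator of two Hermitian matrices is anti-Hermitian, so $-\mathrm{i}[\,\cdot\,,\,\cdot\,]$ is Hermitian; since $F_{\nu+1}$ is Hermitian, $e^{\pm\mathrm{i}tF_{\nu+1}}$ is unitary with $(e^{\mathrm{i}tF_{\nu+1}})^{*}=e^{-\mathrm{i}tF_{\nu+1}}$, so conjugation by it preserves Hermiticity, and the integral over $t$ of a Hermitian-valued function is Hermitian. Hence $I$ and $II$, and therefore $P_{\nu+1}$, are Hermitian for $\theta\in\T^n$.

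For the bound on $I$: writing $\Gamma\widetilde{P}_{\nu}=\widetilde{P}_{\nu}-R_{\nu}$ and using Lemma \ref{psigmanuandnuminus1}, (\ref{homoRn}) and (\ref{homoNn}) gives $[\,t\Gamma\widetilde{P}_{\nu}+(1-t)\widetilde{N}_{\nu}\,]^{D_{\nu+1},\sigma_{\nu+2}}_{s,\alpha}\le 2\epsilon_{\nu}$ for $t\in[0,1]$. Together with $[F_{\nu+1}]^{D_{\nu+1},\sigma_{\nu+2}}_{s,\alpha+}\le c(n,d,s,\alpha)\epsilon_{\nu}^{13/24}$ from (\ref{homoFn}), the algebra estimate of Lemma \ref{daishu} bounds the commutator in $\mathcal{M}_{s,\alpha}$ by $C\epsilon_{\nu}\cdot\epsilon_{\nu}^{13/24}=C\epsilon_{\nu}^{37/24}$, and (\ref{exponorm01}) (with $e^{c(\alpha,s)[F_{\nu+1}]_{s,\alpha+}}\le2$ for $\varepsilon$ small) shows that conjugating by $e^{\pm\mathrm{i}tF_{\nu+1}}$ and integrating over $t$ cost only a bounded factor, so $[I]^{D_{\nu+1},\sigma_{\nu+2}}_{s,\alpha}\le C\epsilon_{\nu}^{37/24}=C\epsilon_{\nu}^{1/24}\,\epsilon_{\nu+1}\le\frac14\epsilon_{\nu+1}$ once $\varepsilon$ (hence $\epsilon_{0}$, hence every $\epsilon_{\nu}$) is small. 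The exponent $13/24$ in (\ref{homoFn}) is designed precisely so that this quadratic term beats $\epsilon_{\nu+1}=\epsilon_{\nu}^{3/2}$ by the margin $\epsilon_{\nu}^{1/24}$.

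The only genuinely delicate point is the bound on $II$: the estimate $[R_{\nu}]\le\epsilon_{\nu+1}$ recorded in (\ref{homoRn}) is \emph{not} directly usable, because conjugating by $e^{\pm\mathrm{i}F_{\nu+1}}$ inflates the norm by a factor $e^{2c(\alpha,s)[F_{\nu+1}]_{s,\alpha+}}$ slightly larger than $1$, which would only give $[II]\le(1+\text{small})\epsilon_{\nu+1}$. One must therefore reopen the middle expression in (\ref{homoRn}): by B2 one has $\sigma_{\nu+1}-\sigma_{\nu+2}\ge\frac12\sigma_{\nu+1}$, hence by B6 the factor $e^{-(\sigma_{\nu+1}-\sigma_{\nu+2})K_{\nu}/2}\le e^{-\sigma_{\nu+1}K_{\nu}/4}\le\epsilon_{\nu}$, while B3 gives $K_{\nu}^{1+d/2}\le K_{\nu}^{d+1}\le\epsilon_{\nu}^{-1/8}$, $\kappa_{\nu}^{-(1+d/(2\alpha))}\le\epsilon_{\nu}^{-1/12}$ and $(\sigma_{\nu+1}-\sigma_{\nu+2})^{-n}\le 2^{n}\epsilon_{\nu}^{-1/6}$. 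Multiplying, $[R_{\nu}]^{D_{\nu+1},\sigma_{\nu+2}}_{s,\alpha}\le C2^{n}\epsilon_{\nu}^{\,2-1/8-1/6-1/12}=C2^{n}\epsilon_{\nu}^{39/24}=C2^{n}\epsilon_{\nu}^{3/24}\,\epsilon_{\nu+1}\le\frac18\epsilon_{\nu+1}$ for $\varepsilon$ small, whence by (\ref{exponorm01}), $[II]^{D_{\nu+1},\sigma_{\nu+2}}_{s,\alpha}\le2\cdot\frac18\epsilon_{\nu+1}=\frac14\epsilon_{\nu+1}$. Adding the two contributions yields $[P_{\nu+1}]^{D_{\nu+1},\sigma_{\nu+2}}_{s,\alpha}\le\frac12\epsilon_{\nu+1}$. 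The remaining work is purely bookkeeping: collect the constants from Lemmas \ref{daishu01}, \ref{daishu} and from (\ref{homoFn})--(\ref{homoRn}) into one $c(n,d,s,\alpha)$ and impose the single smallness condition $\varepsilon\le c^{-1}(n,d,s,\alpha)\ll1$, under which $e^{c(\alpha,s)[F_{\nu+1}]_{s,\alpha+}}\le2$, $C\epsilon_{\nu}^{1/24}\le\frac14$ and $C2^{n}\epsilon_{\nu}^{3/24}\le\frac18$ hold simultaneously for every $\nu$ (recall $\epsilon_{\nu}\le\epsilon_{0}\le C\varepsilon$).
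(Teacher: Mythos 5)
Your proposal is correct and follows essentially the same route as the paper: split $P_{\nu+1}$ into the commutator integral and the conjugated remainder via (\ref{newP}), bound the first by $C\epsilon_\nu^{13/24}\cdot\epsilon_\nu=C\epsilon_\nu^{1/24}\epsilon_{\nu+1}$ using (\ref{homoFn}), (\ref{homoNn}) and Lemma \ref{daishu}, and bound the second by reopening the middle expression of (\ref{homoRn}) with Assumptions B2, B3, B6 to gain the extra power $\epsilon_\nu^{1/8}$ over $\epsilon_{\nu+1}$, exactly as the paper's chain $e^{4}[R_\nu]\le e^{4}\epsilon_\nu^{1/8}\epsilon_\nu^{3/2}$ does. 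The Hermiticity argument is also the standard one the paper leaves implicit.
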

\begin{proof} It is easy to check that $P_{\nu+1}(\theta)$ is Hermitian from (\ref{newP}).
The estimate is divided into two parts.\\
1) From (\ref{exponorm01}), (\ref{canshu05}),  (\ref{homoFn}) and (\ref{homoRn}),
\begin{eqnarray*}
[e^{-\mathrm{i}F_{\nu+1}}R_{\nu}e^{\mathrm{i}F_{\nu+1}}]_{s,\alpha}^{D_{\nu+1},\sigma_{\nu+2}}
&\leq & e^{4c(\alpha)[F_{\nu+1}]_{s,\alpha+}^{D_{\nu+1},\sigma_{\nu+2}}}[R_{\nu}]_{s,\alpha}^{D_{\nu+1},\sigma_{\nu+2}} \\
&\leq & e^{4}[R_{\nu}]_{s,\alpha}^{D_{\nu+1},\sigma_{\nu+2}}\leq e^{4}\epsilon_\nu^{\frac{1 }{8}}\epsilon_\nu^{\frac{3 }{2}}
\leq\frac{1}{4}\epsilon_{\nu}^{\frac32}.
\end{eqnarray*}
2) For $0\leq t\leq1$, note that $\Gamma \widetilde{P}_{\nu}=\widetilde{P}_{\nu}-R_{\nu},$
\begin{eqnarray*}
&&[e^{-\mathrm{i}tF_{\nu+1}}[t\Gamma \widetilde{P}_{\nu}+(1-t)\widetilde{N}_\nu,F_{\nu+1}]e^{\mathrm{i}tF_{\nu+1}}]_{s,\alpha}^{D_{\nu+1},\sigma_{\nu+2}}\\
&\leq
&e^{4c(\alpha)[F_{\nu+1}]_{s,\alpha+}^{D_{\nu+1},\sigma_{\nu+2}}}[t\Gamma \widetilde{P}_{\nu}+(1-t)\widetilde{N}_\nu,F_{\nu+1}]_{s,\alpha}^{D_{\nu+1},\sigma_{\nu+2}}\\
&\leq
&2c(\alpha)e^{4c(\alpha)[F_{\nu+1}]_{s,\alpha+}^{D_{\nu+1},\sigma_{\nu+2}}}[F_{\nu+1}]^{D_{\nu+1},\sigma_{\nu+2}}_{s,\alpha+}
([\Gamma \widetilde{P}_{\nu}]^{D_{\nu+1},\sigma_{\nu+2}}_{s,\alpha}+[\widetilde{N}_\nu]^{D_{\nu+1}}_{s,\alpha})\\
&\leq &4c(\alpha)e^{4}\epsilon_{\nu}^{\frac{1}{24}}\epsilon_{\nu}^{\frac32}\leq \frac{1}{4}\epsilon_\nu^{\frac32}\end{eqnarray*}
 by  Lemma \ref{daishu}, (\ref{exponorm01}), (\ref{canshu05}), (\ref{homoFn}), (\ref{homoNn}), (\ref{homoRn}).
\end{proof}

\subsection{Iteration Lemma}\label{iteration}

To iterate the KAM steps infinitely often we choose sequences for the pertinent parameters.  Set $\epsilon_0=2\varepsilon c(n,\beta,d,s)$, $\sigma_0=1$ and suppose
\begin{eqnarray}
\epsilon_{\nu+1}=\epsilon_{\nu}^{\frac{3}{2}},\sigma_{\nu+1}=\epsilon_{\nu}^{\frac{3}{2\beta}}, \kappa_{\nu}=\epsilon_{\nu}^{\frac{1}{6(2+d/\alpha)}},
K_{\nu}=8|\ln \epsilon_{\nu}|\epsilon_{\nu}^{-\frac{3}{2\beta}}\label{canshudingyi}
\end{eqnarray}
 for $\nu\geq 0$. From a straightforward computation in Subsection \ref{KAMstep} we have
\begin{Lemma}[Iterative Lemma]\label{itelm}
Let
$0<\delta<\frac{\gamma_2}{24}$ and
\begin{eqnarray}
\beta>\beta_*=\max\{9(2+d/\alpha)\frac{\gamma_1}{\gamma_2-24\delta},\ 9n,\ 12(d+1)\}\label{betaxinzhi}
\end{eqnarray}
with
 $\gamma_1=\max\{d+n+2,\alpha_1\},\ \gamma_2=\frac{\alpha\alpha_2}{4+d+2\alpha\alpha_2}$.
If $0<\varepsilon\leq\varepsilon _*(n,d,s,\delta)\ll 1$
then all the iteration series $\epsilon_{\nu},  \sigma_{\nu}, \kappa_{\nu}$ and $K_{\nu}$
satisfy Assumptions $\mathrm{B1} - \mathrm{B6}$,
therefore we have the followings:\\
 Suppose that in $\T^n_{\sigma_{\nu+1}}\times\R^n\times Y_s\times D_\nu$,
$$H^{(\nu)}\circ \Phi^{\nu}(\theta,y,\xi,\eta,\omega)=(h+q_\nu)\circ \Phi^{\nu}=h_{\nu}+p_{\nu},$$
where
$\Phi^{\nu}=\Phi_1\circ \Phi_2\circ\cdots  \Phi_{\nu}$ and
$\Phi_{j}=X_{f_j}^1: Y_{s'}\rightarrow Y_{s'}$ for all $0\leq s'\leq s$, $\omega \in D_{j}$ and $\theta\in \T_{\sigma_{j+1}}$ and
satisfies
$$\|\Phi_{j}-id\|^*_{\mathfrak{L}(Y_{s'},Y_{s'+2\alpha})}\leq \epsilon_{j-1}^{\frac{1}{2}}, j=1, \cdots, \nu,$$
and $h_{\nu}=\la \omega,y\ra+\la\xi,N_\nu \eta\ra$ in normal form and $p_{\nu}=\la \xi,{P}_{\nu}\eta\ra$ where
$N_{\nu}-N_0\in \mathcal{M}_{s,\alpha}(D_{\nu})$, $P_{\nu}\in \mathcal{M}_{s,\alpha}(D_{\nu},\sigma_{\nu+1})$  and
the following estimates hold:
\begin{eqnarray*}
\nonumber
{\rm Meas}(D_{j-1}\setminus D_{j}) &\leq&  cK_{j-1}^{\gamma_1}\kappa_{j-1}^{\gamma_2};\\ \nonumber
[F_{j}]_{s,\alpha+}^{D_{j}, \sigma_{j+1}}&\leq&  c(n,d,s,\alpha)  \epsilon_{j-1}^{\frac{13}{24}};\\ \nonumber
[N_j-N_{j-1}]_{s,\alpha}^{D_{j-1},\sigma_j}&\leq&
  \epsilon_{j-1};\\ \nonumber
[P_{j}]_{s,\alpha}^{D_{j}, \sigma_{j+1}}&\leq&    \frac12\epsilon_{j}.
\end{eqnarray*}
Then  there exist $ D_{\nu+1}\subset D_\nu$ and a mapping
$\Phi_{\nu+1}=X_{f_{\nu+1}}^1: Y_{s'}\rightarrow Y_{s'}$ for all $0\leq s'\leq s$,   $\omega\in D_{\nu+1},\ \theta\in \T_{\sigma_{\nu+2}}$,
and
$$H^{(\nu+1)}\circ \Phi^{\nu}\circ \Phi_{\nu+1}=(h+q_{\nu+1})\circ \Phi^{\nu+1}=h_{\nu+1}+p_{\nu+1}$$
and $h_{\nu+1}=\la \omega,y\ra+\la\xi,N_{\nu+1} \eta\ra$ in normal form and $p_{\nu+1}=\la \xi,{P}_{\nu+1}\eta\ra$where
$N_{\nu+1}-N_0\in \mathcal{M}_{s,\alpha}(D_{\nu+1})$, $P_{\nu+1}\in \mathcal{M}_{s,\alpha}(D_{\nu+1},\sigma_{\nu+2})$  and
the same estimates hold for $j\leq\nu+1$.\\
\indent Moreover, since $N_{\nu}, P_{\nu}(\theta)$ are Hermitian when $\theta\in\T^n$, so are $N_{\nu+1}, P_{\nu+1}(\theta), F_{\nu+1}(\theta)$. \quad\quad$\Box$
\end{Lemma}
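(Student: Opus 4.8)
The plan is to treat Lemma \ref{itelm} as the inductive step of the smoothing KAM scheme: all the analytic work has already been packaged (Proposition \ref{pro4.1} solves the homological equation, Lemmas \ref{L4.6}--\ref{L4.5} control the conjugation of the tail $Q_{\nu+1}-Q_\nu$ of the analytic approximation, Lemma \ref{psigmanuandnuminus1} bounds the enlarged perturbation $\widetilde P_\nu$, and Lemmas \ref{map} and \ref{pnu+1} estimate the new transformation and the new error). What remains is therefore (i) to check that the explicit choices (\ref{canshudingyi}) satisfy Assumptions $\mathrm{B1}$--$\mathrm{B6}$ whenever (\ref{betaxinzhi}) holds, and (ii) to run the KAM step of Subsection \ref{KAMstep} once, collecting the estimates.

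For step (i) one first records $\epsilon_\nu=\epsilon_0^{(3/2)^\nu}$, so that $\sigma_{\nu+1}^\beta=\epsilon_\nu^{3/2}=\epsilon_{\nu+1}$ and hence $\sum_\nu\sigma_\nu^\beta=1+\sum_{\nu\ge1}\epsilon_\nu<\infty$, which is $\mathrm{B1}$; $\mathrm{B4}$ holds by definition; $\mathrm{B2}$ reduces to $\epsilon_\nu^{1/(2\beta)}\le\tfrac12$, true for $\varepsilon$ small; and since $K_\nu\sigma_{\nu+1}=8|\ln\epsilon_\nu|$ one gets $e^{-K_\nu\sigma_{\nu+1}/4}=\epsilon_\nu^2\le\epsilon_\nu$, which is $\mathrm{B6}$. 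For $\mathrm{B3}$ read off $t_1=\tfrac{3}{2\beta}$ and $t_2=\tfrac1{6(2+d/\alpha)}$, so that $(2+d/\alpha)t_2=\tfrac16$ automatically and $nt_1\le\tfrac16$ exactly when $\beta\ge9n$; the requirement $K_{\nu-1}^{d+1}\le\epsilon_{\nu-1}^{-1/8}$ becomes $(8|\ln\epsilon_{\nu-1}|)^{d+1}\epsilon_{\nu-1}^{-3(d+1)/(2\beta)}\le\epsilon_{\nu-1}^{-1/8}$, which holds for $\varepsilon$ small once $\tfrac{3(d+1)}{2\beta}<\tfrac18$, i.e. $\beta>12(d+1)$; and $\mathrm{B5}$ is the identity $\beta=\tfrac3{2t_1}$. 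Thus the three lower bounds in (\ref{betaxinzhi}) are precisely what makes $\mathrm{B1}$--$\mathrm{B6}$ hold simultaneously, and the accumulated measure loss $\sum_\nu cK_\nu^{\gamma_1}\kappa_\nu^{\gamma_2}$ is dominated by its first term and sums to $\le c\varepsilon^{3\delta/(2+d/\alpha)}$ exactly because $\beta>9(2+d/\alpha)\gamma_1/(\gamma_2-24\delta)$.

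For step (ii) I would proceed as in Subsection \ref{KAMstep}: write $H^{(\nu+1)}=H^{(\nu)}+(q_{\nu+1}-q_\nu)$ and push it forward by $\Phi^\nu$ to get $h_\nu+\widetilde p_\nu$ with $\widetilde P_\nu=P_\nu+B_\nu^{-1}(Q_{\nu+1}-Q_\nu)B_\nu$; Lemma \ref{psigmanuandnuminus1} (through Lemmas \ref{L4.6} and \ref{L4.5}) gives $[\widetilde P_\nu]^{D_\nu,\sigma_{\nu+1}}_{s,\alpha}\le\epsilon_\nu$ and its Hermitian character. Splitting $\widetilde P_\nu=\Gamma\widetilde P_\nu+R_\nu$ at order $K_\nu$ and defining $\widetilde N_\nu$ by (\ref{ntilde}), Proposition \ref{pro4.1} applies—its hypotheses $[N_\nu-N_0]^{D_\nu}_{s,\alpha}\le2\epsilon_0\le c_0/4$ (by (\ref{homon02})), $\kappa_\nu\le c_0/2$, $K_\nu\ge1$ are met—producing $F_{\nu+1}\in\mathcal{M}^+_{s,\alpha}(D_{\nu+1},\sigma_{\nu+2})$ solving (\ref{homoequations}) with the bounds (\ref{homoFn})--(\ref{homoNn}); the exponents $\tfrac{13}{24}$, $\tfrac18$, $\tfrac32$ on their right-hand sides are where $\mathrm{B3}$ and $\mathrm{B6}$ enter. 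Then $\Phi_{\nu+1}=X^1_{f_{\nu+1}}$ satisfies $\|\Phi_{\nu+1}-id\|^*_{\mathfrak{L}(Y_{s'},Y_{s'+2\alpha})}\le\epsilon_\nu^{1/2}$ by Lemma \ref{map}, the normal form updates to $N_{\nu+1}=N_\nu+\widetilde N_\nu$ with $[N_{\nu+1}-N_\nu]^{D_\nu,\sigma_{\nu+1}}_{s,\alpha}\le\epsilon_\nu$, and the new error $P_{\nu+1}$ of (\ref{newP}) obeys $[P_{\nu+1}]^{D_{\nu+1},\sigma_{\nu+2}}_{s,\alpha}\le\tfrac12\epsilon_{\nu+1}$ by Lemma \ref{pnu+1}; all three are Hermitian on $\T^n$ since $\widetilde N_\nu$, $F_{\nu+1}(\theta)$, $R_\nu(\theta)$ are. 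This reproduces the hypotheses at level $\nu+1$ and closes the induction.

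I expect the only genuine difficulty to be the bookkeeping in step (i). The tension is that $\mathrm{B5}$ pins $\sigma_\nu=\epsilon_{\nu-1}^{3/(2\beta)}$ (as large as the quadratic convergence rate tolerates), while $\mathrm{B3}$ and the measure requirement both push $\beta$ upward; the thresholds $9n$, $12(d+1)$ and $9(2+d/\alpha)\gamma_1/(\gamma_2-24\delta)$ are the precise points at which these competing demands can all be satisfied for small $\varepsilon$. Once the parameter balance is verified, the rest of the lemma is a mechanical transcription of the estimates already established in Lemmas \ref{L4.6}--\ref{pnu+1} and Proposition \ref{pro4.1}.
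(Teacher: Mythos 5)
Your proposal is correct and follows essentially the same route as the paper, which itself reduces the Iterative Lemma to the KAM step of Subsection \ref{KAMstep} (Lemmas \ref{psigmanuandnuminus1}, \ref{L4.6}, \ref{L4.5}, \ref{map}, \ref{pnu+1} and Proposition \ref{pro4.1}) plus the verification that the choices (\ref{canshudingyi}) satisfy Assumptions B1--B6 under (\ref{betaxinzhi}). Your parameter bookkeeping (in particular $\sigma_{\nu+1}^{\beta}=\epsilon_{\nu+1}$, $K_\nu\sigma_{\nu+1}=8|\ln\epsilon_\nu|$ giving $e^{-K_\nu\sigma_{\nu+1}/4}=\epsilon_\nu^2$, and the thresholds $\beta\geq 9n$, $\beta>12(d+1)$) agrees with what the paper's "straightforward computation" entails.
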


\subsection{Transition to the limit and the proof of Theorem \ref{MainTheorem}}

Set $ D_\varepsilon=\bigcap_{\nu=0}^\infty  D_\nu$. From (\ref{canshudingyi}), (\ref{betaxinzhi}) and Lemma \ref{itelm} we have
$${\rm Meas}(D_0\setminus D_\varepsilon)\leq\sum_{\nu=0}^\infty cK_{\nu}^{\gamma_1}\kappa_{\nu}^{\gamma_2}\leq c(n,d,s,\beta,\delta) \varepsilon^{\frac{3\delta}{2+d/\alpha}},$$
 if $0<\varepsilon\leq \varepsilon_*(n,d,s,\beta,\delta)$.   \\
 \indent In the following we will show that
 \begin{Lemma}\label{L4.12}
 For $(\omega, \theta)\in D_{\varepsilon}\times \T^n$,
 $\{\Phi^{\nu}-id\}_\nu $ is a Cauchy sequence in $\mathfrak{L}(Y_{s'}, Y_{s'+2\alpha})$ for all $0\leq s'\leq s$.
 \end{Lemma}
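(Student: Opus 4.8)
The plan is to reduce the whole statement to the estimate on $B_{\nu_1}-B_{\nu}$ already furnished by Lemma \ref{L4.6}, combined with the embedding property of $\mathcal{M}_{s,\alpha}^{+}$ recorded in Lemma \ref{daishu01}(iv); once these are in hand the argument is a one‑line telescoping bound.

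First I would record how $\Phi^{\nu}$ acts linearly on $Y_{s'}$. Each factor $\Phi_{j}=X_{f_{j}}^{1}$ sends $(\xi,\eta)$ to $(e^{-\mathrm{i}F_{j}^{T}(\theta)}\xi,\,e^{\mathrm{i}F_{j}(\theta)}\eta)$, and by Lemma \ref{itelm} the matrix $F_{j}(\theta)$ is Hermitian for $\theta\in\T^{n}$, so $F_{j}^{T}=\overline{F_{j}}$ there and hence $e^{-\mathrm{i}F_{j}^{T}}=\overline{e^{\mathrm{i}F_{j}}}$ on $\T^{n}$. Composing $\Phi^{\nu}=\Phi_{1}\circ\cdots\circ\Phi_{\nu}$ therefore gives, for real $\theta$, the linear map $(\xi,\eta)\mapsto(\overline{B_{\nu}}\,\xi,\,B_{\nu}\eta)$ with $B_{\nu}=e^{\mathrm{i}F_{1}}\cdots e^{\mathrm{i}F_{\nu}}$ as in Lemma \ref{L4.6}. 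Consequently, for $\nu_{1}<\nu$ and $(\theta,\omega)\in\T^{n}\times D_{\varepsilon}$,
$$(\Phi^{\nu}-\Phi^{\nu_{1}})(\xi,\eta)=\big(\overline{B_{\nu}-B_{\nu_{1}}}\,\xi,\ (B_{\nu}-B_{\nu_{1}})\eta\big),$$
so the $\mathfrak{L}(Y_{s'},Y_{s'+2\alpha})$ norm of $\Phi^{\nu}-\Phi^{\nu_{1}}$ is governed entirely by that of $B_{\nu}-B_{\nu_{1}}$; the conjugation bar is harmless, since $\|\overline{M}\|=\|M\|$ and the weights entering $|\cdot|_{s,\alpha+}$ are real, whence $|\overline{A}|_{s,\alpha+}=|A|_{s,\alpha+}$.

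Next I would invoke Lemma \ref{L4.6}, whose smallness hypothesis $4\epsilon_{0}^{1/24}\le 1$ holds once $\varepsilon\le\varepsilon_{*}$ by Lemma \ref{itelm}: it yields $B_{\nu}-B_{\nu_{1}}\in\mathcal{M}_{s,\alpha}^{+}(D_{\nu},\sigma_{\nu+1})$ with $[B_{\nu_{1}}-B_{\nu}]_{s,\alpha+}^{D_{\nu},\sigma_{\nu+1}}\le\epsilon_{\nu_{1}}^{1/2}$, hence in particular $|B_{\nu}(\theta,\omega)-B_{\nu_{1}}(\theta,\omega)|_{s,\alpha+}\le\epsilon_{\nu_{1}}^{1/2}$ for every $\omega\in D_{\varepsilon}\subset D_{\nu}$ and every real $\theta$. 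Feeding this into Lemma \ref{daishu01}(iv), which says that $A\in\mathcal{M}_{s,\alpha}^{+}$ maps $\ell_{s'}^{2}$ into $\ell_{s'+2\alpha}^{2}$ with $\|A\zeta\|_{s'+2\alpha}\le c(\alpha,s)|A|_{s,\alpha+}\|\zeta\|_{s'}$ for all $0\le s'\le s$, I obtain
$$\|\Phi^{\nu}-\Phi^{\nu_{1}}\|_{\mathfrak{L}(Y_{s'},Y_{s'+2\alpha})}\le c(\alpha,s)\,\epsilon_{\nu_{1}}^{1/2},\qquad 0\le s'\le s,$$
uniformly in $\nu>\nu_{1}$ and in $(\theta,\omega)\in\T^{n}\times D_{\varepsilon}$. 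Since $\epsilon_{\nu_{1}}=\epsilon_{0}^{(3/2)^{\nu_{1}}}\to 0$ by the choice (\ref{canshudingyi}) (with $\epsilon_{0}<1$), the right‑hand side tends to $0$ as $\nu_{1}\to\infty$; therefore $\{\Phi^{\nu}-id\}_{\nu}$ is Cauchy in $\mathfrak{L}(Y_{s'},Y_{s'+2\alpha})$, and one may read off $\Phi_{\omega}^{\infty}-id=\lim_{\nu}(\Phi^{\nu}-id)$ together with the rate $\|\Phi^{\nu}-\Phi_{\omega}^{\infty}\|_{\mathfrak{L}(Y_{s'},Y_{s'+2\alpha})}\le c\,\epsilon_{\nu}^{1/2}$. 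Running the same argument with the full $[\cdot]_{s,\alpha+}^{D_{\nu},\sigma_{\nu+1}}$ norm (which already contains $\partial_{\omega}$) gives the analogous Cauchy property for the $\mathcal{C}^{1}$‑in‑$\omega$ operator norm $\|\cdot\|^{*}$ as well.

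There is no genuine obstacle beyond bookkeeping: the analytic core — the telescoped bound $[B_{\nu_{1}}-B_{\nu}]_{s,\alpha+}\le\epsilon_{\nu_{1}}^{1/2}$ and the stability of $\mathcal{M}_{s,\alpha}^{+}$ under products — was carried out in Lemmas \ref{L4.6} and \ref{daishu01}. The two points requiring care are (i) the identity $e^{-\mathrm{i}F_{j}^{T}}=\overline{e^{\mathrm{i}F_{j}}}$ on $\T^{n}$, which makes $\Phi^{\nu}$ genuine conjugation by $B_{\nu}$ and $\overline{B_{\nu}}$ and rests on the Hermiticity of the $F_{j}$ supplied by Lemma \ref{itelm}, and (ii) the legitimacy of restricting the $[\cdot]^{D_{\nu},\sigma_{\nu+1}}$ estimates to $D_{\varepsilon}\times\T^{n}$, which is valid because $D_{\varepsilon}\subset D_{\nu}$ for every $\nu$ and $\T^{n}\subset\T^{n}_{\sigma_{\nu+1}}$.
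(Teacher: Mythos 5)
Your proposal is correct and rests on the same core estimates as the paper: the telescoped bound on the products $e^{\mathrm{i}F_{\nu_1+1}}\cdots e^{\mathrm{i}F_{\nu}}$ behind Lemma \ref{L4.6} together with the embedding of Lemma \ref{daishu01} iv). The only (harmless) difference is bookkeeping: you pass to the matrix representation $\Phi^{\nu}(\xi,\eta)=(\overline{B_\nu}\xi,B_\nu\eta)$ and quote the $[B_{\nu_1}-B_{\nu}]_{s,\alpha+}\leq\epsilon_{\nu_1}^{1/2}$ estimate of Lemma \ref{L4.6} directly, whereas the paper argues at the level of the composed maps via Lemma \ref{zhongjianguji}, Corollary \ref{coro4.1} and the splitting $\Phi^{\nu_1}-\Phi^{\nu_2}=(\Phi^{\nu_1}-id)\circ(\Phi_{\nu_1+1}\circ\cdots\circ\Phi_{\nu_2}-id)+(\Phi_{\nu_1+1}\circ\cdots\circ\Phi_{\nu_2}-id)$.
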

To prove this lemma we need to show that
\begin{Lemma}\label{zhongjianguji}
For   $0\leq s'\leq s$, $0\leq {\nu}_1< \nu_2$ and $(\omega,\theta)\in D_{\varepsilon}\times \T^n$,
$$\|\Phi_{\nu_1+1}\circ \Phi_{\nu_1+2}\circ \cdots \circ \Phi_{\nu_2}-id \|^*_{\mathfrak{L}(Y_{s'}, Y_{s'+2\alpha})}\leq  C\epsilon_{\nu_1}^{\frac12}. $$
\end{Lemma}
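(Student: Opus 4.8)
The plan is to reduce the bound entirely to the matrix estimate (\ref{mapit01}) that was already obtained inside the proof of Lemma \ref{L4.6}, and then to pass from the norm $|\cdot|_{s,\alpha+}$ to the operator norm on $Y_{s'}$ via Lemma \ref{daishu01}(iv). First I would write the composed map explicitly. Since each $\Phi_j=X_{f_j}^1$ acts linearly on $Y_{s'}$ by $(\xi,\eta)\mapsto(e^{-\mathrm{i}F_j^T(\theta)}\xi,\,e^{\mathrm{i}F_j(\theta)}\eta)$ and the matrices $F_j(\theta)$ are Hermitian for $\theta\in\T^n$ (so that $e^{-\mathrm{i}F_j^T}=\overline{e^{\mathrm{i}F_j}}$), one gets for $\theta\in\T^n$
\[
\Phi_{\nu_1+1}\circ\Phi_{\nu_1+2}\circ\cdots\circ\Phi_{\nu_2}(\xi,\eta)=\bigl(\,\overline{C}_{\nu_1,\nu_2}(\theta)\,\xi,\ C_{\nu_1,\nu_2}(\theta)\,\eta\,\bigr),\qquad C_{\nu_1,\nu_2}:=e^{\mathrm{i}F_{\nu_1+1}}\cdots e^{\mathrm{i}F_{\nu_2}}.
\]
Hence $\Phi_{\nu_1+1}\circ\cdots\circ\Phi_{\nu_2}-id$ is the operator $(\xi,\eta)\mapsto\bigl((\overline{C}_{\nu_1,\nu_2}-Id)\xi,\,(C_{\nu_1,\nu_2}-Id)\eta\bigr)$, and since complex conjugation does not change the block operator norms in $|\cdot|_{s,\alpha+}$ nor interacts with $\partial_\omega$, it suffices to control $[\,C_{\nu_1,\nu_2}-Id\,]_{s,\alpha+}$ over $D_\varepsilon\times\T^n$.

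Next I would invoke (\ref{mapit01}) with $\nu=\nu_2$: under Assumptions $\mathrm{B2}$--$\mathrm{B4}$, using (\ref{homoF01}), Lemma \ref{daishu} and $\epsilon_{j+1}=\epsilon_j^{3/2}$ (so $\sum_{j\ge\nu_1}\epsilon_j^{13/24}\le C\epsilon_{\nu_1}^{13/24}$), one has
\[
[\,e^{\mathrm{i}F_{\nu_1+1}}\cdots e^{\mathrm{i}F_{\nu_2}}-Id\,]_{s,\alpha+}^{D_{\nu_2},\sigma_{\nu_2+1}}\ \le\ C\,\epsilon_{\nu_1}^{\frac{13}{24}}.
\]
Because $D_\varepsilon=\bigcap_\nu D_\nu\subset D_{\nu_2}$ and $\T^n\subset\T^n_{\sigma_{\nu_2+1}}$, this bound — for both the $k=0$ and $k=1$ derivatives in $\omega$ that are built into the $[\,\cdot\,]^{+}$ norm — persists after restricting to $(\omega,\theta)\in D_\varepsilon\times\T^n$, and the same holds for $\overline{C}_{\nu_1,\nu_2}-Id$.

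Finally I would convert this to the stated operator estimate. For $0\le s'\le s$ and $|k|=0,1$ the matrix $\partial_\omega^k(C_{\nu_1,\nu_2}-Id)$ lies in $\mathcal{M}_{s,\alpha}^{+}$, so Lemma \ref{daishu01}(iv) gives
\[
\|\partial_\omega^k(C_{\nu_1,\nu_2}-Id)\eta\|_{s'+2\alpha}\le c(\alpha,s)\,\bigl|\partial_\omega^k(C_{\nu_1,\nu_2}-Id)\bigr|_{s,\alpha+}\,\|\eta\|_{s'}\le c(\alpha,s)\,C\,\epsilon_{\nu_1}^{\frac{13}{24}}\,\|\eta\|_{s'},
\]
and likewise for $\overline{C}_{\nu_1,\nu_2}-Id$ acting on $\xi$. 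Adding the two pieces and taking the supremum over $|k|=0,1$ and unit $\zeta\in Y_{s'}$ yields
\[
\|\Phi_{\nu_1+1}\circ\cdots\circ\Phi_{\nu_2}-id\|^{*}_{\mathfrak{L}(Y_{s'},Y_{s'+2\alpha})}\le c(\alpha,s)\,C\,\epsilon_{\nu_1}^{\frac{13}{24}}\le C\,\epsilon_{\nu_1}^{\frac12},
\]
the last step since $\epsilon_{\nu_1}\le\epsilon_0=2c(n,\beta,d,s)\varepsilon$ is small, so $c(\alpha,s)\,C\,\epsilon_{\nu_1}^{1/24}\le1$.

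The step requiring the most care is not an analytic estimate but the bookkeeping: one must check that all the $e^{\mathrm{i}F_j}$ with $\nu_1<j\le\nu_2$ are simultaneously defined and estimated on the common domain $\T^n_{\sigma_{\nu_2+1}}\times D_{\nu_2}$ — which they are, since $F_j\in\mathcal{M}^{+}_{s,\alpha}(D_j,\sigma_{j+1})$ with $D_{\nu_2}\subset D_j$ and $\sigma_{\nu_2+1}<\sigma_{j+1}$ for $j\le\nu_2$ — and that the $\mathcal{C}^1$-in-$\omega$ dependence is propagated correctly, which is automatic because both the $[\,\cdot\,]^{+}$ norm and the starred operator norm build in the $|k|=0,1$ supremum and Lemmas \ref{daishu01} and \ref{daishu} already carry it through products and exponentials. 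No new estimate beyond (\ref{mapit01}) and Lemma \ref{daishu01}(iv) is needed, and this lemma then feeds directly into the Cauchy property of $\{\Phi^\nu-id\}$ in Lemma \ref{L4.12} through the linear identity $\Phi^{\nu_2}-\Phi^{\nu_1}=\Phi^{\nu_1}\circ(\Phi_{\nu_1+1}\circ\cdots\circ\Phi_{\nu_2}-id)$.
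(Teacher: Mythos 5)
Your proof is correct and follows essentially the same route as the paper: the composed map is the pair of matrix products $e^{\pm\mathrm{i}F_{\nu_1+1}}\cdots e^{\pm\mathrm{i}F_{\nu_2}}$, whose deviation from $Id$ is bounded in $[\,\cdot\,]_{s,\alpha+}$ by $C\epsilon_{\nu_1}^{13/24}$ via (\ref{mapit01}), and Lemma \ref{daishu01}(iv) then converts this to the starred operator norm on $\mathfrak{L}(Y_{s'},Y_{s'+2\alpha})$. The paper's proof is exactly this two-line argument, with the same absorption of $\epsilon_{\nu_1}^{1/24}$ into the constant to pass from the exponent $13/24$ to $1/2$.
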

\begin{proof}
Similar to the proof of Lemma \ref{map}, from Lemma \ref{daishu01} iv) and (\ref{mapit01}),  for   all $0\leq s'\leq s$, $(\theta,\omega)\in\T^n\times { D_\varepsilon} $ it holds that
\begin{eqnarray*}
 &&\|\Phi_{\nu_1+1}\circ \Phi_{\nu_1+2}\circ \cdots \circ \Phi_{\nu_2}-id \|^*_{\mathfrak{L}(Y_{s'}, Y_{s'+2\alpha})}\\
 &\leq& c(\alpha,s)\max\left\{[e^{-\mathrm{i}F^T_{\nu_1+1}}\cdots e^{-\mathrm{i}F^T_{\nu_2}}-Id]_{s,\alpha+}^{ D_\varepsilon},\ [e^{\mathrm{i}F_{\nu_1+1}}\cdots e^{\mathrm{i}F_{\nu_2}}-Id]_{s,\alpha+}^{ D_\varepsilon}\right\}\ \leq\  C \epsilon_{\nu_1}^{1/2}.
\end{eqnarray*}
\end{proof}
From Lemma \ref{zhongjianguji}, let $\nu_1=0$ we have
\begin{Corollary}\label{coro4.1}
For   all $0\leq s'\leq s$, $(\omega,\theta)\in D_{\varepsilon}\times \T^n$,
$$\|\Phi^{\nu}-id\|^*_{\mathfrak{L}(Y_{s'}, Y_{s'+2\alpha})}\leq  C\epsilon_0^{\frac12}.$$
\end{Corollary}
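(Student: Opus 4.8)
The plan is to obtain Corollary \ref{coro4.1} as the immediate $\nu_1=0$ specialization of Lemma \ref{zhongjianguji}. Recall that in the KAM scheme of Subsection \ref{KAMstep} (and as recorded in the Iterative Lemma \ref{itelm}) the accumulated transformation at stage $\nu$ is $\Phi^{\nu}=\Phi_1\circ\Phi_2\circ\cdots\circ\Phi_{\nu}$ with the convention $\Phi^0=id$. Hence the product $\Phi_{\nu_1+1}\circ\Phi_{\nu_1+2}\circ\cdots\circ\Phi_{\nu_2}$ that appears in Lemma \ref{zhongjianguji} is, for $\nu_1=0$ and $\nu_2=\nu$, exactly $\Phi^{\nu}$. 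Applying Lemma \ref{zhongjianguji} with these indices then yields
$$\|\Phi^{\nu}-id\|^*_{\mathfrak{L}(Y_{s'},Y_{s'+2\alpha})}\leq C\epsilon_0^{\frac12}$$
for all $0\leq s'\leq s$ and all $(\omega,\theta)\in D_\varepsilon\times\T^n$, which is precisely the claim. The key feature is that the right-hand side is independent of $\nu$, and since $\epsilon_0=2\varepsilon c(n,\beta,d,s)$ it is small once $\varepsilon$ is small, so $\Phi^{\nu}$ stays uniformly close to the identity along the whole iteration.

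Since the statement is a one-line corollary, essentially all the content is carried by Lemma \ref{zhongjianguji}; for self-containedness I would also recall how that lemma is proved. One writes the composition in the explicit linear form $(\xi,\eta)\mapsto(e^{-\mathrm{i}F^T_{\nu_1+1}}\cdots e^{-\mathrm{i}F^T_{\nu_2}}\xi,\ e^{\mathrm{i}F_{\nu_1+1}}\cdots e^{\mathrm{i}F_{\nu_2}}\eta)$ and estimates the two factors separately. The product estimate (\ref{mapit01}) from Lemma \ref{L4.6} gives $[e^{\mathrm{i}F_{\nu_1+1}}\cdots e^{\mathrm{i}F_{\nu_2}}-Id]_{s,\alpha+}^{D_\varepsilon}\leq C\epsilon_{\nu_1}^{13/24}$, the exponent $13/24>1/2$ together with Assumption B4 ($\epsilon_{\nu+1}=\epsilon_\nu^{3/2}$) making the dependence on the intermediate indices harmless; one then converts this $\mathcal{M}_{s,\alpha}^+$ bound into an operator bound on $\mathfrak{L}(Y_{s'},Y_{s'+2\alpha})$ via Lemma \ref{daishu01} iv), i.e. the inclusion $\mathcal{M}_{s,\alpha}^+\hookrightarrow\mathfrak{L}(\ell_{s'}^2,\ell_{s'+2\alpha}^2)$ for $0\leq s'\leq s$, applied to both the $\xi$- and $\eta$-components.

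There is no genuine obstacle in this corollary; the only thing to be careful about is the bookkeeping — ensuring the convention $\Phi^0=id$ is in force so that the empty-prefix composition in Lemma \ref{zhongjianguji} really coincides with $\Phi^{\nu}$, and checking that the constant $C$ is uniform in $\nu$ and in $s'\in[0,s]$ (it is, since the constants in Lemmas \ref{daishu01} and \ref{L4.6} depend only on $\alpha,s,n,d$). This uniform bound is exactly what is needed afterward, together with Lemma \ref{zhongjianguji} for general $\nu_1$, to run the Cauchy argument of Lemma \ref{L4.12} and define the limiting transformation $\Phi_\omega^\infty$.
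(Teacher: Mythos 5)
Your proposal is exactly the paper's argument: Corollary \ref{coro4.1} is obtained by taking $\nu_1=0$, $\nu_2=\nu$ in Lemma \ref{zhongjianguji}, using the convention $\Phi^0=id$ so that $\Phi_1\circ\cdots\circ\Phi_\nu=\Phi^\nu$, and your recollection of how that lemma rests on (\ref{mapit01}) and Lemma \ref{daishu01} iv) matches the paper's proof as well. The argument is correct and the constant is indeed uniform in $\nu$ and $s'$.
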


\noindent\emph{Proof of Lemma \ref{L4.12}}.\ \  Recall that
\begin{eqnarray*}
 \Phi^{\nu_1}-\Phi^{\nu_2}&=&\Phi^{\nu_1}\circ(\Phi_{\nu_1+1}\circ \Phi_{\nu_1+2}\circ \cdots \circ \Phi_{\nu_2}-id)\\
 &=&(\Phi^{\nu_1}-id)\circ(\Phi_{\nu_1+1}\circ \Phi_{\nu_1+2}\circ \cdots \circ \Phi_{\nu_2}-id)+\Phi_{\nu_1+1}\circ \Phi_{\nu_1+2}\circ \cdots \circ \Phi_{\nu_2}-id,
 \end{eqnarray*}
 Then by Lemmas \ref{zhongjianguji} and Corollary \ref{coro4.1} we have
$
\|\Phi^{\nu_1}-\Phi^{\nu_2}\|^*_{\mathfrak{L}(Y_{s'}, Y_{s'+2\alpha})}\leq C\epsilon_{\nu_1}^{\frac12}.
$

We recall that $\Phi^{\nu}-id$ is a map from $\T^n_{\sigma_{\nu+1}}\times D_{\varepsilon}$ to an operator in $\mathfrak{L}(Y_{s'}, Y_{s'+2\alpha})$. From Lemmas \ref{completed} and \ref{L4.12} we
denote
$\Phi_\omega^\infty-id =\lim\limits_{\nu\rightarrow \infty}(\Phi^{\nu}-id)$ which is a map from $\T^n\times D_{\varepsilon}$ to  $\mathfrak{L}(Y_{s'}, Y_{s'+2\alpha})$.
Furthermore,
\begin{Lemma}\label{phiwuqiong}
For $(\omega, \theta)\in D_{\varepsilon}\times \T^n$,  $\Phi_\omega^\infty-id \in \mathcal{C}^{\mu}(\T^n, \mathfrak{L}(Y_{s'}, Y_{s' +2\alpha}))$ for all $0\leq s'\leq s$, where
$\mu\leq \frac{2}{9}\beta$ and is not an integer,
\begin{eqnarray*}
\|\Phi_\omega^\infty-id\|_{\mathcal{C}^{\mu}(\T^n, \mathfrak{L}(Y_{s'}, Y_{s'+2\alpha})) }&\leq&   \frac{c(n,\beta )}{\iota(1-\iota)}  \epsilon_0^{\frac{3}{2\beta}(\frac{2}{9}\beta-\mu)},\ 0<\iota:=\mu-[\mu]<1.
\end{eqnarray*}
\end{Lemma}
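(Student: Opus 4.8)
The plan is to obtain the statement from the converse smoothing result, Lemma \ref{smoothinginverse}, applied in the complex Banach space $X=\mathfrak{L}(Y_{s'},Y_{s'+2\alpha})$ to the sequence $g_\nu:=\Phi^\nu-id$. First I would match the parameters to the geometric regime of that lemma. By the choice (\ref{canshudingyi}) we have $\sigma_{\nu+1}=\epsilon_\nu^{3/(2\beta)}=\epsilon_0^{(3/2)^\nu\cdot 3/(2\beta)}$, so setting $\sigma:=\epsilon_0^{3/(2\beta)}$ gives $\sigma_{\nu+1}=\sigma^{(3/2)^\nu}$ for every $\nu\ge0$, with $\sigma\le\frac14$ once $\varepsilon$ (hence $\epsilon_0$) is small. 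By Lemma \ref{itelm} each $\Phi^\nu$ is a linear symplectic map, real analytic on $\T^n_{\sigma_{\nu+1}}$ and $2\pi$-periodic in $\theta$, whose difference $\Phi^\nu-id$ corresponds (via $B_\nu-Id$ and its conjugate transpose) to an element of $\mathcal{M}_{s,\alpha}^+$; hence by Lemma \ref{daishu01} iv) $g_\nu\in X$ uniformly in $0\le s'\le s$, and $g_0=\Phi^0-id=0$. Thus $g_\nu-g_{\nu-1}=\Phi^\nu-\Phi^{\nu-1}$ is real analytic on $\T^n_{\sigma_{\nu+1}}=\T^n_{\sigma^{(3/2)^\nu}}$, which is precisely the strip required by Lemma \ref{smoothinginverse}.

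The crucial input is the geometric decay $\|g_\nu-g_{\nu-1}\|^*_{\mathfrak{L}(Y_{s'},Y_{s'+2\alpha})}\le c(\ell,n)\,\sigma_{\nu+1}^{\ell}$ with $\ell=\tfrac{2}{9}\beta$. Writing $\Phi^\nu-\Phi^{\nu-1}=(\Phi^{\nu-1}-id)\circ(\Phi_\nu-id)+(\Phi_\nu-id)$ and combining Lemma \ref{L4.6}, Lemma \ref{daishu01} iv) and the sharper power estimate (\ref{mapit01}) (rather than the cruder $\epsilon_{\nu-1}^{1/2}$ of Lemma \ref{zhongjianguji}), I get $\|\Phi^\nu-\Phi^{\nu-1}\|^*_{\mathfrak{L}(Y_{s'},Y_{s'+2\alpha})}\le C\epsilon_{\nu-1}^{13/24}$, uniformly on $\T^n\times D_\varepsilon$ and in $0\le s'\le s$. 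Now $\epsilon_{\nu-1}^{13/24}=\epsilon_0^{(13/36)(3/2)^\nu}$ while $\sigma_{\nu+1}^{2\beta/9}=\epsilon_0^{(12/36)(3/2)^\nu}$, so their quotient is $\epsilon_0^{(1/36)(3/2)^\nu}\le\epsilon_0^{1/36}$ and therefore $C\epsilon_{\nu-1}^{13/24}\le C\epsilon_0^{1/36}\,\sigma_{\nu+1}^{2\beta/9}\le c(\ell,n)\,\sigma_{\nu+1}^{2\beta/9}$ provided $\epsilon_0$ is small enough; this is exactly the hypothesis of Lemma \ref{smoothinginverse} with $\ell=\tfrac{2}{9}\beta$.

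Finally, by Lemma \ref{L4.12} the sequence $g_\nu$ converges in $X$, for each $(\omega,\theta)\in D_\varepsilon\times\T^n$, to $\Phi_\omega^\infty-id$, which is again $2\pi$-periodic in $\theta$; so Lemma \ref{smoothinginverse} applies and gives, for every non-integer $\mu\le\tfrac{2}{9}\beta$ and every $0\le s'\le s$,
$$\Phi_\omega^\infty-id\in\mathcal{C}^{\mu}(\T^n,\mathfrak{L}(Y_{s'},Y_{s'+2\alpha})),\qquad \|\Phi_\omega^\infty-id\|_{\mathcal{C}^{\mu}(\T^n,\mathfrak{L}(Y_{s'},Y_{s'+2\alpha}))}\le\frac{4c(\ell,n)}{\iota(1-\iota)}\,\sigma^{\frac29\beta-\mu},$$
with $\iota=\mu-[\mu]\in(0,1)$; substituting $\sigma=\epsilon_0^{3/(2\beta)}$ turns the right-hand side into $\frac{c(n,\beta)}{\iota(1-\iota)}\epsilon_0^{\frac{3}{2\beta}(\frac29\beta-\mu)}$, as claimed. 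The main obstacle I expect is the decay step: arranging that the exponent is exactly $\tfrac29\beta$ (so that every non-integer $\mu\le\tfrac29\beta$ is admissible, not merely $\mu<\tfrac29\beta$) forces the use of the $13/24$-power bound from (\ref{mapit01}) together with the smallness of $\epsilon_0$ to absorb the universal constant $C$ into $c(\ell,n)$; the remaining bookkeeping---periodicity, uniformity in $\omega\in D_\varepsilon$ and in $s'$---is routine.
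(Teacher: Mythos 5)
Your proposal is correct and follows essentially the same route as the paper's own proof: bound $\Phi^{\nu}-\Phi^{\nu-1}$ in $\mathfrak{L}(Y_{s'},Y_{s'+2\alpha})$ by a geometric quantity of the form $C\sigma_{\nu+1}^{2\beta/9}$ and invoke Lemma \ref{smoothinginverse} with $\ell=\frac{2}{9}\beta$ and $\sigma=\sigma_1=\epsilon_0^{3/(2\beta)}$. The only (harmless) difference is that the paper uses the cruder bound $C\epsilon_{\nu-1}^{1/2}$, which under (\ref{canshudingyi}) already equals $C\sigma_{\nu+1}^{2\beta/9}$ exactly, so the sharper $13/24$-power estimate from (\ref{mapit01}) is not actually ``forced'' in order to reach the exponent $\frac{2}{9}\beta$ --- it merely gives you the extra factor $\epsilon_0^{1/36}$ with which to absorb constants.
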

\begin{proof}
$$\|\Phi^{\nu}-\Phi^{\nu-1}\|_{\mathfrak{L}(Y_{s'},Y_{s'+2\alpha})}\leq C\epsilon_{\nu-1}^{\frac{1}{2}}\leq C\sigma_{\nu+1}^{\frac{2\beta}{9}}.$$
From Lemma \ref{smoothinginverse}, we deduce that $\Phi_\omega^\infty-id\in\mathcal{C}^{\mu}(\T^n,\mathfrak{L}(Y_{s'}, Y_{s'+2\alpha}))$ for every $\mu\leq \frac{2}{9}\beta$ which is not an integer and, \begin{eqnarray*}
\|\Phi_\omega^\infty-id\|_{\mathcal{C}^{\mu}(\T^n,\mathfrak{L}(Y_{s'}, Y_{s'+2\alpha}))}\leq   \frac{c(n,\beta )}{\iota(1-\iota)} \sigma_1^{\frac{2}{9}\beta-\mu}=   \frac{c(n,\beta )}{\iota(1-\iota)}  \epsilon_0^{\frac{3}{2\beta}(\frac{2}{9}\beta-\mu)},\ 0<\iota:=\mu-[\mu]<1.
\end{eqnarray*}
\end{proof}
\begin{Remark}
In fact we can prove that $\Phi_{\omega}^{\infty}$ is also $C^1$ smooth in $\omega$ in Whitney sense and satisfies a similar estimation as above.
\end{Remark}

 \begin{Lemma}\label{convergence}
  For any $y\in\R^n$,\ $(\xi,\eta)\in Y_{s'}$ with $1\leq s'\leq \max\{s,1\}$,
$$\mathcal{H}\circ\Phi_\omega^\infty (\xi,\eta)=h_\infty(\xi,\eta):=\la\omega,y\ra+\la\xi,N_\infty\eta\ra,$$
uniformly for $\omega\in D_\varepsilon$ and $\theta\in\T^n$, where
 $N_\infty(\omega)=\lim_{\nu\rightarrow\infty}N_{\nu}(\omega)$ with  $N_{\infty}(\omega)\in\mathcal{NF}$,
   uniformly on $ D_\varepsilon$,  $\mathcal{C}^1$ Whitney smooth and
$
[N_\infty-N_0]^{ D_\varepsilon}_{s,\alpha}\leq  2\epsilon_0.$
\end{Lemma}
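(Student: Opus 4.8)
The plan is to pass to the limit $\nu\to\infty$ in the conjugacy identity $H^{(\nu)}\circ\Phi^{\nu}=h_{\nu}+p_{\nu}$ supplied by the Iterative Lemma \ref{itelm}. Both sides are functions on the extended phase space; since $h_{\nu}+p_{\nu}$ depends on $y$ only through the inert term $\langle\omega,y\rangle$, the real content is the convergence of the $(\xi,\eta)$-parts. On the right-hand side I will use the convergence of $N_{\nu}$ (constructed below) and the smallness $[P_{\nu}]_{s,\alpha}^{D_{\nu},\sigma_{\nu+1}}\le\tfrac12\epsilon_{\nu}\to0$; on the left-hand side the convergences $\Phi^{\nu}\to\Phi_{\omega}^{\infty}$ in $\mathfrak{L}(Y_{s'},Y_{s'+2\alpha})$ (Lemma \ref{L4.12} and Corollary \ref{coro4.1}) and $P^{(\nu)}(\theta)\to P(\theta)$ in $\mathcal{M}_{s,\alpha}$ for $\theta\in\T^{n}$ (Lemma \ref{L4.7}).

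First I would construct $N_{\infty}$. Writing $N_{\nu}-N_{0}=\sum_{j=1}^{\nu}(N_{j}-N_{j-1})$ and using $[N_{j}-N_{j-1}]_{s,\alpha}^{D_{j-1},\sigma_{j}}\le\epsilon_{j-1}$ (estimate (\ref{homoN01})) together with $D_{\varepsilon}\subset D_{j-1}$ and Assumption B4 ($\epsilon_{j}=\epsilon_{j-1}^{3/2}$), the matrices $N_{\nu}-N_{0}$ form a Cauchy sequence in the Banach space $\mathcal{M}_{s,\alpha}(D_{\varepsilon})$ (completeness via Lemma \ref{completed}); set $N_{\infty}:=N_{0}+\lim_{\nu}(N_{\nu}-N_{0})$. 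Summing the super-geometrically decaying series gives $[N_{\infty}-N_{0}]_{s,\alpha}^{D_{\varepsilon}}\le\sum_{j\ge0}\epsilon_{j}\le2\epsilon_{0}$ once $\epsilon_{0}$ is small. Each $N_{\nu}$ is Hermitian and block diagonal, and both properties persist under the $[\cdot]_{s,\alpha}^{D_{\varepsilon}}$-limit, so $N_{\infty}\in\mathcal{NF}$; and since this norm controls $\partial_{\omega}^{k}$ for $|k|\le1$, the limit $N_{\infty}(\omega)$ is $\mathcal{C}^{1}$ in $\omega$ on $D_{\varepsilon}$ in the Whitney sense.

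Next I fix $y\in\R^{n}$, $\theta\in\T^{n}$, $\omega\in D_{\varepsilon}$ and $(\xi,\eta)\in Y_{s'}$ with $1\le s'\le\max\{s,1\}$, and evaluate $H^{(\nu)}\circ\Phi^{\nu}=h_{\nu}+p_{\nu}$ at this point. The right-hand side is $\langle\omega,y\rangle+\langle\xi,N_{\nu}\eta\rangle+\langle\xi,P_{\nu}(\theta)\eta\rangle$; since $N_{\nu}-N_{\infty}$ and $P_{\nu}(\theta)$ lie in $\mathcal{M}_{s,\alpha}$, Lemma \ref{daishu01} iii) bounds the associated quadratic forms on $Y_{1}$ (hence on $Y_{s'}$) by $c\,[N_{\nu}-N_{\infty}]_{s,\alpha}^{D_{\varepsilon}}\|\xi\|_{1}\|\eta\|_{1}$ and $c\,\epsilon_{\nu}\|\xi\|_{1}\|\eta\|_{1}$ respectively, so the right-hand side converges to $\langle\omega,y\rangle+\langle\xi,N_{\infty}\eta\rangle=h_{\infty}(\xi,\eta)$. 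For the left-hand side I split
$$H^{(\nu)}\circ\Phi^{\nu}-\mathcal{H}\circ\Phi_{\omega}^{\infty}=\bigl(H^{(\nu)}\circ\Phi^{\nu}-H^{(\nu)}\circ\Phi_{\omega}^{\infty}\bigr)+\bigl(H^{(\nu)}-\mathcal{H}\bigr)\circ\Phi_{\omega}^{\infty}.$$
For the first term I use that the $(\xi,\eta)$-quadratic part of $H^{(\nu)}$, namely $\langle\xi,N_{0}\eta\rangle+\varepsilon\langle\xi,P^{(\nu)}(\theta)\eta\rangle$, is bounded on $Y_{1}$ uniformly in $\nu$ and $\theta\in\T^{n}$ — the $N_{0}$-part directly by Hypothesis A1, the $P^{(\nu)}$-part by Lemma \ref{daishu01} iii) together with the uniform bound on $[P^{(\nu)}]_{s,\alpha}$ from Lemma \ref{PP} — so this term tends to $0$ by the uniform operator bound on $\Phi^{\nu},\Phi_{\omega}^{\infty}$ on $Y_{s'}$ and $\|\Phi^{\nu}-\Phi_{\omega}^{\infty}\|_{\mathfrak{L}(Y_{s'},Y_{s'+2\alpha})}\to0$ from Lemma \ref{L4.12} and Corollary \ref{coro4.1} (using $Y_{s'+2\alpha}\hookrightarrow Y_{1}$). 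The second term equals $\varepsilon\langle\xi',(P^{(\nu)}(\theta)-P(\theta))\eta'\rangle$ with $(\xi',\eta')=\Phi_{\omega}^{\infty}(\theta)(\xi,\eta)\in Y_{1}$, and by Lemma \ref{daishu01} iii) and Lemma \ref{L4.7} is bounded by $c\,\varepsilon\,\sigma_{\nu}^{\beta}\|\xi'\|_{1}\|\eta'\|_{1}\to0$. Equating the two limits gives $\mathcal{H}\circ\Phi_{\omega}^{\infty}=h_{\infty}$; all the bounds above are uniform in $\theta\in\T^{n}$ and $\omega\in D_{\varepsilon}$, which is the asserted uniform convergence.

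The main obstacle is the left-hand limit: $N_{0}$ is unbounded on $Y_{0}$, so the comparison cannot be run there — one is forced into $Y_{s'}$ with $s'\ge1$, where Hypothesis A1 turns $\langle\xi,N_{0}\eta\rangle$ into a bounded quadratic form, and one must control \emph{simultaneously} the drift of the conjugating map $\Phi^{\nu}\to\Phi_{\omega}^{\infty}$ (available only in $\mathfrak{L}(Y_{s'},Y_{s'+2\alpha})$) and the drift of the analytic approximation $P^{(\nu)}\to P$ (available only on the shrinking strips $\T^{n}_{\sigma_{\nu}}$, but harmless on the real torus). Granting the uniform quadratic-form bounds, the remaining matters — the definition and the $\mathcal{NF}$ and $\mathcal{C}^{1}$ properties of $N_{\infty}$, and the bookkeeping of uniformity in $(\theta,\omega)$ — are routine consequences of the Iterative Lemma and Lemma \ref{daishu01}.
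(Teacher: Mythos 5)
Your proposal is correct and follows the paper's overall strategy (pass to the limit in $H^{(\nu)}\circ\Phi^{\nu}=h_{\nu}+p_{\nu}$; construct $N_{\infty}$ as the $[\cdot]_{s,\alpha}^{D_{\varepsilon}}$-limit exactly as in Lemma \ref{convergence01}), but you organize the left-hand limit differently. The paper pulls the conjugation inside and compares matrices: it writes $H^{(\nu)}\circ\Phi^{\nu}=\la\omega,y\ra+\la\xi,U_{\nu}\eta\ra+\la\xi,V_{\nu}\eta\ra$ with $U_{\nu}=B_{\nu}^{-1}N_{0}B_{\nu}$, $V_{\nu}=B_{\nu}^{-1}Q_{\nu}B_{\nu}$, and proves $\la\xi,(V_{\nu}-V_{\infty})\eta\ra\to0$ by matrix-norm estimates (Lemma \ref{convergence03}) and $\la\xi,(U_{\nu}-U_{\infty})\eta\ra\to0$ by the pairing $|\la\phi,\psi\ra|\le\|\phi\|_{s'}\|\psi\|_{s'-2}$ together with $\|N_{0}\eta'\|_{s'-2}\le c_{1}\|\eta'\|_{s'}$, which is where the restriction $s'\ge1$ enters (Lemma \ref{convergence04}). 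You instead telescope the compositions, $(H^{(\nu)}\circ\Phi^{\nu}-H^{(\nu)}\circ\Phi_{\omega}^{\infty})+(H^{(\nu)}-\mathcal{H})\circ\Phi_{\omega}^{\infty}$, and absorb the unboundedness of $N_{0}$ by viewing $\la\xi,N_{0}\eta\ra+\varepsilon\la\xi,P^{(\nu)}\eta\ra$ as a quadratic form bounded on $Y_{1}$ uniformly in $\nu$; both routes rest on the same two convergences ($B_{\nu}\to M_{\omega}$ from Lemmas \ref{App02}--\ref{App03}, $P^{(\nu)}\to P$ on the real torus from Lemma \ref{L4.7}) and on working at regularity at least $1$, so the difference is essentially bookkeeping rather than substance. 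One small caveat: when $s<1$ (allowed in Theorem \ref{MainTheorem}, e.g.\ $d=1$), the relevant exponent is $s'=\max\{s,1\}=1>s$, and the convergence $\|\Phi^{\nu}-\Phi_{\omega}^{\infty}\|_{\mathfrak{L}(Y_{s'},Y_{s'+2\alpha})}\to0$ you quote from Lemma \ref{L4.12} is only stated for $0\le s'\le s$; in that edge case you should instead invoke the ``furthermore'' clause of Lemma \ref{daishu01} iv) (boundedness $\ell^{2}_{1}\to\ell^{2}_{1}$ for matrices in $\mathcal{M}^{+}_{s,\alpha}$) applied to $B_{\nu}-M_{\omega}$, which is precisely how the paper's Lemma \ref{convergence04} and the Remark after Lemma \ref{App03} handle it; with that substitution your argument goes through unchanged.
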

 We need to prove a series of preparation lemmas.
 \begin{Lemma}\label{convergence01}
  For any $y\in\R^n$,\ $(\xi,\eta)\in Y_{\bar{s}}$ and $\bar{s}\geq 1$,
$$\lim_{\nu\rightarrow\infty}(h_\nu+p_\nu)=h_\infty=\la\omega,y\ra+\la\xi,N_\infty\eta\ra,$$
   uniformly on $ D_\varepsilon\times \T^n$  and
$
[N_\infty-N_0]^{ D_\varepsilon}_{s,\alpha}\leq  2\epsilon_0.$
\end{Lemma}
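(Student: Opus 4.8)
The plan is to treat the normal-form parts $N_\nu$ and the perturbations $p_\nu$ separately: the former converge to $N_\infty$ in $\mathcal{M}_{s,\alpha}$, the latter vanish in the limit once evaluated on $Y_{\bar s}$ with $\bar s\geq1$. First I would fix the common Cantor set $D_\varepsilon=\bigcap_{\nu\geq0}D_\nu$ and restrict everything to it. By the iterative Lemma \ref{itelm} one has $[N_j-N_{j-1}]_{s,\alpha}^{D_{j-1},\sigma_j}\leq\epsilon_{j-1}$ for every $j\geq1$; since the $N_j$ are $\theta$-independent and $D_\varepsilon\subset D_{j-1}$, this is just $[N_j-N_{j-1}]_{s,\alpha}^{D_\varepsilon}\leq\epsilon_{j-1}$. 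Because $\epsilon_{\nu+1}=\epsilon_\nu^{3/2}$, the series $\sum_{j\geq1}\epsilon_{j-1}$ converges and is $\leq\epsilon_0(1-\epsilon_0^{1/2})^{-1}\leq2\epsilon_0$ for $\varepsilon$ small, so $\{N_\nu\}$ is Cauchy in the Banach space $(\mathcal{M}_{s,\alpha}(D_\varepsilon),[\cdot]_{s,\alpha}^{D_\varepsilon})$ (Lemma \ref{completed}); I denote its limit by $N_\infty(\omega)$. Convergence in $[\cdot]_{s,\alpha}^{D_\varepsilon}$ is $\mathcal{C}^1$ (Whitney) convergence of $\omega\mapsto N_\nu(\omega)$, so $N_\infty$ is $\mathcal{C}^1$ Whitney on $D_\varepsilon$; summing the telescope gives $[N_\infty-N_0]_{s,\alpha}^{D_\varepsilon}\leq2\epsilon_0$; and since each $N_\nu$ is Hermitian and block diagonal and both conditions are closed under the limit, $N_\infty\in\mathcal{NF}$.

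Next I would show $p_\nu\to0$ on $Y_{\bar s}$. Writing $p_\nu=\la\xi,P_\nu(\theta)\eta\ra$ with $[P_\nu]_{s,\alpha}^{D_\nu,\sigma_{\nu+1}}\leq\frac12\epsilon_\nu$, for $\bar s\geq1$ and $(\xi,\eta)\in Y_{\bar s}$ I use the duality $|\la\xi,\zeta\ra|\leq\|\xi\|_{\bar s}\|\zeta\|_{-\bar s}$ together with Lemma \ref{daishu01} iii) (valid precisely because $\bar s\geq1$) to obtain
\[
|p_\nu(\xi,\eta;\omega,\theta)|\leq\|\xi\|_{\bar s}\,\|P_\nu(\theta)\eta\|_{-\bar s}\leq c(\alpha,s)\,|P_\nu(\theta)|_{s,\alpha}\,\|\xi\|_{\bar s}\|\eta\|_{\bar s}\leq\frac{c(\alpha,s)}{2}\,\epsilon_\nu\,\|\xi\|_{\bar s}\|\eta\|_{\bar s},
\]
uniformly for $\omega\in D_\varepsilon$, $\theta\in\T^n$; hence $p_\nu\to0$. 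The same estimate applied to $N_\nu-N_\infty\in\mathcal{M}_{s,\alpha}(D_\varepsilon)$ gives $|\la\xi,(N_\nu-N_\infty)\eta\ra|\leq c(\alpha,s)[N_\nu-N_\infty]_{s,\alpha}^{D_\varepsilon}\|\xi\|_{\bar s}\|\eta\|_{\bar s}\to0$, so $\la\xi,N_\nu\eta\ra\to\la\xi,N_\infty\eta\ra$ uniformly on $D_\varepsilon\times\T^n$.

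Combining the two, $h_\nu+p_\nu=\la\omega,y\ra+\la\xi,N_\nu\eta\ra+\la\xi,P_\nu(\theta)\eta\ra$ converges, uniformly on $D_\varepsilon\times\T^n$, to $\la\omega,y\ra+\la\xi,N_\infty\eta\ra=h_\infty$, which is the claim. I do not expect a genuine analytic obstacle here: the hard estimates are already packed into Lemma \ref{itelm} and Proposition \ref{pro4.1}, and what remains is bookkeeping — checking that all bounds survive on the common set $D_\varepsilon$, that $N_\infty$ inherits $\mathcal{C}^1$ Whitney regularity from the uniform $\mathcal{C}^1$ convergence, and (the one point deserving attention) that the quadratic forms must be paired via the $\ell^2_{\bar s}$–$\ell^2_{-\bar s}$ duality and Lemma \ref{daishu01} iii). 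That last point is exactly why the statement is restricted to $\bar s\geq1$, and it is what ultimately forces the range $1\leq s'\leq\max\{s,1\}$ in Theorem \ref{MainTheorem}.
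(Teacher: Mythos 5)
Your argument coincides with the paper's proof: both establish that $\{N_\nu\}$ is Cauchy in $(\mathcal{M}_{s,\alpha}(D_\varepsilon),[\cdot]_{s,\alpha}^{D_\varepsilon})$ via the telescoping bound $[N_j-N_{j-1}]_{s,\alpha}\leq\epsilon_{j-1}$ and Lemma \ref{completed}, obtain $[N_\infty-N_\nu]_{s,\alpha}^{D_\varepsilon}\leq 2\epsilon_\nu$, and then use the $\ell^2_{\bar s}$--$\ell^2_{-\bar s}$ duality together with Lemma \ref{daishu01} iii) (which is exactly where $\bar s\geq1$ enters) to conclude $\la\xi,N_\nu\eta\ra\to\la\xi,N_\infty\eta\ra$ and $|p_\nu|\leq \frac12 c(\alpha,s)\epsilon_\nu\|\xi\|_{\bar s}\|\eta\|_{\bar s}\to0$. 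The proposal is correct and follows essentially the same route as the paper.
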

\begin{proof}
For $\nu_1<\nu_2$, from Lemma \ref{itelm},
$
[N_{\nu_1}-N_{\nu_2}]_{s,\alpha}^{D_\varepsilon}\leq 2\epsilon_{\nu_1}.
$
It follows that $N_\nu-N_0$ is a Cauchy series in the norm $[\cdot]_{s,\alpha}^{D_{\varepsilon, \T^n}}$ by (\ref{homon02}).  We denote $N_\infty=\lim N_\nu$ by Lemma \ref{completed}.
Clearly, $\nu\geq0$, $$[N_\infty-N_{\nu}]^{ D_\varepsilon}_{s,\alpha}\leq \sum_{k=\nu}^\infty[\widetilde{N}_{k}]^{ D_\varepsilon}_\alpha\leq 2\epsilon_\nu.$$
Thus $$|\la\xi,(N_\infty-N_\nu)\eta\ra|\leq\|\xi\|_{\bar{s}}\|(N_\infty-N_\nu)\eta\|_{-\bar{s}}\leq c(\alpha,s)[N_\infty-N_\nu]_{s,\alpha}^{D_\varepsilon}\|\xi\|_{\bar{s}}\|\eta\|_{\bar{s}}\leq 2c(\alpha,s)\epsilon_{\nu}\|\xi\|_{\bar{s}}\|\eta\|_{\bar{s}},$$
which means that
$\lim_\nu\la\xi,N_\nu\eta\ra=\la\xi,N_\infty\eta\ra.$
On the other hand,
\begin{equation*}\label{pes}|p_\nu|\leq c(\alpha,s)[P_\nu]_{s,\alpha}^{D_\nu,\sigma_{\nu+1}}\|\xi\|_{\bar{s}}\|\eta\|_{\bar{s}}\leq \frac{1}{2} c(\alpha,s)\epsilon_\nu \|\xi\|_{\bar{s}}\|\eta\|_{\bar{s}}\rightarrow 0,\ \nu\rightarrow\infty.
 \end{equation*}
 \end{proof}

 \begin{Lemma}\label{App02}
If $\epsilon_0 \ll 1,$ then for $0\leq s'\leq s$ and  $(\theta,\omega)\in \T^n \times  D_{\varepsilon}$ there exists $M_\omega(\theta)$  defined on $\mathfrak{L}(\ell^2_{s' },\ell^2_{s' })(0\leq s'\leq s)$ such that\\
i) $\|B_\nu-M_\omega\|_{\mathfrak{L}(\ell^2_{s'},\ell^2_{s'+2\alpha})},\ \|B_\nu^{-1}-M_\omega^{-1} \|_{\mathfrak{L}(\ell^2_{s'},\ell^2_{s'+2\alpha})}\leq c(\alpha,s,n)\epsilon_\nu^{\frac12};$ \\
ii) $\|M_\omega-Id\|_{\mathfrak{L}(\ell^2_{s'},\ell^2_{s'+2\alpha})},\ \|M_\omega^{-1} -Id\|_{\mathfrak{L}(\ell^2_{s'},\ell^2_{s'+2\alpha})}\leq c(\alpha,s,n)\epsilon_0^{\frac12}$.
\end{Lemma}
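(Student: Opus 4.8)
The plan is to obtain $M_\omega(\theta)$ as the limit, in the space $\mathcal{M}_{s,\alpha}^+$, of the matrices $B_\nu=e^{\mathrm{i}F_1}\cdots e^{\mathrm{i}F_\nu}$, and then translate the resulting matrix estimates into operator-norm estimates through Lemma \ref{daishu01} iv). First I would fix a point $(\theta,\omega)\in\T^n\times D_\varepsilon$. Since $D_\varepsilon\subset D_\nu$ for every $\nu$ and $\theta\in\T^n\subset\T^n_{\sigma_{\nu+1}}$, each $B_\nu$ and each $B_\nu^{-1}$ is defined at $(\theta,\omega)$, and the pointwise norm is controlled by the KAM-scheme norm, namely $|B_{\nu_1}(\theta,\omega)-B_{\nu_2}(\theta,\omega)|_{s,\alpha+}\le[B_{\nu_1}-B_{\nu_2}]_{s,\alpha+}^{D_{\nu_2},\sigma_{\nu_2+1}}$. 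Hence Lemma \ref{L4.6} gives, for $0\le\nu_1<\nu_2$,
\[
|B_{\nu_1}(\theta,\omega)-B_{\nu_2}(\theta,\omega)|_{s,\alpha+}\le\epsilon_{\nu_1}^{1/2},\qquad |B_{\nu_1}^{-1}(\theta,\omega)-B_{\nu_2}^{-1}(\theta,\omega)|_{s,\alpha+}\le\epsilon_{\nu_1}^{1/2}.
\]
Since $\epsilon_{\nu_1}\to0$, both $\{B_\nu-Id\}_\nu$ and $\{B_\nu^{-1}-Id\}_\nu$ are Cauchy in the Banach space $(\mathcal{M}_{s,\alpha}^+,|\cdot|_{s,\alpha+})$ (Lemma \ref{completed}); denote their limits by $M_\omega(\theta)-Id$ and $N_\omega(\theta)-Id$.

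Next I would check that $N_\omega(\theta)=M_\omega(\theta)^{-1}$. Write $B_\nu=Id+\widetilde B_\nu$ and $B_\nu^{-1}=Id+\widehat B_\nu$ with $\widetilde B_\nu,\widehat B_\nu\in\mathcal{M}_{s,\alpha}^+$; the relations $B_\nu B_\nu^{-1}=B_\nu^{-1}B_\nu=Id$ then read $\widetilde B_\nu+\widehat B_\nu+\widetilde B_\nu\widehat B_\nu=0$ and $\widehat B_\nu+\widetilde B_\nu+\widehat B_\nu\widetilde B_\nu=0$. By Lemma \ref{daishu01} ii) the products stay in $\mathcal{M}_{s,\alpha}^+$ and multiplication there is continuous, so passing to the limit yields $(M_\omega-Id)+(N_\omega-Id)+(M_\omega-Id)(N_\omega-Id)=0$ together with its symmetric counterpart, i.e. $M_\omega N_\omega=N_\omega M_\omega=Id$. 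Thus $M_\omega(\theta)$ is invertible with inverse $N_\omega(\theta)$. Moreover $w_a\ge d\ge1$, so $\ell^2_{s'+2\alpha}\hookrightarrow\ell^2_{s'}$, and by Lemma \ref{daishu01} iv) we get $\mathcal{M}_{s,\alpha}^+\subset\mathfrak{L}(\ell^2_{s'},\ell^2_{s'+2\alpha})\subset\mathfrak{L}(\ell^2_{s'},\ell^2_{s'})$ for $0\le s'\le s$; hence $M_\omega(\theta),M_\omega(\theta)^{-1}\in\mathfrak{L}(\ell^2_{s'},\ell^2_{s'})$ for all $0\le s'\le s$.

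For the quantitative statements, letting $\nu_2\to\infty$ in the display above with $\nu_1=\nu$ gives $|B_\nu-M_\omega|_{s,\alpha+}\le\epsilon_\nu^{1/2}$ and $|B_\nu^{-1}-M_\omega^{-1}|_{s,\alpha+}\le\epsilon_\nu^{1/2}$, while letting $\nu\to\infty$ in the bound $[B_\nu-Id]_{s,\alpha+}\le\epsilon_0^{1/2}$ of Lemma \ref{L4.6} gives $|M_\omega-Id|_{s,\alpha+}\le\epsilon_0^{1/2}$ and likewise for $M_\omega^{-1}-Id$. Each of $B_\nu-M_\omega$, $B_\nu^{-1}-M_\omega^{-1}$, $M_\omega-Id$, $M_\omega^{-1}-Id$ lies in $\mathcal{M}_{s,\alpha}^+$, so Lemma \ref{daishu01} iv) converts these $|\cdot|_{s,\alpha+}$ bounds into the asserted $\mathfrak{L}(\ell^2_{s'},\ell^2_{s'+2\alpha})$ bounds for all $0\le s'\le s$, with the extra constant $c(\alpha,s)$ absorbed into $c(\alpha,s,n)$; this proves i) and ii). The only point requiring care is that the $B_\nu$ are a priori defined on the shrinking complex strips $\T^n_{\sigma_{\nu+1}}$; this is harmless because the statement is a pointwise one on the fixed real set $\T^n\times D_\varepsilon$, on which every $B_\nu$ survives and the KAM-scheme norms dominate the pointwise matrix norm actually used. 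I expect this bookkeeping, together with the limit passage in the inverse identity, to be the only mildly delicate part; everything else is a direct application of Lemmas \ref{completed}, \ref{daishu01}, and \ref{L4.6}.
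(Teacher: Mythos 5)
Your proposal is correct and follows essentially the same route as the paper: Cauchy estimates from Lemma \ref{L4.6}, completeness to define $M_\omega-Id$, and Lemma \ref{daishu01} iv) to convert the $|\cdot|_{s,\alpha+}$ bounds into the $\mathfrak{L}(\ell^2_{s'},\ell^2_{s'+2\alpha})$ bounds (the paper just passes to the limit directly in the operator norm rather than first in $\mathcal{M}_{s,\alpha}^+$). Your explicit verification that the limit of $B_\nu^{-1}$ is indeed $M_\omega^{-1}$, via the identity $\widetilde B_\nu+\widehat B_\nu+\widetilde B_\nu\widehat B_\nu=0$ and continuity of multiplication, is a detail the paper leaves implicit ("the estimates on $B_\nu^{-1}$ and $M_\omega^{-1}$ are similar"), and it is a welcome addition.
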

\begin{proof}
From Lemmas \ref{daishu01} and \ref{L4.6}, for $0\leq \nu_1<\nu_2$,
 $\|B_{\nu_1}-B_{\nu_2}\|_{\mathfrak{L}(\ell^2_{s'},\ell^2_{s'+2\alpha})}\leq c(\alpha,s,n) \epsilon_{\nu_1}^{\frac12}$.
 It means that $\{B_\nu-Id\}$ is a Cauchy sequence in $\mathfrak{L}(\ell^2_{s'},\ell^2_{s'+2\alpha})$ and its limit is denoted by $M_\omega-Id$, which satisfies
$\|B_\nu-M_\omega\|_{\mathfrak{L}(\ell^2_{s'},\ell^2_{s'+2\alpha})}\leq c(\alpha,s,n) \epsilon_\nu^{\frac12}$.
Recall that from Lemma \ref{L4.6},
$
[B_\nu-Id]_{s,\alpha+}^{\T^n,  D_{\varepsilon}}\leq \epsilon_0^{\frac12}.
$
It follows that
 $\|B_\nu-Id\|_{\mathfrak{L}(\ell^2_{s'},\ell^2_{s'+2\alpha})}\leq c(\alpha,s,n)\epsilon_0^{\frac12}$.
Set $\nu\rightarrow\infty,$ we obtain
$$\|M_\omega-Id\|_{\mathfrak{L}(\ell^2_{s'},\ell^2_{s'+2\alpha})}\leq c(\alpha,s,n)\epsilon_0^{\frac12}.$$
The estimates on $B_\nu^{-1}$ and $M_\omega^{-1}$ are similar, we omit it for simplicity.
\end{proof}
It is easy to show   ${M}_\omega^{-1}(\theta)={\overline{M}}^T_\omega(\theta)$ from (\ref{inverseM}) when $\theta\in\T^n$.
Moreover, we can improve Lemma \ref{App02} to the following
\begin{Lemma}\label{App03}
If $\epsilon_0 \ll 1,$ then\\
i) $[B_\nu-M_\omega]_{s,\alpha+}^{\T^n, D_{\varepsilon}},\ [B_\nu^{-1}-M_\omega^{-1} ]_{s,\alpha+}^{\T^n, D_{\varepsilon}}\leq c(\alpha,s,n)\epsilon_\nu^{\frac12};$ \\
ii) $[M_\omega-Id]_{s,\alpha+}^{\T^n, D_{\varepsilon}},\ [M_\omega^{-1} -Id]_{s,\alpha+}^{\T^n, D_{\varepsilon}}\leq c(\alpha,s,n)\epsilon_0^{\frac12}$.
\end{Lemma}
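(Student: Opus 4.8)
The plan is to obtain Lemma \ref{App03} by repeating, almost verbatim, the Cauchy-sequence argument used for Lemma \ref{App02}, but carried out in the finer norm $[\cdot]_{s,\alpha+}^{\T^n,D_\varepsilon}$ instead of the operator norm $\|\cdot\|_{\mathfrak{L}(\ell^2_{s'},\ell^2_{s'+2\alpha})}$. First I would restrict all the matrices $B_\nu$ and $B_\nu^{-1}$ to the limiting domain $\T^n\times D_\varepsilon$. Since $D_\varepsilon=\bigcap_\nu D_\nu\subset D_{\nu}$ and $\T^n\subset\T^n_{\sigma_{\nu+1}}$, and since $[\cdot]_{s,\alpha+}$ is monotone under shrinking of the domain, Lemma \ref{L4.6} (valid once $\epsilon_0\ll1$, so that $4\epsilon_0^{1/24}\le1$) immediately gives, for $0\le\nu_1<\nu_2$,
$$[B_{\nu_1}-B_{\nu_2}]_{s,\alpha+}^{\T^n,D_\varepsilon}\le[B_{\nu_1}-B_{\nu_2}]_{s,\alpha+}^{D_{\nu_2},\sigma_{\nu_2+1}}\le\epsilon_{\nu_1}^{\frac12},$$
together with the same bound for $B_{\nu_1}^{-1}-B_{\nu_2}^{-1}$. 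Because $\epsilon_{\nu_1}=\epsilon_0^{(3/2)^{\nu_1}}\to0$, the sequences $\{B_\nu-Id\}$ and $\{B_\nu^{-1}-Id\}$ are Cauchy in the space of Whitney-$\mathcal{C}^1$ (in $\omega\in D_\varepsilon$) maps $\T^n\ni\theta\mapsto\mathcal{M}^+_{s,\alpha}$ with the norm $[\cdot]_{s,\alpha+}^{\T^n,D_\varepsilon}$; this space is complete because $(\mathcal{M}^+_{s,\alpha},|\cdot|_{s,\alpha+})$ is Banach (Lemma \ref{completed}) and $[\cdot]_{s,\alpha+}^{\T^n,D_\varepsilon}$ is just the supremum of the $|\partial_\omega^k(\cdot)|_{s,\alpha+}$ over $(\theta,\omega)$ and $|k|\le1$.

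Next I would identify these limits with the operators $M_\omega$, $M_\omega^{-1}$ of Lemma \ref{App02}. By Lemma \ref{daishu01} iv), $\|A\xi\|_{s'+2\alpha}\le c(\alpha,s)|A|_{s,\alpha+}\|\xi\|_{s'}$ for $0\le s'\le s$, so convergence in $[\cdot]_{s,\alpha+}^{\T^n,D_\varepsilon}$ forces convergence in $\|\cdot\|_{\mathfrak{L}(\ell^2_{s'},\ell^2_{s'+2\alpha})}$; by uniqueness of limits the $\mathcal{M}^+_{s,\alpha}$-limit of $B_\nu-Id$ equals the $M_\omega-Id$ of Lemma \ref{App02} (and similarly for the inverses). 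Letting $\nu_2\to\infty$ in the Cauchy bound then yields
$$[B_{\nu}-M_\omega]_{s,\alpha+}^{\T^n,D_\varepsilon},\ [B_{\nu}^{-1}-M_\omega^{-1}]_{s,\alpha+}^{\T^n,D_\varepsilon}\le\epsilon_{\nu}^{\frac12}\le c(\alpha,s,n)\epsilon_\nu^{\frac12},$$
which is part i). For part ii), the uniform bound $[B_\nu-Id]_{s,\alpha+}^{D_\nu,\sigma_{\nu+1}}\le\epsilon_0^{\frac12}$ from Lemma \ref{L4.6}, and its analogue for $B_\nu^{-1}$, restrict to $\T^n\times D_\varepsilon$ and pass to the limit $\nu\to\infty$ (using the convergence just established) to give $[M_\omega-Id]_{s,\alpha+}^{\T^n,D_\varepsilon},\ [M_\omega^{-1}-Id]_{s,\alpha+}^{\T^n,D_\varepsilon}\le\epsilon_0^{\frac12}\le c(\alpha,s,n)\epsilon_0^{\frac12}$.

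I do not expect a serious obstacle here: the whole content is that the bounds of Lemma \ref{L4.6} are uniform in $\nu$ when read on the limiting domain $\T^n\times D_\varepsilon$, plus completeness of the relevant matrix spaces. The only points requiring a little care are (a) checking that passing to the smaller domain $\T^n\times D_\varepsilon\subset\T^n_{\sigma_{\nu+1}}\times D_\nu$ never increases any norm, which is immediate from the definitions, and (b) verifying that the limit taken in the strong norm $[\cdot]_{s,\alpha+}^{\T^n,D_\varepsilon}$ coincides with the object $M_\omega$ already produced in the weaker operator norm, which follows from Lemma \ref{daishu01} iv). The $\mathcal{C}^1$-in-$\omega$ (Whitney) regularity of the limit is automatic, since it is built into the norm in which the sequence converges.
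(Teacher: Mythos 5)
Your argument is correct and is exactly the intended one: the paper in fact states Lemma \ref{App03} without proof, as an ``improvement'' of Lemma \ref{App02}, and the implicit argument is precisely yours --- run the Cauchy-sequence proof of Lemma \ref{App02} again, but in the norm $[\cdot]_{s,\alpha+}^{\T^n,D_\varepsilon}$, using the bounds of Lemma \ref{L4.6} restricted to $\T^n\times D_\varepsilon$, completeness of $(\mathcal{M}_{s,\alpha}^+,|\cdot|_{s,\alpha+})$ from Lemma \ref{completed}, and Lemma \ref{daishu01} iv) to identify the limit with the $M_\omega$ already constructed. Your two points of care (monotonicity of the norm under domain restriction, and identification of the strong-norm limit with the operator-norm limit) are exactly the right ones, so there is nothing to add.
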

\begin{Remark}
Note that $M_\omega -Id\in \mathcal{M}_{s,\alpha}^+$ and $\alpha>0$, it follows from Lemma \ref{daishu01} that $M_\omega -Id\in \mathfrak{L}(\ell_{1}^2, \ell_{1}^2)$ and satisfies
$$\|(M_\omega -Id)\xi\|_{1}\leq c(\alpha,s)[M_\omega -Id]^{\T^n, D_{\varepsilon}}_{s,\alpha+}\|\xi\|_{1}\leq c(\alpha,s,n)\epsilon_0^{\frac12}\|\xi\|_{1}, $$
for $(\theta,\omega)\in \T^n \times  D_{\varepsilon}$. Thus for $(\theta,\omega)\in \T^n \times  D_{\varepsilon}$
\begin{eqnarray}\label{Momega}
\|M_\omega \xi\|_{1}\leq c(\alpha,s,n)\|\xi\|_{1}.
\end{eqnarray}
Similarly,  for $(\theta,\omega)\in \T^n \times  D_{\varepsilon}$,
\begin{eqnarray}\label{Momegainverse}
\|M_\omega^{-1} \xi\|_{1}\leq c(\alpha,s,n)\|\xi\|_{1}.
\end{eqnarray}
Denote  $\Phi_\omega^\infty(\xi,\eta)=(\overline{M}_\omega\xi,M_\omega\eta)$. From (\ref{Momega}) and (\ref{Momegainverse}) $\Phi_\omega^\infty $ and its inverse are bounded operators from $Y_{1}$ into $Y_{1}$.
\end{Remark}

In the following we denote
$U_{\nu} : = B_{\nu}^{-1}N_0 B_{\nu} $, $U_{\infty} : = M_{\omega}^{-1}N_0 M_{\omega} $,
$V_{\nu} : = B_\nu^{-1}P^{(\nu)}B_\nu $ and $V_{\infty} : = \varepsilon M_{\omega}^{-1}P(\theta)M_{\omega} $ for simplicity.
We will prove that
 \begin{Lemma}\label{convergence02}
  For any $y\in\R^n$,\ $(\xi,\eta)\in Y_{s'}$ with $1\leq s'\leq \max\{s,1\}$,
\begin{equation}\label{phies}
\lim_{\nu\rightarrow\infty}H^{(\nu)}\circ\Phi^\nu(\xi,\eta)=\mathcal{H}\circ\Phi_\omega^\infty(\xi,\eta),
 \end{equation}
 uniformly for $\omega\in D_\varepsilon$ and $\theta\in\T^n$.
 \end{Lemma}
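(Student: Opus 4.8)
The plan is to evaluate both sides of (\ref{phies}) explicitly in the $(\xi,\eta)$-variables and compare them term by term; the only genuine difficulty is that $N_0$ is unbounded.

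First I would unwind the definitions. Restricted to the $Y_{s'}$-block, $\Phi^{\nu}$ acts as $(\xi,\eta)\mapsto(\overline{B}_\nu\xi,B_\nu\eta)$ with $B_\nu=e^{\mathrm{i}F_1}\cdots e^{\mathrm{i}F_\nu}$ and $B_\nu^{-1}=\overline{B}_\nu^{T}$ for $\theta\in\T^n$ by (\ref{inverseM}); hence, using bilinearity of $\langle\cdot,\cdot\rangle$ and leaving $\langle\omega,y\rangle$ unaffected by the $Y_{s'}$-transformation,
$$H^{(\nu)}\circ\Phi^{\nu}(\xi,\eta)=\langle\omega,y\rangle+\langle\xi,U_\nu\eta\rangle+\varepsilon\langle\xi,B_\nu^{-1}P^{(\nu)}(\theta)B_\nu\,\eta\rangle,$$
and, since Lemmas \ref{App02} and \ref{App03} supply $M_\omega$, $M_\omega^{-1}=\overline{M}_\omega^{T}$ on $\T^n$ and the corresponding facts for $\Phi_\omega^\infty$, $\mathcal H\circ\Phi_\omega^\infty(\xi,\eta)=\langle\omega,y\rangle+\langle\xi,U_\infty\eta\rangle+\varepsilon\langle\xi,M_\omega^{-1}P(\theta)M_\omega\,\eta\rangle$. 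Thus (\ref{phies}) is reduced to proving, uniformly in $(\theta,\omega)\in\T^n\times D_\varepsilon$ and for each fixed $(\xi,\eta)\in Y_{s'}$ with $1\le s'\le\max\{s,1\}$,
$$\langle\xi,(U_\nu-U_\infty)\eta\rangle\longrightarrow0\qquad\text{and}\qquad\langle\xi,(B_\nu^{-1}P^{(\nu)}B_\nu-M_\omega^{-1}PM_\omega)\eta\rangle\longrightarrow0.$$

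For the perturbative part I would split
$$B_\nu^{-1}P^{(\nu)}B_\nu-M_\omega^{-1}PM_\omega=B_\nu^{-1}P^{(\nu)}(B_\nu-M_\omega)+B_\nu^{-1}(P^{(\nu)}-P)M_\omega+(B_\nu^{-1}-M_\omega^{-1})PM_\omega,$$
pair each summand against $\xi$ through $|\langle a,b\rangle|\le\|a\|_1\|b\|_{-1}$ (transposing $B_\nu^{-1}=\overline{B}_\nu^{T}$, resp.\ $B_\nu^{-1}-M_\omega^{-1}=(\overline{B}_\nu-\overline{M}_\omega)^{T}$, onto the $\xi$-slot), and estimate it using: the uniform $\ell^2_1$-boundedness of $B_\nu,\overline{B}_\nu,M_\omega,\overline{M}_\omega$ (Lemma \ref{L4.6}, (\ref{Momega})--(\ref{Momegainverse}), Lemma \ref{daishu01}\,iv), and conjugation invariance of $|\cdot|_{s,\alpha+}$); the fact $P^{(\nu)},P\in\mathfrak{L}(\ell^2_1,\ell^2_{-1})$ with $|P^{(\nu)}|_{s,\alpha}\le C$ (Lemma \ref{daishu01}\,iii), Lemma \ref{PP}); and the estimates $[B_\nu-M_\omega]^{\T^n,D_\varepsilon}_{s,\alpha+},\,[B_\nu^{-1}-M_\omega^{-1}]^{\T^n,D_\varepsilon}_{s,\alpha+}\le c\,\epsilon_\nu^{1/2}$ (Lemma \ref{App03}) together with $|P^{(\nu)}-P|_{s,\alpha}\le C\sigma_\nu^{\beta}$ (Lemma \ref{L4.7}). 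This yields a bound $C(\epsilon_\nu^{1/2}+\sigma_\nu^{\beta})\|\xi\|_{s'}\|\eta\|_{s'}$, which goes to $0$ uniformly.

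The main obstacle is the normal-form part $U_\nu-U_\infty=B_\nu^{-1}N_0(B_\nu-M_\omega)+(B_\nu^{-1}-M_\omega^{-1})N_0M_\omega$, because $N_0=\mathrm{diag}(\lambda_a)$ loses two derivatives. The key point is that Hypothesis A1 ($\lambda_a\le c_1w_a$) gives $N_0\in\mathfrak{L}(\ell^2_1,\ell^2_{-1})$ with $\|N_0\zeta\|_{-1}\le c_1\|\zeta\|_1$; hence it is enough to land the argument of $N_0$ in $\ell^2_1$ and pair the result against $\xi\in\ell^2_{s'}\subset\ell^2_1$ via $\|\cdot\|_1\|\cdot\|_{-1}$. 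Since $B_\nu-M_\omega\in\mathcal{M}_{s,\alpha}^+$ maps $\ell^2_1\to\ell^2_1$ with norm $\le c\,\epsilon_\nu^{1/2}$ (Lemma \ref{daishu01}\,iv), Lemma \ref{App03}), one gets $\|N_0(B_\nu-M_\omega)\eta\|_{-1}\le C\epsilon_\nu^{1/2}\|\eta\|_1$; likewise $\|N_0M_\omega\eta\|_{-1}\le C\|\eta\|_1$ by (\ref{Momega}); and transposing $(B_\nu^{-1}-M_\omega^{-1})^{T}=\overline{B}_\nu-\overline{M}_\omega$ onto the $\xi$-slot gives $\|(\overline{B}_\nu-\overline{M}_\omega)\xi\|_1\le C\epsilon_\nu^{1/2}\|\xi\|_1$. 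Combining, $|\langle\xi,(U_\nu-U_\infty)\eta\rangle|\le C\epsilon_\nu^{1/2}\|\xi\|_{s'}\|\eta\|_{s'}\to0$ uniformly. Adding the two estimates, and noting that the $\langle\omega,y\rangle$ contributions on the two sides coincide, establishes (\ref{phies}). (The case $s<1$, which forces $s'=1>s$, causes no trouble: each operator bound used above is stated either on $\ell^2_1$ alone or via the embedding $\mathcal{M}_{s,\alpha}^+\hookrightarrow\mathfrak{L}(\ell^2_1,\ell^2_1)$, valid for every $s\ge0$.)
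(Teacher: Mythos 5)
Your proposal is correct and follows essentially the same route as the paper, which proves the lemma by expanding $H^{(\nu)}\circ\Phi^{\nu}$ and $\mathcal{H}\circ\Phi_\omega^\infty$ and reducing to the convergence of the two pieces $U_\nu\to U_\infty$ and $V_\nu\to V_\infty$ (Lemmas \ref{convergence03} and \ref{convergence04}), using exactly the ingredients you cite: the telescoping with $B_\nu-M_\omega$, $B_\nu^{-1}-M_\omega^{-1}$, $P^{(\nu)}-P$ from Lemmas \ref{App03}, \ref{L4.6}, \ref{L4.7}, the algebra Lemma \ref{daishu01}, and Hypothesis A1 together with $s'\geq1$ to absorb the two-derivative loss of $N_0$. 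Your variants (a slightly different ordering of the telescoping and pairing in $\ell^2_1\times\ell^2_{-1}$ instead of the paper's $\|\cdot\|_{s'}\,\|\cdot\|_{s'-2}$ pairing) are only cosmetic.
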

\begin{proof}
 From
the definition, $\Phi^\nu(\xi,\eta)=(\overline{B_\nu}\xi,B_\nu\eta)$ and $(\overline{B_\nu})^T=B_\nu^{-1}$.
Thus,
\begin{eqnarray*}
H^{(\nu)}\circ\Phi^\nu&=&\la \omega,y\ra+\la \overline{B_\nu}\xi,(N_0+P^{(\nu)})B_\nu\eta \ra\\
&=&\la \omega,y\ra+\la \xi,B_\nu^{-1}(N_0+P^{(\nu)})B_\nu\eta \ra\\
&=&\la \omega,y\ra+\la \xi,U_\nu\eta \ra+\la \xi,V_\nu\eta \ra.
\end{eqnarray*}
On the other hand,  by a straightforward computation, we have
\begin{eqnarray*}
\mathcal{H}\circ\Phi_\omega^\infty&=&\la \omega,y\ra+\la \xi,U_\infty\eta \ra+\la \xi,V_\infty\eta \ra.
\end{eqnarray*}
Then (\ref{phies}) is proved by the following two lemmas.
\end{proof}

 \begin{Lemma}\label{convergence03}
  For   $(\xi,\eta)\in Y_{\bar{s}}$ with $\bar{s}\geq 1$,
$$\lim_{\nu\rightarrow\infty}\la \xi,(V_\nu-V_\infty)\eta \ra=0,$$ uniformly for $\omega\in D_\varepsilon,\ \theta\in\T^n$.
\end{Lemma}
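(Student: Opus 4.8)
The plan is to estimate $V_\nu-V_\infty$ in the matrix norm $|\cdot|_{s,\alpha}$ and then descend to the quadratic form. Since $\bar s\ge 1$, Lemma~\ref{daishu01}\,iii) gives for any $A\in\mathcal M_{s,\alpha}$ that $|\la\xi,A\eta\ra|\le\|\xi\|_{\bar s}\|A\eta\|_{-\bar s}\le c(\alpha,s)|A|_{s,\alpha}\|\xi\|_{\bar s}\|\eta\|_{\bar s}$, so it suffices to show $\sup_{(\theta,\omega)\in\T^n\times D_\varepsilon}|V_\nu-V_\infty|_{s,\alpha}\to 0$ as $\nu\to\infty$. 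Throughout I would use that at each fixed $(\theta,\omega)\in\T^n\times D_\varepsilon\subset\T^n_{\sigma_{\nu+1}}\times D_\nu$ the plain norm of any of the matrices in play is dominated by its relevant $[\cdot]^{D,\sigma}$ norm.

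First I would telescope the difference. Writing $V_\nu=\varepsilon B_\nu^{-1}P^{(\nu)}B_\nu$ and $V_\infty=\varepsilon M_\omega^{-1}P M_\omega$, one has
\begin{align*}
V_\nu-V_\infty &=\varepsilon(B_\nu^{-1}-M_\omega^{-1})P^{(\nu)}B_\nu\\
&\quad+\varepsilon M_\omega^{-1}(P^{(\nu)}-P)B_\nu+\varepsilon M_\omega^{-1}P(B_\nu-M_\omega).
\end{align*}
Next I would bound each of the three triple products by Lemma~\ref{daishu01}\,i)--ii): writing $B_\nu=Id+(B_\nu-Id)$ and $M_\omega^{-1}=Id+(M_\omega^{-1}-Id)$, each product has one factor in $\mathcal M_{s,\alpha}^+$ and the remaining ones in $\mathcal M_{s,\alpha}$, so each product lies in $\mathcal M_{s,\alpha}$. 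The uniformly bounded factors are controlled via $|P^{(\nu)}|_{s,\alpha}\le|P|_{s,\alpha}+|P^{(\nu)}-P|_{s,\alpha}\le C$ on $\T^n$ (Lemmas~\ref{PP} and~\ref{L4.7}), $[B_\nu-Id]_{s,\alpha+},\,[B_\nu^{-1}-Id]_{s,\alpha+}\le\epsilon_0^{1/2}$ (Lemma~\ref{L4.6}) and $[M_\omega^{-1}-Id]_{s,\alpha+}\le c\epsilon_0^{1/2}$ (Lemma~\ref{App03}). The genuinely small factors are $[B_\nu^{-1}-M_\omega^{-1}]_{s,\alpha+},\,[B_\nu-M_\omega]_{s,\alpha+}\le c\epsilon_\nu^{1/2}$ (Lemma~\ref{App03}\,i)) for the first and third terms, and $|P^{(\nu)}-P|_{s,\alpha}\le C\sigma_\nu^\beta=C\epsilon_\nu$ for the middle term, where I use $\sigma_\nu=\epsilon_{\nu-1}^{3/(2\beta)}$ together with Lemma~\ref{L4.7}. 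Multiplying these out with the algebra lemma gives $|V_\nu-V_\infty|_{s,\alpha}\le C\varepsilon\,\epsilon_\nu^{1/2}$, hence $|\la\xi,(V_\nu-V_\infty)\eta\ra|\le C\varepsilon\,\epsilon_\nu^{1/2}\|\xi\|_{\bar s}\|\eta\|_{\bar s}\to 0$; since none of the constants depend on $(\theta,\omega)\in\T^n\times D_\varepsilon$, the convergence is uniform.

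I do not expect a serious obstacle. The only point requiring care is the bookkeeping that places the $\mathcal M_{s,\alpha}^+$ factor in the correct slot for each application of Lemma~\ref{daishu01} (recall that both $\mathcal M_{s,\alpha}\cdot\mathcal M_{s,\alpha}^+$ and $\mathcal M_{s,\alpha}^+\cdot\mathcal M_{s,\alpha}$ land in $\mathcal M_{s,\alpha}$, while $\mathcal M_{s,\alpha}\cdot\mathcal M_{s,\alpha}$ need not), together with confirming that all the resulting powers of $\epsilon_\nu$ tend to zero. In essence the lemma is a triangle inequality in $\mathcal M_{s,\alpha}$ driven by the already-established convergences $B_\nu\to M_\omega$, $B_\nu^{-1}\to M_\omega^{-1}$ (Lemma~\ref{App03}) and $P^{(\nu)}\to P$ (Lemma~\ref{L4.7}).
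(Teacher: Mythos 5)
Your argument is correct and is essentially the paper's own proof: the same three-term telescoping of $V_\nu-V_\infty$ (the paper groups it as $B_\nu^{-1}(P^{(\nu)}-\varepsilon P)B_\nu+\varepsilon(B_\nu^{-1}-M_\omega^{-1})PB_\nu+\varepsilon B_\nu^{-1}P(B_\nu-M_\omega)$, yours merely places $M_\omega^{-1}$ rather than $B_\nu^{-1}$ on the left of the last two pieces), estimated with the same ingredients --- Lemma~\ref{daishu01}, Lemma~\ref{L4.6}, Lemma~\ref{App03} and the smoothing rate $\sigma_\nu^\beta=\epsilon_\nu$ from Lemmas~\ref{PP} and~\ref{L4.7} --- followed by the same Cauchy--Schwarz pairing $|\la\xi,A\eta\ra|\le c(\alpha,s)|A|_{s,\alpha}\|\xi\|_{\bar s}\|\eta\|_{\bar s}$ for $\bar s\ge 1$. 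Your explicit $\varepsilon$ in $V_\nu$ also tidies up the paper's slightly inconsistent bookkeeping, but the route and the final bound $C\varepsilon\epsilon_\nu^{1/2}$ coincide with the paper's.
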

\begin{proof}Consider
\begin{eqnarray*}
V_\nu-V_\infty&=&B_\nu^{{-1}}P^{(\nu)}B_\nu-\varepsilon M_\omega^{-1} P {M_\omega}\\
&=&B_\nu^{{-1}}(P^{(\nu)}-\varepsilon P)B_\nu+\varepsilon (B_\nu^{{-1}}-M_\omega^{-1}) P B_\nu+\varepsilon B_\nu^{{-1}}P(B_\nu-{M_\omega})\\
&:=&I_1+I_2+I_3
\end{eqnarray*}
We first estimate $I_1$. Note for $\theta\in\T^n,$ from Lemma \ref{l4.1},
\begin{eqnarray*}\label{pinfty}
[P^{(\nu)}-\varepsilon P]_{s,\alpha}^{D_\varepsilon,\T^n}\leq\sum_{m=\nu}^\infty[P^{(m+1)}-P^{(m)}]_{s,\alpha}^{D_\varepsilon,\T^n}\leq
c(n,\alpha,\beta)\varepsilon\epsilon_{\nu}.
\end{eqnarray*}
On the other hand, by  Lemma \ref{L4.6} we have
\begin{eqnarray}\label{binfty}
 [B_\nu^{{-1}}-Id]_{s,\alpha+}^{D_\varepsilon,\T^n},\
[B_\nu-Id]_{s,\alpha+}^{D_\varepsilon,\T^n}\leq   \epsilon_0^{\frac{1}{2}},
 \end{eqnarray}
then, from Lemma \ref{daishu},
\begin{eqnarray*}\label{pinfty}
[I_1]_{s,\alpha}^{D_\varepsilon,\T^n}&\leq&  [(B_\nu^{{-1}}-Id)(P^{(\nu)}-\varepsilon P)(B_\nu-Id)]_{s,\alpha}^{D_\varepsilon,\T^n}+ [(B_\nu^{{-1}}-Id)(P^{(\nu)}-\varepsilon P)]_{s,\alpha}^{D_\varepsilon,\T^n}\nonumber\\
&+ &[(P^{(\nu)}-\varepsilon P)(B_\nu-Id)]_{s,\alpha}^{D_\varepsilon,\T^n}+ [ P^{(\nu)}-\varepsilon P ]_{s,\alpha}^{D_\varepsilon,\T^n}\nonumber\\
&\leq&  \epsilon_{\nu}.
\end{eqnarray*}
For $I_2$, note that $[B_\nu^{{-1}}-M_\omega^{-1}]_{s,\alpha+}^{D_\varepsilon,\T^n}\leq\epsilon_\nu^{\frac{1}{2}}$ by  Lemma \ref{App03}, thus
\begin{eqnarray*}\label{}
[I_2]_{s,\alpha}^{D_\varepsilon,\T^n}\leq  [\varepsilon (B_\nu^{{-1}}-M_\omega^{-1}) P (B_\nu-Id)]_{s,\alpha}^{D_\varepsilon,\T^n}+[\varepsilon (B_\nu^{{-1}}-M_\omega^{-1}) P  ]_{s,\alpha}^{D_\varepsilon,\T^n}\leq c(s,n,\alpha,\beta)  \varepsilon \epsilon_{\nu}^{\frac{1}{2}}
\end{eqnarray*}
combined with (\ref{binfty}).
Similarly,
$
[I_3]_{s,\alpha}^{D_\varepsilon,\T^n}\leq   c(s,n,\alpha,\beta) \varepsilon \epsilon_{\nu}^{\frac{1}{2}}.
$ Finally, we have
\begin{eqnarray}\label{VnuminusVinfty}
[V_\nu-V_\infty]_{s,\alpha}^{D_\varepsilon,\T^n}\leq   c(s,n,\alpha,\beta)\varepsilon   \epsilon_{\nu}^{\frac{1}{2}},
\end{eqnarray}
and $V_\nu-V_\infty\in\mathfrak{L}(\ell_{\bar{s}}^2,\ell_{-\bar{s}}^2)$.
By Lemma \ref{daishu} and (\ref{VnuminusVinfty}) we obtain
\begin{eqnarray*}
|\la \xi,(V_\nu-V_\infty)\eta\ra|\leq\|\xi\|_{\bar{s}} \|(V_\nu-V_\infty)\eta\|_{-\bar{s}}\leq  c(s,n,\alpha,\beta)\varepsilon \epsilon_{\nu}^{\frac{1}{2}}\|\xi\|_{\bar{s}}\|\eta\|_{\bar{s}}\rightarrow 0
\end{eqnarray*}
as $\nu\rightarrow0$ uniformly for $\omega\in D_\varepsilon,\ \theta\in\T^n$.

\end{proof}

\begin{Lemma}\label{convergence04}
 For  $(\xi,\eta)\in Y_{s'}$ with $1\leq s'\leq \max\{s,1\}$,
$$\lim_{\nu\rightarrow\infty}\la \xi,(U_\nu-U_\infty)\eta \ra=0,$$ uniformly for $\omega\in D_\varepsilon,\ \theta\in\T^n$.
\end{Lemma}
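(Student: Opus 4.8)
The plan is to bound the quadratic form $\la\xi,(U_\nu-U_\infty)\eta\ra$ on $Y_{s'}$, $1\le s'\le\max\{s,1\}$, directly, using that $B_\nu-M_\omega$ belongs to the smoothing class $\mathcal M_{s,\alpha}^+$ and is of size $\epsilon_\nu^{1/2}$ uniformly in $(\theta,\omega)$ (Lemma \ref{App03}), so that it can absorb the two derivatives lost under the unbounded diagonal operator $N_0$ (note that $U_\nu-U_\infty$ itself is \emph{not} in $\mathcal M_{s,\alpha}$, so Lemma \ref{daishu01}iii) cannot be applied as in the proof of Lemma \ref{convergence03}). First I would write the algebraic splitting
\[
U_\nu-U_\infty=B_\nu^{-1}N_0(B_\nu-M_\omega)+(B_\nu^{-1}-M_\omega^{-1})N_0 M_\omega ,
\]
and, using $B_\nu^{-1}=\overline{B_\nu}^{\,T}$ and $M_\omega^{-1}=\overline{M_\omega}^{\,T}$ (see (\ref{inverseM}) and the line after Lemma \ref{App02}, whence $(B_\nu^{-1}-M_\omega^{-1})^T=\overline{B_\nu-M_\omega}$), move the transposed factors onto $\xi$:
\[
\la\xi,(U_\nu-U_\infty)\eta\ra=\la\overline{B_\nu}\,\xi,\,N_0(B_\nu-M_\omega)\eta\ra+\la\overline{B_\nu-M_\omega}\,\xi,\,N_0 M_\omega\eta\ra .
\]

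Next I would assemble the uniform bounds. By Lemmas \ref{L4.6}, \ref{App03} and \ref{daishu01}iv), the matrices $B_\nu,\overline{B_\nu},M_\omega,\overline{M_\omega}$ act on $\ell_{s'}^2$ with norm $\le 1+c\epsilon_0^{1/2}\le 2$, while $B_\nu-M_\omega$ and $\overline{B_\nu-M_\omega}\in\mathcal M_{s,\alpha}^+$ act on $\ell_{s'}^2$ (into $\ell_{s'+2\alpha}^2$ if $s'\le s$, into $\ell_1^2$ if $s'=1>s$) with norm $\le c\epsilon_\nu^{1/2}$; and Hypothesis A1 gives $\|N_0\zeta\|_{r-2}\le c_1\|\zeta\|_r$ for every $r$. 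Hence $N_0(B_\nu-M_\omega)\eta$ and $N_0 M_\omega\eta$ lie in $\ell_{s'-2}^2$ with norms $\le c\epsilon_\nu^{1/2}\|\eta\|_{s'}$ and $\le c\|\eta\|_{s'}$, while $\overline{B_\nu}\,\xi$ and $\overline{B_\nu-M_\omega}\,\xi$ lie in $\ell_{s'}^2$ with norms $\le 2\|\xi\|_{s'}$ and $\le c\epsilon_\nu^{1/2}\|\xi\|_{s'}$. Since $s'\ge1$ we have $-s'\le s'-2$, so pairing $\ell_{s'}^2$ against $\ell_{s'-2}^2$ is dominated by pairing $\ell_1^2$ against $\ell_{-1}^2$, and Cauchy--Schwarz on each term gives $|\la\xi,(U_\nu-U_\infty)\eta\ra|\le c\epsilon_\nu^{1/2}\|\xi\|_{s'}\|\eta\|_{s'}$, with $c$ independent of $(\theta,\omega)$. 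Letting $\nu\to\infty$ and using $\epsilon_\nu\downarrow0$ then closes the argument, uniformly for $\omega\in D_\varepsilon$ and $\theta\in\T^n$; the same count also shows $U_\nu,U_\infty$ are well-defined continuous quadratic forms on $Y_{s'}$, since $\la\xi,U_\nu\eta\ra=\la\overline{B_\nu}\,\xi,N_0 B_\nu\eta\ra$ pairs $\ell_{s'}^2$ with $\ell_{s'-2}^2$ and $2s'-2\ge0$.

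The step I expect to be the crux — and the place where the restriction $1\le s'\le\max\{s,1\}$ is forced, rather than merely the energy space $\mathcal H^1$ as in \cite{GP} — is precisely this index balance: the two derivatives lost under $N_0$ must be absorbed, and having $\overline{B_\nu}\,\xi\in\ell_{s'}^2$ with $s'\ge1$ is exactly enough to make the pairings finite, the extra $2\alpha$ of smoothing carried by $B_\nu-M_\omega\in\mathcal M_{s,\alpha}^+$ (available when $s'\le s$) being a convenience rather than a necessity. Everything else — the algebraic splitting, the operator-norm bounds drawn from Lemmas \ref{daishu01}, \ref{L4.6} and \ref{App03}, and the passage to the limit — is routine.
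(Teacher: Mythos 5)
Your proposal is correct and follows essentially the same route as the paper: split $U_\nu-U_\infty$ into two pieces each carrying one small factor ($B_\nu-M_\omega$ or $B_\nu^{-1}-M_\omega^{-1}$), transpose the left factor onto $\xi$ via $B_\nu^{-1}=\overline{B}_\nu^{\,T}$, $M_\omega^{-1}=\overline{M}_\omega^{\,T}$, use $\|N_0\zeta\|_{s'-2}\leq c_1\|\zeta\|_{s'}$ from Hypothesis A1 together with Lemmas \ref{daishu01}, \ref{L4.6}, \ref{App03}, and close the Cauchy--Schwarz pairing of $\ell^2_{s'}$ against $\ell^2_{s'-2}$ using exactly the restriction $1\leq s'\leq\max\{s,1\}$. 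The only differences (which term carries the small factor, and handling the $s'=1>s$ case via the $\mathfrak{L}(\ell^2_1,\ell^2_1)$ part of Lemma \ref{daishu01} iv)) are cosmetic, and you have identified the same crux as the paper.
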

\begin{proof}
Consider
\begin{eqnarray*}
\la \xi, (U_\nu-U_\infty)\eta\ra &=&\la \xi, (B_\nu^{{-1}}N_0 B_\nu-  M_\omega^{-1} N_0 {M_\omega})\eta\ra \\
&=& \la \xi, (B_\nu^{{-1}}-M_\omega^{-1}) N_0 B_\nu \eta\ra +  \la \xi, M_\omega^{-1}N_0({M_\omega} -B_\nu)\eta \ra.
\end{eqnarray*}
\indent We estimate the first term.  For
$ {\eta'}\in\ell_{s'}^2$, $\|N_0\eta'\|_{s'-2}\leq c_1\|\eta'\|_{s'}$ by $\lambda_a\leq c_1w_a$. From Lemma \ref{binfty} and Lemma \ref{daishu01}
we obtain $\|N_0B_{\nu}\eta\|_{s'-2}\leq c(\alpha, s, n)\|\eta\|_{s'}$.
Therefore,
\begin{eqnarray*}\label{}
&&|\la \xi,(B_\nu^{{-1}}-M_\omega^{-1}) N_0 B_\nu\eta \ra|\\
&=&|\la (\overline{B}_{\nu}-\overline{M}_\omega) \xi,N_0 B_\nu\eta \ra|\\
{\rm \underline{from\ Cauchy\  and \ 1\leq s'\leq \max\{s,1\} }} &\leq &  \|(\overline{B}_\nu-\overline{M}_\omega) \xi\|_{s'}\|N_0 B_\nu\eta\|_{s'-2}\\
{\rm \underline{Lemma\  \ref{App03},\  Lemma\  \ref{daishu01}}} &\leq& c(\alpha, s,n)\epsilon_\nu^{\frac{1}{2}}\|\xi\|_{s'}\| \eta\|_{s'}.
\end{eqnarray*}
Similarly, for $(\xi,\eta)\in  Y_{s'}$, we have
$|\la \xi,M_\omega^{-1}N_0({M_\omega} -B_\nu)\eta \ra| \leq  c(\alpha, s,n) \epsilon_\nu^{\frac{1}{2}}\|\xi\|_{s'}\| \eta\|_{s'}$. The conclusion is clear now. \end{proof}
\indent Combining with Lemma \ref{convergence01} and Lemma \ref{convergence02}, we finish the proof of  Lemma \ref{convergence}.\qed \\
\indent From Lemma \ref{convergence} and the concrete form of $\Phi^{\infty}$, we obtain that $$(\omega t, \star, \overline{M}_{\omega}(\omega t)e^{{-\rm i}\overline{N}_{\infty}t}\xi_0, M_{\omega}(\omega t)e^{{\rm i} N_{\infty} t}\eta_0)$$ are the solutions of the Hamiltonian system (\ref{autohs}). Thus, $(\overline{M}_{\omega}(\omega t)e^{{-\rm i}\overline{N}_{\infty}t}\xi_0, M_{\omega}(\omega t)e^{{\rm i} N_{\infty} t}\eta_0)$ are clearly the solutions of the Hamiltonian system (\ref{hameq00}). We complete the proofs of Theorem \ref{MainTheorem}. \qed
\section{Appendix}\label{appendix}
\subsection{Proof of Lemma \ref{psismooth}.}
\begin{proof}
Recall that $$ \Psi_\omega\left(\theta\right)\left(\sum_{a\in\mathcal{E}}\xi_a\Phi_a\left(x\right)\right)=\sum_{a\in\mathcal{E}}\left(M^T_\omega\left(\theta\right)\xi\right)_a\Phi_a\left(x\right).$$ From Theorem \ref{MainTheorem}, it is easy to show by definition $M^T_\omega-Id \in{\mathcal{C}^{\mu}\left(\T^n, \mathfrak{L}\left(\ell^2_{s'}, \ell^2_{s'+2\alpha}\right)\right) }$ and
\begin{eqnarray*}
\|M^T_\omega -Id\|_{\mathcal{C}^{\mu}\left(\T^n, \mathfrak{L}\left(\ell^2_{s'}, \ell^2_{s'+2\alpha}\right)\right) }&\leq&   C\left(n,\beta, \mu, d, s\right) \epsilon^{\frac{3}{2\beta}\left(\frac{2}{9}\beta-\mu\right)}.
\end{eqnarray*}
(a)\quad From definition,
\begin{eqnarray*}
&&\|\Psi_\omega(\cdot)-id\|_{ \mathfrak{L}\left(\mathcal{H}^{s'},\mathcal{H}^{s'+2\alpha}\right)}=\sup_{\|u\|_{\mathcal{H}^{s'}}=1}\|\Psi_\omega(\cdot)u-u\|_{\mathcal{H}^{s'+2\alpha}}\\
&=&\sup_{\|\xi\|_{\ell^2_{s'}}=1}
\|M^T_\omega\xi-\xi\|_{\ell^2_{s'+2\alpha}} = \|M^T_\omega-Id\|_{\mathfrak{L}\left(\ell^2_{s'},\ell^2_{s'+2\alpha}\right)}\leq   C\left(n,\beta, \mu, d, s\right) \epsilon^{\frac{3}{2\beta}\left(\frac{2}{9}\beta-\mu\right)}.
\end{eqnarray*}
(b)\quad For $b=\mu-[\mu]\in\left(0,1\right)$, $z_1,z_2\in \R^{n}$ with $0<|z_1-z_2|<2\pi$,
\begin{eqnarray*}
&&\frac{\|\Psi_\omega\left(z_1\right)-\Psi_\omega\left(z_2\right)\|_{\mathfrak{L}\left(\mathcal{H}^{s'},\mathcal{H}^{s'+2\alpha}\right)}}{{|z_1-z_2|^{b}}}\\
&=&\frac{1}{{|z_1-z_2|^{b}}}\sup_{\|u\|_{\mathcal{H}^{s'}}=1}\|\Psi_\omega \left(z_1\right)u-\Psi_\omega\left(z_2\right)u\|_{ \mathcal{H}^{s'+2\alpha} }\\
&=&\frac{1}{{|z_1-z_2|^{b}}}\sup_{\|\xi\|_{\ell^2_{s'}}=1}\|M^T_\omega\left(z_1\right)\xi-M^T_\omega\left(z_2\right)\xi\|_{ \ell^2_{s'+2\alpha}}\\
&= &\frac{1}{{|z_1-z_2|^{b}}} \|M^T_\omega\left(z_1\right)-M^T_\omega\left(z_2\right)\|_{\mathfrak{L}\left(\ell^2_{s'},\ell^2_{s'+2\alpha}\right)}
\end{eqnarray*}
which shows that
\begin{eqnarray*}
\|\Psi_\omega-id\|_{\mathcal{C}^{b}\left(\T^n, \mathfrak{L}\left(\mathcal{H}^{s'},\mathcal{H}^{s'+2\alpha}\right)\right)}= \|M^T_\omega-Id\|_{\mathcal{C}^{b}\left(\T^n, \mathfrak{L}\left(\ell^2_{s'}, \ell^2_{s'+2\alpha}\right)\right) }\leq   C\left(n,\beta, \mu, d, s\right) \epsilon^{\frac{3}{2\beta}\left(\frac{2}{9}\beta-\mu\right)}.
\end{eqnarray*}
(c)\quad Denote $\la\mathcal{A}\left(z\right),h\ra u:=\sum_{a\in\mathcal{E}}\left(\la A\left(z\right),h\ra\xi\right)_a\Phi_a\left(x\right)$ for $h\in\R^n$ where we use the notation $A:=\left(M^T_\omega-Id\right)'_z$ and $\xi\in \ell_{s'}^2$ and $0\leq s'\leq s$. Note $M_{\omega}^T-Id\in C^{\mu}(\T^n, \mathfrak{L}(\ell_{s'}^2, \ell_{s'+2\alpha}^2))
$,  it follows that  for any $z\in \T^n$, $\mathcal{A}(z)\in \mathfrak{L}(\R^n, \mathfrak{L}(\mathcal{H}^{s'}, \mathcal{H}^{s'+2\alpha}))$.
This is because
\begin{eqnarray}
\|\mathcal{A}\|_{\mathfrak{L}\left(\R^n, \mathfrak{L}\left(\mathcal{H}^{s'},\mathcal{H}^{s'+2\alpha}\right)\right)}&=&\sup_{\|u\|_{\mathcal{H}^{s'}}=1,\|h\|=1} \|\la\mathcal{A}\left(z\right),h\ra u\|_{ \mathcal{H}^{s'+2\alpha} }\nonumber\\
&=&   \sup_{\|\xi\|_{\ell^2_{s'}}=1,\|h\|=1} \|\la A\left(z\right),h\ra\xi\|_{  \ell^2_{s'+2\alpha} }\nonumber\\
&= &   \| A(z)\|_{\mathfrak{L}\left(\R^n, \mathfrak{L}\left(\ell^2_{s'},\ell^2_{s'+2\alpha} \right)\right)}\label{dengnorm}\\
&\leq& \|M^T_\omega-Id\|_{\mathcal{C}^{b}\left(\T^n, \mathfrak{L}\left(\ell^2_{s'}, \ell^2_{s'+2\alpha}\right)\right) }.\nonumber
\end{eqnarray}
Given $z_0\in\T^n$,
\begin{eqnarray*}
&&\|\Psi\left(z\right)-\Psi\left(z_0\right)-\la\mathcal{A}\left(z_0\right),z-z_0\ra\|_{ \mathfrak{L}\left(\mathcal{H}^{s'},\mathcal{H}^{s'+2\alpha}\right)}\\
&=&\sup_{\|u\|_{\mathcal{H}^{s'}}=1} \|\left(\Psi\left(z\right)-\Psi\left(z_0\right)-\la\mathcal{A}\left(z_0\right),z-z_0\ra\right) u\|_{ \mathcal{H}^{s'+2\alpha} }\\
&=&\sup_{\|\xi\|_{\ell^2_{s'}}=1} \|\left(M^T_\omega\left(z\right)-M^T_\omega\left(z_0\right)-\la A\left(z_0\right),z-z_0\ra\right)\xi\|_{  \ell^2_{s'+2\alpha} }\\
&=& \|M^T_\omega\left(z\right)-M^T_\omega\left(z_0\right)-\la A\left(z_0\right),z-z_0\ra\|_{ \mathfrak{L}\left(\ell^2_{s'},\ell^2_{s'+2\alpha} \right)}
\end{eqnarray*}
Note $M^T_\omega-Id \in{\mathcal{C}^{\mu}\left(\T^n, \mathfrak{L}\left(\ell^2_{s'}, \ell^2_{s'+2\alpha}\right)\right) }$, then
$$\|M^T_\omega\left(z\right)-M^T_\omega\left(z_0\right)-\la A\left(z_0\right),z-z_0\ra\|_{ \mathfrak{L}\left(\ell^2_{s'},\ell^2_{s'+2\alpha} \right)}=o\left(|z-z_0|\right),\ z\rightarrow z_0.$$
Therefore, $$\|\Psi\left(z\right)-\Psi\left(z_0\right)-\la\mathcal{A}\left(z_0\right),z-z_0\ra\|_{ \mathfrak{L}\left(\mathcal{H}^{s'},\mathcal{H}^{s'+2\alpha}\right)}=o\left(|z-z_0|\right),\ z\rightarrow z_0,$$
which shows that $\Psi\left(z\right)$ is Fr\'echet differentiable at $z_0.$ Moreover, following (b), we have
\begin{eqnarray*}
&&\sup_{\substack{\|u\|_{\mathcal{H}^{s'}}=1,\|h\|=1\\z_1,z_2\in \R^n, 0<|z_1-z_2|<2\pi}} \frac{1}{{|z_1-z_2|^{b}}} \|\la \mathcal{A}\left(z_1\right)-\mathcal{A}\left(z_2\right),h\ra u\|_{ \mathcal{H}^{s'+2\alpha} }\\
&=&\sup_{\substack{\|\xi\|_{\ell^2_{s'}}=1,\|h\|=1\\z_1,z_2\in \R^n, 0<|z_1-z_2|<2\pi}} \frac{1}{{|z_1-z_2|^{b}}}\|\la  {A}\left(z_1\right)- {A}\left(z_2\right),h\ra \xi\|_{ \ell^2_{s'+2\alpha} }\\
&= &\sup_{ z_1,z_2\in \R^n, 0<|z_1-z_2|<2\pi} \frac{1}{{|z_1-z_2|^{b}}} \| {A}\left(z_1\right)- {A}\left(z_2\right)\|_{\mathfrak{L}\left(\R^n,\mathfrak{L}\left(\ell^2_{s'},\ell^2_{s'+2\alpha}\right)\right)}.
%&\leq&\| {A}\|_{\mathcal{C}^{b}\left(\T^n, \mathfrak{L}\left(\R^n,\mathfrak{L}\left(\ell^2_{s'},\ell^2_{s'+2\alpha}\right)\right)\right)}
\end{eqnarray*}
Combining with (\ref{dengnorm}), we have
\begin{eqnarray*}
\|\Psi_\omega(\cdot )-id\|_{\mathcal{C}^{1+b}\left(\T^n, \mathfrak{L}\left(\mathcal{H}^{s'},\mathcal{H}^{s'+2\alpha}\right)\right)}
= \|M^T_\omega(\cdot )-Id\|_{\mathcal{C}^{1+b}\left(\T^n, \mathfrak{L}\left(\ell^2_{s'}, \ell^2_{s'+2\alpha}\right)\right) }\leq   C\left(n,\beta, \mu, d, s\right) \epsilon^{\frac{3}{2\beta}\left(\frac{2}{9}\beta-\mu\right)}
\end{eqnarray*}
with $1+b\leq\mu.$  Inductively, we can show that
 \begin{eqnarray*}
\|\Psi_\omega(\cdot)-id\|_{\mathcal{C}^{k+b}\left(\T^n, \mathfrak{L}\left(\mathcal{H}^{s'},\mathcal{H}^{s'+2\alpha}\right)\right)}
= \|M^T_\omega(\cdot )-Id\|_{\mathcal{C}^{k+b}\left(\T^n, \mathfrak{L}\left(\ell^2_{s'}, \ell^2_{s'+2\alpha}\right)\right) }\leq   C\left(n,\beta, \mu, d, s\right) \epsilon^{\frac{3}{2\beta}\left(\frac{2}{9}\beta-\mu\right)}
\end{eqnarray*}
with $k+b\leq\mu.$ Thus we finish the proof of Lemma \ref{psismooth}.
\end{proof}

\subsection{Proof of Lemma \ref{smoothinginverse}}
\begin{proof}
Following Salamon\cite{Sal04}, it is enough to consider the case $\mu=\ell$. Moreover, once the result has been established for $0<\ell<1$ it follows for $\ell>1$  by
Cauchy's estimate. Therefore we assume $0<\iota=\mu=\ell<1$.\\
\indent Define $g_\nu=f_\nu-f_{\nu-1}$. Then $f=\sum g_\nu$ satisfies the estimate
\begin{eqnarray*}
|f|_{X}\leq c\sum_{\nu=1}^\infty\sigma_\nu^{\iota}=c\sum_{\nu=1}^\infty\sigma^{\iota(\frac{3}{2})^\nu}\leq \frac{2c}{\iota}\sigma^{\iota},
\end{eqnarray*}
where we use the fact that $\sum_{\nu=1}^\infty\sigma^{\iota(\frac{3}{2})^\nu}\leq \sigma^{\iota}\sum_{\nu\geq0}(\frac{1}{2^\nu})^{\iota}\leq \frac{2 }{\iota}\sigma^{\iota}$ for $0<\sigma\leq1/4.$\\
\indent For $x,y\in\R^n$ with $\sigma<|x-y|\leq1$ this implies
$
|f(x)-f(y)|_{X}\leq \frac{4c}{\iota}\sigma^{\iota}\leq \frac{4c}{\iota}|x-y|^{\iota}.
$
In the case $0<|x-y|\leq\sigma$ there is an integer $N\geq0$ such that
$\sigma_{N+1}<|x-y|\leq\sigma_N$.
Following Cauchy's estimate, $|\partial_xg_\nu(u)|_{X}\leq c\sigma_\nu^{\iota-1}$ for every $u\in\R^n$, we have
$
|g_\nu(x)-g_\nu(y)|_{X}\leq c\sigma_\nu^{\iota-1}|x-y|.
$
We shall use this estimate for $\nu=1,2,\cdots,N$. For $\nu\geq N+1$ we use the trivial estimate
$
|g_\nu(x)-g_\nu(y)|_{X}\leq 2c\sigma_\nu^{\iota}.
$
Taking into account the inequalities we obtain that
\begin{eqnarray*}
|f(x)-f(y)|_{X}&\leq& \sum_{1\leq\nu\leq N}|g_\nu(x)-g_\nu(y)|_{X}+\sum_{\nu> N}|g_\nu(x)-g_\nu(y)|_{X}\\
&\leq& c|x-y|\sum_{1\leq\nu\leq N}\sigma_\nu^{\iota-1}+2c\sum_{\nu> N}\sigma_\nu^{\iota}\\
&\leq&  c|x-y|\sigma_N^{\iota-1}\sum_{0\leq\nu\leq N-1}\left(\frac{1}{2^\nu}\right)^{1-\iota}+2c\sigma_{N+1}^{\iota}\sum_{\nu\geq0}\left(\frac{1}{2^\nu}\right)^{\iota}\\
&\leq& \frac{4c}{1-\iota}|x-y|\sigma_N^{\iota-1}+\frac{4c}{\iota}\sigma_{N+1}^{\iota}\\
&=& \frac{4c}{\iota(1-\iota)}|x-y|^{\iota}.
\end{eqnarray*}
We finish the proof.
\end{proof}

\end{document}